\title{Deformation Quantization of Surjective Submersions\\ and Principal Fibre Bundles}
\author{\textbf{Martin Bordemann}\thanks{Martin.Bordemann@uha.fr},
  \\[0.1cm]
  Laboratoire de Math{\'e}matiques\\
  Universit{\'e} de Haute-Alsace Mulhouse \\
  4, Rue des Fr{\`e}res Lumi{\`e}re \\
  F.68093 Mulhouse \\
  France\\[0.2cm]
  \textbf{and}
  \\[0.2cm]
  \addtocounter{footnote}{1}
  \textbf{Nikolai Neumaier}\thanks{Nikolai.Neumaier@physik.uni-freiburg.de},
  \textbf{Stefan Waldmann}\thanks{Stefan.Waldmann@physik.uni-freiburg.de},
  \textbf{Stefan Weiss}\thanks{Stefan.Weiss@physik.uni-freiburg.de}
  \\[0.1cm]
  Fakult{\"a}t f{\"u}r Mathematik und Physik\\
  Albert-Ludwigs-Universit{\"a}t Freiburg\\
  Physikalisches Institut\\
  Hermann Herder Stra{\ss}e 3\\
  D 79104 Freiburg\\
  Germany
  }
\date{November 2007\\[1cm]
  To the memory of Julius Wess
}
\renewcommand{\mathbb}[1]{\mathbbm{#1}} 
\numberwithin{equation}{section}
\newcounter{comment}
\newtheorem{lemma}{Lemma}[section]
\newtheorem{proposition}[lemma]{Proposition}
\newtheorem{theorem}[lemma]{Theorem}
\newtheorem{corollary}[lemma]{Corollary}
\newtheorem{definition}[lemma]{Definition}
\newtheorem{example}[lemma]{Example}
\newtheorem{remark}[lemma]{Remark}
\newenvironment{proof}[1][{}]{\par\noindent\textsc{Proof{#1}: }}{\hspace*{\fill}$\blacksquare$\smallskip\noindent\par}
\newcommand{\id}         {\operatorname{\mathsf{id}}}
\newcommand{\Lie}        {\operatorname{\mathscr{L}\!}}    
\newcommand{\supp}       {\operatorname{\mathrm{supp}}}
\newcommand{\Hom}        {\operatorname{\mathsf{Hom}}}   
\newcommand{\End}        {\operatorname{\mathsf{End}}}
\newcommand{\I}          {\mathrm{i}}
\newcommand{\E}          {\mathrm{e}}
\newcommand{\D}          {\operatorname{\mathrm{d}}}
\newcommand{\lie}[1]     {\mathfrak{#1}}
\newcommand{\Anti}       {\Lambda}
\newcommand{\ins}        {\operatorname{\mathrm{i}}}
\newcommand{\HdR}        {\mathrm{H}_{\scriptscriptstyle\mathrm{dR}}}
\newcommand{\starp}     {\mathbin{\star'}}
\newcommand{\bulletp}   {\mathbin{\bullet'}}
\newcommand{\Def}      {\mathrm{Def}}
\newcommand{\HC}       {\mathrm{HC}}
\newcommand{\HCtype}   {\mathrm{HC}_{\scriptstyle{\mathrm{type}}}}
\newcommand{\HCdiff}   {\mathrm{HC}_{\scriptstyle{\mathrm{diff}}}}
\newcommand{\HHtype}   {\mathrm{HH}_{\scriptstyle{\mathrm{type}}}}
\newcommand{\Diffop}   {\operatorname{\mathrm{DiffOp}}}
\newcommand{\Diffopver}{\operatorname{\mathrm{DiffOp}_{\mathrm{ver}}}}
\newcommand{\Union}    {\operatorname*{\mbox{$\bigcup$}}}
\newcommand{\bullett}   {\mathbin{\tilde{\bullet}}}
\begin{document}

\maketitle

\begin{abstract}
    In this paper we establish a notion of deformation quantization of
    a surjective submersion which is specialized further to the case of
    a principal fibre bundle: the functions on the total space are
    deformed into a right module for the star product algebra of the
    functions on the base manifold. In case of a principal fibre
    bundle we require in addition invariance under the principal
    action. We prove existence and uniqueness of such deformations.
    The commutant within all differential operators on the total space
    is computed and gives a deformation of the algebra of vertical
    differential operators. Applications to noncommutative gauge field
    theories and phase space reduction of star products are discussed.
\end{abstract}

\newpage

\tableofcontents

%
%

\section{Introduction}
\label{sec:Intro}

Principal fibre bundles and surjective submersions are omnipresent in
differential geometry. To name just a few instances: any vector bundle
is an associated bundle to its frame bundle, proper and free Lie group
actions are principal fibre bundles, and physical gauge theories are
formulated using principal fibre bundles as starting point. The
projection map from the total space of the principal bundle to the
base space is an example of a surjective submersion. Among many
others, an interesting example of a surjective submersion in phase
space reduction in geometrical mechanics is the projection map from
the momentum level surface in a Poisson manifold onto the reduced
phase space whenever one is in the regular situation.

On the other hand, deformation quantization \cite{bayen.et.al:1978a}
has reached great popularity in various applications in mathematical
physics, not only involving the original intention of understanding
the quantum theory of a classical mechanical system with phase space
modelled by a Poisson manifold. Even though this original motivation
is still one of the major issues in deformation quantization, more
recent applications involve the so-called noncommutative space-times,
see e.g.\ \cite{doplicher.fredenhagen.roberts:1995a} .  Here the
space-time manifold is endowed with a noncommutative deformation, the
star product, which is seen as an effective theory of some still
unknown quantum gravity. Then on such a noncommutative space-time
(quantum) field theories are established and studied intensely. Of
particular interest are of course again the gauge theories.

Having these applications in mind it is natural to ask how one can
define and construct deformation quantizations of principal bundles
and, more generally, of surjective submersions.

In the following we will exclusively work with \emph{formal}
deformation quantization: recall that a formal star product $\star$ on
a manifold $M$ is an associative $\mathbb{C}[[\lambda]]$-bilinear
product for the formal power series $C^\infty(M)[[\lambda]]$ in
$\lambda$ with values in the smooth complex-valued functions
$C^\infty(M)$ such that
\begin{equation}
    \label{eq:StarProduct}
    f \star g = \sum_{r=0}^\infty \lambda^r C_r(f, g),
\end{equation}
$C_0(f, g) = fg$ is the pointwise product and $1 \star f = f = f \star
1$. We only consider differential star products where $C_r$ is a
bidifferential operator for all $r$.  It follows from associativity
that $\{f, g\} = \frac{1}{\I} (C_1(f, g) - C_1(g, f))$ is a Poisson
bracket on $M$. Conversely, any Poisson bracket can be deformed
(quantized) into a star product, this is a consequence of Kontsevich's
famous formality theorem \cite{kontsevich:1997:pre, kontsevich:2003a}.
For an elementary introduction to Poisson geometry and deformation
quantization see e.g.\  \cite{waldmann:2007a}.

In order to find a reasonable definition of a deformation quantization
of a principal fibre bundle one can rely on several other approaches,
some of which we shall recall now:

In many approaches to gauge theories on noncommutative space-times one
can read between the lines and possibly extract a good definition of a
deformation quantization of a principal fibre bundle. However, it is
not completely obvious as here either only local aspects are
discussed, typically the Weyl-Moyal star product on a symplectic
vector space, or only particular structure groups like $\mathrm{Gl}(n,
\mathbb{C})$ or $\mathrm{U}(n)$. Here in particular the works of
Jur\v{c}o, Schupp, Wess and co-workers, see e.g.
\cite{jurco.schupp.wess:2002a, jurco.schupp.wess:2001a,
  jurco.schraml.schupp.wess:2000a, jurco.schupp.wess:2000a,
  jurco.schupp:2000a}, are discussed and developed further in the
physics literature. They seem to give promising models for gauge
theories on noncommutative space-times, see also
\cite{waldmann:2007b:pre} for a review on the geometric nature of such
noncommutative field theories.

In \cite{bursztyn.waldmann:2000b, bursztyn:2001a, bursztyn:2002a,
  waldmann:2002a, waldmann:2001b} the deformation quantization of
vector bundles was discussed in detail, where indeed deformed
transition functions were found: this indicates a deformation
quantization of the corresponding frame bundle. However, a global
description of the deformed frame bundle is still missing and
structure groups beyond the general linear group do not seem to be
accessible by this approach. Also in \cite{hawkins:2000a,
  hawkins:1999a} the deformation theory of vector bundles in the
context of strict deformation quantizations is discussed.

Furthermore, as quantum analogue of principal bundles the so-called
\emph{Hopf-Galois extensions} are studied in detail: here the total
space $P$ is replaced by some algebra $\mathcal{P}$, the structure Lie
group $G$ is replaced by a Hopf algebra $H$ acting (or co-acting) on
$\mathcal{P}$ and the base space $M$ is replaced by the sub-algebra
$\mathcal{M}$ of $H$-invariant elements in $\mathcal{P}$ together with
some additional, more technical properties encoding the freeness and
properness of the action, see e.g.\ \cite{brzezinski.majid:1993a} and
\cite{hajac.matthes.szymanski:2003a, dabrowski.grosse.hajac:2001a} for
recent developments. Even though this is a very successful approach
mainly used in a $C^*$-algebraic formulation, there are simple
examples in deformation quantization where this does not seem
appropriate. Taking the idea of Hopf-Galois extensions literally would
lead to the following definition of a deformation quantization of a
principal fibre bundle $\mathsf{p}: P \longrightarrow M$. Given a star
product $\star$ on $M$ one should try to find a star product $\star_P$
on $P$ such that the pull-back $\mathsf{p}^*$ can be deformed into an
algebra homomorphism. Then of course, the Hopf algebra deformation and
the further technical requirements still have to be found and
satisfied. However, already for the first step one finds hard
obstructions:
\begin{example}
    \label{example:NoHopfGalois}
    Let $\mathsf{p}: P \longrightarrow M$ be a surjective submersion
    and $\star$ a star product on $M$ quantizing a Poisson structure
    $\pi_M \in \Gamma^\infty(\Anti^2 TM)$. Assume that there exists a
    star product $\star_P$ on $P$ such that $\mathsf{p}^*$ allows for
    a deformation into an algebra homomorphism
    \begin{equation}
        \label{eq:Deformpobenstern}
        \mathsf{p}^* + \cdots: 
        (C^\infty(M)[[\lambda]], \star) \longrightarrow
        (C^\infty(P)[[\lambda]], \star_P).
    \end{equation}
    Let $\pi_P$ denote the Poisson structure on $P$ determined by
    $\star_P$. Then by a simple computation, $\mathsf{p}: (P, \pi_P)
    \longrightarrow (M, \pi_M)$ is a Poisson map.
\end{example}
Since in this situation already the lowest orders have to satisfy a
condition, one has to expect obstructions. Indeed, this happens
already in the following simple example:
\begin{example}
    \label{example:HopfFibration}
    Consider the Hopf fibration $\mathsf{p}: S^3 \longrightarrow S^2$,
    which is a $S^1$-principal bundle, and a \emph{symplectic} Poisson
    structure $\pi_{S^2}$ on the $2$-sphere $S^2$. Then there is no
    Poisson structure on $S^3$ making $\mathsf{p}$ a Poisson map.
    Indeed, it is easy to see that the symplectic leaves of $(S^3,
    \pi_{S^3})$ have to be $2$-dimensional and the restriction of
    $\mathsf{p}$ to one leaf is still surjective. Thus the leaf covers
    $S^2$ whence it is diffeomorphic to $S^2$ via $\mathsf{p}$.  But
    this immediately gives a global section of the non-trivial
    principal bundle $\mathsf{p}: S^3 \longrightarrow S^2$, a
    contradiction. Of course this relies very much on the fact that we
    have chosen a symplectic Poisson structure on $S^2$. There are
    examples of Hopf-Galois extensions deforming the Hopf fibration
    where (necessarily) the Poisson structure on $S^2$ is not
    symplectic.
\end{example}
We will come back to this example in
Section~\ref{sec:AssociatedBundles} and study the above obstruction
from a more sophisticated point of view: We will see that also a
deformation of $\mathsf{p}^*$ into a \emph{bimodule} structure will
result in hard obstructions.

Before we give the final definition, we recall the third motivation
coming from deformation quantization itself: consider a big phase
space $\tilde{M}$ with a coisotropic submanifold $\iota: P
\longrightarrow \tilde{M}$. If the characteristic foliation of $P$ is
well-behaved enough, then the leaf space $M = P \big/ \sim$ itself is
a manifold such that the projection $\mathsf{p}: P \longrightarrow M$
is a surjective submersion. It is a well-known fact that $M$ inherits
a Poisson structure from $\tilde{M}$. A particular case is obtained if
$\tilde{M}$ is equipped with a Hamiltonian group action of some Lie
group $G$ with equivariant momentum map $J: \tilde{M} \longrightarrow
\lie{g}^*$ and $P = J^{-1}(0)$ is the momentum level zero surface. In
this case $G$ acts also on $P$ and $M$ is the quotient $P \big/ G$. In
the free and proper case we have a good reduced phase space $M$ and
$P$ is a $G$-principal bundle over $M$. This situation is the famous
Marsden-Weinstein reduction, see
e.g.\  \cite{abraham.marsden:1985a}, Sect.~4.3.

When it comes to deformation quantization of this picture, one wants
to find a star product $\tilde{\star}$ on $\tilde{M}$ which is
compatible with the `constraint surface' $P$ in such a way, that one
can construct a star product $\star$ on $M$ out of $\tilde{\star}$.
Several options for this have been discussed like the BRST formalism
\cite{bordemann.herbig.waldmann:2000a ,bordemann:2000a, herbig:2006a}
or more ad hoc constructions as in \cite{gloessner:1998a:pre,
  bordemann.brischle.emmrich.waldmann:1996a,
  bordemann.brischle.emmrich.waldmann:1996b}. More recently, it became
clear that a deformation of the functions on $P$ as a \emph{bimodule}
for the deformed algebras of functions on $\tilde{M}$ and $M$,
respectively, leads to a very satisfying picture: one should try to
define a left module structure on $C^\infty(P)[[\lambda]]$ with
respect to $\tilde{\star}$ deforming the canonical one coming from
$\iota^*$ in such a way that the module endomorphisms are isomorphic
to $C^\infty(M)[[\lambda]]$ thereby inducing a star product $\star$ on
$M$. This point of view has been promoted in \cite{bordemann:2005a,
  bordemann:2004a:pre} where first results for the general symplectic
case have been obtained. For the more general Poisson case, see
\cite{cattaneo.felder:2005a:pre, cattaneo.felder:2004a,
  cattaneo:2004a}, where sufficient conditions for a successful
reduction where formulated.

Taking this last motivation into account it is clear that a
deformation of $C^\infty(P)$ into an algebra does not seem
to be appropriate at all: geometrically a deformed product would
result in a Poisson structure in first order, but in phase space
reduction there is \emph{no} Poisson structure on the constraint
surface $P$, only on $\tilde{M}$ and on $M$. In fact, on $P$ one
always has a Dirac structure, see e.g.\ the discussion in
\cite{courant:1990a, cattaneo.zambon:2007a}. Thus a deformation into a
right module seems to be more adapted to the reduction picture. In
fact, this will be our final definition:
\begin{definition}[Deformation quantization of surjective submersions]
    \label{definition:def-quant-surj-sub}
    Let $\mathsf{p}: P \longrightarrow M$ be a surjective submersion
    and $\star$ be a star product on $M$.
    \begin{enumerate}
    \item A deformation quantization of the surjective submersion is a
        $(C^\infty(M)[[\lambda]],\star)$-right module
        structure $\bullet$ of $C^\infty(P)[[\lambda]]$,
        such that
        \begin{equation}
            \label{eq:deformation-surjective-submersion}
            f \bullet a 
            = f \cdot \mathsf{p}^*a 
            + \sum_{r=1}^\infty \lambda^r \rho_r(f,a)
        \end{equation}
        for $f\in C^\infty(P)[[\lambda]]$ and $a \in
        C^\infty(M)[[\lambda]]$ with bidifferential
        operators $\rho_r$.
    \item Two such deformations $\bullet$ and $\tilde{\bullet}$ are
        called equivalent if and only if there exists a formal series
        $T = \id_{C^\infty(P)} + \sum_{r=1}^\infty \lambda^r T_r$ of
        differential operators $T_r \in \Diffop(C^\infty(P))$ such
        that for all $f \in C^\infty(P)[[\lambda]]$ and $a \in
        C^\infty(M)[[\lambda]]$
        \begin{equation}
            \label{eq:equivalence-surj-sub}
            T(f \bullet a)=
            T(f) \mathbin{\tilde{\bullet}} a.
        \end{equation}
    \item A deformation quantization $\bullet$ is said to preserve the
        fibration if
        \begin{equation}
            \label{eq:preserve-fibration-surj-sub}
            (\mathsf{p}^*a) \bullet b
            = \mathsf{p}^* (a \star b).
        \end{equation}
    \end{enumerate}
\end{definition}
Preserving the fibration will be a technical but yet convenient
condition to impose. Clearly, it is equivalent to the condition $1
\bullet a = \mathsf{p}^*a$ by the right module property. As usual in
deformation quantization we require the maps $\rho_r$ to be
bidifferential operators as already $\star$ is always assumed to be a
differential star product. One can show easily that for any right
module structure one necessarily has $f \bullet 1 = f$ since $1 \in
C^\infty(M)$ is still the unit element with respect to $\star$.  From
this definition it is easy to motivate the more specific situation of
a principal fibre bundle:
\begin{definition}[Deformation quantization of principal fibre bundles]
    \label{definition:def-quant-principal}
    Let $\mathsf{p}:P\longrightarrow M$ be a principal fibre bundle
    with structure group $G$ and principal right action $\mathsf{r}: P
    \times G \longrightarrow P$, $\mathsf{r}_g(u) = \mathsf{r}(u,g)$,
    and $\star$ a star product on $M$.
    \begin{enumerate}
    \item A deformation quantization of the principal fibre bundle is
        a $G$-invariant deformation quantization of the surjective
        submersion $\mathsf{p}: P \longrightarrow M$ with respect to
        $\star$, i.e.\  a right module structure $\bullet$ as in
        \eqref{eq:deformation-surjective-submersion} with the additional
        property
        \begin{equation}
            \label{eq:deformation-right-module-principal}
            \mathsf{r}_g^* (f \bullet a)
            = (\mathsf{r}_g^*f) \bullet a
        \end{equation}
        for all $f \in C^\infty(P)[[\lambda]]$, $a \in
        C^\infty(M)[[\lambda]]$, and $g \in G$.
    \item Two such deformations $\bullet$ and $\tilde{\bullet}$ are
        called equivalent if they are equivalent in the sense of
        definition \ref{definition:def-quant-surj-sub} with
        $G$-invariant operators $T_r$, i.e.\  in addition to
        \eqref{eq:deformation-right-module-principal} one has for all
        $g \in G$
        \begin{equation}
            \label{equivalence-principal-G-invariance}
            \mathsf{r}_g^* \circ T_r
            = T_r \circ \mathsf{r}_g^*.
        \end{equation}
    \end{enumerate}
\end{definition}

The main goal of the present paper is to prove the following two
theorems about existence and uniqueness of differential deformations
of surjective submersions and principal fibre bundles:
\begin{theorem}
    \label{theorem:def-quant-surj-sub}
    Every surjective submersion $\mathsf{p}: P \longrightarrow M$ with a
    star product $\star$ on $M$ admits a deformation quantization
    which is unique up to equivalence. Moreover, one can achieve a
    deformation which respects the fibration.
\end{theorem}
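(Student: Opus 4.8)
The plan is to construct the right module structure $\bullet$ order by order in $\lambda$ and to organize the obstructions using Hochschild cohomology. First I would pass to a local picture: since $\mathsf{p}$ is a surjective submersion, every point of $P$ has an open neighbourhood over which $\mathsf{p}$ looks like the projection $U \times V \longrightarrow V$ with $V \subseteq M$ open. Over such a neighbourhood one has an obvious right module structure, namely $f \bullet_{\mathrm{loc}} a := \mathsf{p}^*a \star_V f$ read through the identification $C^\infty(U\times V) \cong C^\infty(U) \hat\otimes C^\infty(V)$ — concretely, $\bullet_{\mathrm{loc}}$ is just $\star$ acting on the second tensor factor. This is manifestly associative, differential, and respects the fibration. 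The real content is to patch these local module structures together, or equivalently to run the deformation-theoretic induction globally.

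For the inductive step, suppose $\bullet = \sum_{r=0}^{k}\lambda^r\rho_r$ (with $\rho_0(f,a)=f\cdot\mathsf{p}^*a$) is associative modulo $\lambda^{k+1}$ and respects the fibration to that order. Write the order-$\lambda^{k+1}$ defect of the associativity law $(f\bullet a)\bullet b = f\bullet(a\star b)$ as a trilinear map built from the lower-order $\rho_r$ and $C_r$; a standard computation shows this defect is a Hochschild-type cocycle, and the obstruction to extending by one more order $\rho_{k+1}$ is its class in an appropriate differential Hochschild cohomology group. Here the relevant complex is that of the "module" situation: cochains are differential operators $C^\infty(P)\otimes C^\infty(M)^{\otimes n}\to C^\infty(P)$, with differential combining the module action $\rho_0$ and the pointwise product on $M$; because the base factor carries $\star$, this is the Hochschild complex of $C^\infty(M)$ with coefficients in the bimodule $C^\infty(P)$ (acting on the left by multiplication via $\mathsf{p}^*$, trivially relevant on the right), in the differential/local category. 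The key vanishing input is that, $\mathsf{p}$ being a submersion, this cohomology is computed fibrewise and the obstruction groups $\HH^{k}_{\mathrm{diff}}$ in the relevant degrees vanish — essentially a relative Hochschild--Kostant--Rosenberg statement, reducing to the acyclicity already implicit in the existence of $\bullet_{\mathrm{loc}}$. One then chooses $\rho_{k+1}$ to kill the cocycle, and adjusts it by a coboundary if necessary to keep the fibration preserved (the condition $1\bullet a=\mathsf{p}^*a$ is linear in each order and cuts down the choice without creating a new obstruction). A partition of unity subordinate to submersion charts glues the local solutions, the patching discrepancies being coboundaries that can be absorbed.

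Uniqueness up to equivalence follows the same scheme with the obstruction now living one degree lower: given two deformations $\bullet$, $\tilde\bullet$ agreeing modulo $\lambda^{k}$, the order-$\lambda^{k}$ difference is a Hochschild $1$-cocycle in the same complex, and the relevant $\HH^{1}_{\mathrm{diff}}$ vanishes for a submersion, so this cocycle is a coboundary, yielding the next term $T_k$ of the intertwining operator $T=\id+\sum_{r\ge1}\lambda^rT_r$. Assembling the $T_k$ gives the equivalence.

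The main obstacle is the cohomological input: one must show the pertinent differential Hochschild cohomology of $C^\infty(M)$ with values in $C^\infty(P)$ vanishes in the degrees that matter. The honest way is not to invoke a black-box HKR theorem but to exploit the local product structure directly — construct an explicit homotopy for the differential Hochschild complex in a submersion chart (differentiating along the fibre directions gives a contracting homotopy onto the subcomplex where everything factors through $\mathsf{p}^*$), then globalize via a partition of unity, checking that the homotopy can be made to respect supports so the gluing is legitimate. Handling this globalization carefully, together with the bookkeeping that keeps "preserves the fibration" intact at every stage, is where the work lies; the $G$-invariant refinement needed for the principal-bundle case will then require only that every choice above be made $G$-equivariantly, which is possible by averaging since $G$ acts properly.
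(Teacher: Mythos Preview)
Your overall architecture matches the paper's: reduce existence and uniqueness to the vanishing of $\HHdiff^2$ and $\HHdiff^1$ respectively (exactly as in Lemmas~\ref{lemma:DeformationObstruction} and~\ref{lemma:ObstructionForEquivalence}), compute these locally in a submersion chart, and globalize by a partition of unity. That much is right and is precisely what the paper does.

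The gap is in the cohomological input, which you correctly flag as ``where the work lies'' but then describe incorrectly. First, the coefficient module is not $C^\infty(P)$ but $\Diffop(P)$: your cochains $C^\infty(P)\otimes C^\infty(M)^{\otimes n}\to C^\infty(P)$ are, after currying the first slot, elements of $\HCdiff^n(C^\infty(M),\Diffop(P))$ with the bimodule structure of Example~\ref{example:DifferentialDeformations}. With coefficients in the symmetric bimodule $C^\infty(P)$ the cohomology does \emph{not} vanish in positive degree (a relative HKR would give pulled-back polyvector fields, not zero), so this misidentification matters. Second, your proposed homotopy --- ``differentiating along the fibre directions'' --- does not reduce the number of $C^\infty(M)$-inputs and hence cannot be a contracting homotopy for this complex. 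The paper's mechanism is quite different: it passes from the bar to the Koszul resolution of $C^\infty(V)$ as a $C^\infty(V\times V)$-module via explicit chain maps $F^\bullet$, $G^\bullet$, and then builds a homotopy $\delta_K^{-1}$ (Definition~\ref{definition:homotopy-maps-Koszul}) which absorbs one Koszul form-index into a \emph{base}-direction derivative $\partial/\partial x^j$ inside $\Diffop(V\times G)$. The resulting Theorem~\ref{theorem:local-Hochschild-cohomologies-surjective-submersions} gives the local vanishing with explicit control on orders of differentiation, and Theorem~\ref{theorem:Hochschild-cohomologies-surjective-submersions} globalizes it.

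One smaller point: the paper does not maintain the fibration-preserving condition inductively. It first builds any $\bullet$, then constructs a single equivalence $T$ from a horizontal lift and a vertical-respecting connection (Lemma~\ref{lemma:special-diffop}) so that the transformed module satisfies $1\mathbin{\tilde\bullet} a=\mathsf{p}^*a$. Your inductive approach to this condition is plausible but would need its own argument; the paper's a posteriori correction is cleaner.
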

\begin{theorem}
    \label{theorem:def-quant-principal}
    Every principal fibre bundle $\mathsf{p}: P \longrightarrow M$
    with a star product $\star$ on $M$ admits a deformation
    quantization which is unique up to equivalence. Again, one can
    achieve a deformation which respects the fibration.
\end{theorem}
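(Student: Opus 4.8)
The plan is to re-examine the construction behind Theorem~\ref{theorem:def-quant-surj-sub} and carry it out $G$-equivariantly from start to finish; the crucial observation that makes this possible \emph{without} any compactness assumption on $G$ is that all the auxiliary data entering the construction — a trivializing cover, the associated local models, and the partition of unity used for gluing — can be chosen on the base $M$, so that their pull-backs to $P$ are automatically invariant under the $\mathsf{r}_g^*$. In particular, no averaging over $G$ is needed.

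First I would fix an open cover $\{U_i\}$ of $M$ over which $\mathsf{p}$ is trivial, $\mathsf{p}^{-1}(U_i)\cong U_i\times G$ $G$-equivariantly, with $G$ acting by right translations on the second factor. On each $\mathsf{p}^{-1}(U_i)$ the naive prolongation
\begin{equation*}
  f\bullet_i a := f\cdot\mathsf{p}^*a + \sum_{r=1}^\infty\lambda^r\, C_r^{(i)}(f,\mathsf{p}^*a),
\end{equation*}
where $C_r^{(i)}$ is the bidifferential operator $C_r$ prolonged to $U_i\times G$ by letting it differentiate only in the $U_i$-directions, is a deformation quantization of $\mathsf{p}|_{U_i}$ which respects the fibration; since the $C_r^{(i)}$ differentiate only along the base they commute with the $\mathsf{r}_g^*$ and kill the constant $1$ in the first slot, so each $\bullet_i$ is $G$-invariant and fibration-respecting. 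These are the $G$-invariant local models.

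Next I would set up the usual order-by-order construction of a global $\bullet=\sum_r\lambda^r\rho_r$. Assuming inductively a $G$-invariant, fibration-respecting $\bullet^{(n)}$ solving the right-module axiom modulo $\lambda^{n+1}$, the obstruction to one more step is a globally defined $G$-invariant cochain in the differential Hochschild complex $C^\bullet_{\mathrm{diff}}\!\big(C^\infty(M),\,\End_{\mathrm{diff}}(C^\infty(P))\big)$, where $\End_{\mathrm{diff}}(C^\infty(P))$ carries the $C^\infty(M)$-bimodule structure given by pre- and post-composition with multiplication by pull-backs; as usual this cochain is a cocycle of degree two, and the freedom in an equivalence transformation at a given order is a cocycle of degree one. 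Everything thus reduces to the acyclicity in positive degrees of this complex, and more precisely of its subcomplex of $G$-invariant, fibration-respecting cochains. Over a trivializing chart the prolonged star product provides an explicit contracting homotopy of Hochschild--Kostant--Rosenberg type that only differentiates and integrates along $M$, hence preserves both $G$-invariance and the fibration-respecting constraint; and a partition of unity $\{\chi_i\}$ subordinate to $\{U_i\}$ \emph{on the base} has $G$-invariant pull-backs $\mathsf{p}^*\chi_i$, so the standard patching of local primitives produces $G$-invariant global primitives. Feeding this back into the recursion yields a $G$-invariant, fibration-respecting $\bullet$ (existence); applying the same argument to the difference of two such deformations yields a $G$-invariant equivalence (uniqueness).

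The genuinely delicate point is the last one: one must check that the local Hochschild homotopies can indeed be chosen $G$-equivariantly and compatibly with the constraint $1\bullet a=\mathsf{p}^*a$, and that the partition-of-unity patching — which is exactly where local acyclicity is upgraded to global acyclicity — does not spoil these properties, i.e.\ that the complex of $G$-invariant differential cochains is a fine enough resolution for the argument to go through. Once this is in place, the remainder is the bookkeeping already established for Theorem~\ref{theorem:def-quant-surj-sub}.
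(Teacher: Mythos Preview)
Your strategy for existence and uniqueness is essentially the paper's: the vanishing of $\HHdiff^k(M,\Diffop(P)^G)$ for $k\ge 1$ is proved exactly as you outline, by observing that the explicit local homotopy $\delta_K^{-1}$ of Section~\ref{sec:SurjectiveSubmersions} involves only the horizontal partials $\partial/\partial x^j$ (hence commutes with $\mathsf{r}_g^*$), and that the gluing uses a partition of unity $\{\chi_\alpha\}$ on $M$ whose pull-backs $\mathsf{p}^*\chi_\alpha$ are $G$-invariant. Your emphasis that no averaging over $G$ is needed is precisely the point.

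For the fibration-respecting statement, however, the paper takes a different route than the one you sketch. Rather than carrying the constraint $1\bullet a=\mathsf{p}^*a$ through the induction (which would require checking that the local homotopies preserve an appropriate normalization, a point you flag as delicate but do not resolve), the paper first produces \emph{some} $G$-invariant $\bullet$, and then constructs a $G$-invariant equivalence $T=\id+\sum_r\lambda^rT_r$ with $T(\mathsf{p}^*a)=1\bullet a$ by extending the differential operators $a\mapsto\rho_r(1,a)$ to all of $C^\infty(P)$ via the symbol calculus for a $G$-invariant torsion-free covariant derivative $\nabla^P$ that respects the vertical bundle. The transformed deformation $f\mathbin{\tilde\bullet}a=T^{-1}(Tf\bullet a)$ is then automatically fibration-respecting. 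Your direct approach may well work, but the normalized-cochain argument would need to be spelled out; the paper's a posteriori correction sidesteps this entirely at the cost of invoking the existence of such a $\nabla^P$.
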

The proof of both theorems relies on an order by order construction of
the deformed module structures which is possible since we are able to
show that the relevant Hochschild cohomologies are trivial. With this
(non-trivial) result on the Hochschild cohomology the remaining proof
is very simple. To show the vanishing of the Hochschild cohomologies
we heavily use techniques developed in
\cite{bordemann.et.al:2005a:pre}. Alternatively, the existence of such
deformations follows also from a Fedosov-like construction as
discussed in detail in \cite{weiss:2006a} for the case where the star
product on $M$ quantizes a \emph{symplectic} Poisson bracket.

The very satisfactory answer to the existence and classification
questions indicates that our definitions for deformation quantization
of surjective submersions and principal fibre bundles are
reasonable. In a next step it remains to answer whether the
definitions are \emph{useful} once we have shown
Theorem~\ref{theorem:def-quant-surj-sub} and
\ref{theorem:def-quant-principal}. Here we have to go back to the
original motivations why a deformation quantization is desirable:
\begin{compactitem}
\item Concerning applications in gauge theories on noncommutative
    space-times we would like to see how one can formulate a global
    and geometric approach to such gauge theories, including in
    particular the notions of associated bundles, connections and
    Yang-Mills actions. Here we have partial answers where we can show
    how the process of associating vector bundles is formulated very
    naturally in our framework. The result will be a deformed vector
    bundle in the sense of \cite{bursztyn.waldmann:2000b} which
    provides the geometric formulation of matter fields in
    noncommutative field theory \cite{waldmann:2001b}. Moreover, the
    action of the infinitesimal gauge transformations can be clarified
    and compared with the approaches in
    \cite{jurco.schraml.schupp.wess:2000a}. To this end, in
    Theorem~\ref{theorem:CooleDeformation}, we compute the
    \emph{commutants} of the right modules obtained by
    Theorem~\ref{theorem:def-quant-surj-sub} and
    \ref{theorem:def-quant-principal} within all differential
    operators which turn out to be deformations of the vertical
    differential operators on the total space.  On the other hand, the
    role of connections still has to be clarified in our geometric and
    global approach.
\item Concerning the relation to the Hopf-Galois extensions we can use
    the results on the commutant to formulate more refined
    obstructions for the existence of a $G$-invariant bimodule
    deformation using results from the Morita theory of star products
    in Corollary~\ref{corollary:NoBimodule}.  Of course, in our
    situation the structure group itself is always the undeformed Lie
    group $G$ and not a Hopf algebra deformation. Clearly, further
    investigations will be necessary to understand the relations
    between these two approaches better.
\item The applications to phase space reduction of star products
    consist in finding hard obstructions: Since the right module
    deformation is unique up to equivalence the commutant is uniquely
    determined, too. In order to carry through the reduction process
    one needs to find a left module structure for the star product
    $\tilde{\star}$ of the big phase space on the functions on $P$,
    i.e.\  an algebra homomorphism into the commutant. Now, for an
    arbitrary choice of the star product $\star$ on $M$ such an
    algebra homomorphism may or may not exist, which gives a necessary
    and also sufficient condition for the reduction. Clearly, this is
    still rather inexplicit and has to be investigated in more detail.
    In particular, we plan to give a comparison with the results in
    \cite{bordemann:2004a:pre, cattaneo.felder:2005a:pre,
      cattaneo.felder:2004a, cattaneo:2004a}.
\end{compactitem}

The paper is organized as follows: In
Section~\ref{sec:AlgebraicPreliminaries} we collect some well-known
facts on algebraic deformation theory in the spirit of Gerstenhaber
and formulate the deformation problem of modules to introduce the
relevant Hochschild cohomologies. Some particular attention is put on
the fact that in the end we need more particular cochains,
bidifferential ones in our case. Section~\ref{sec:homological} recalls
some basic constructions from \cite{bordemann.et.al:2005a:pre} which
are needed to compute the Hochschild cohomologies in the local models.
Here the Koszul and the bar resolutions are recalled and some explicit
homotopies are given. Section~\ref{sec:SurjectiveSubmersions} contains
the proof of Theorem~\ref{theorem:def-quant-surj-sub} using an order
by order construction. We also compute the commutant as a deformation
of the vertical differential operators. In
Section~\ref{sec:PrincipalBundles} we prove
Theorem~\ref{theorem:def-quant-principal} including also a computation
of the commutant and the compatibility of the resulting bimodule
structure with the $G$-action. Finally, in
Section~\ref{sec:AssociatedBundles} we show how a simple tensor
product construction gives the deformation quantization of associated
vector bundles out of our deformation quantization of a principal
fibre bundle. The commutant of the deformed right module structure on
the principal fibre bundle maps onto the commutant of the deformation
quantization of the associated vector bundle.

\medskip
\noindent
\textbf{Acknowledgements:} We would like to thank Henrique Bursztyn,
Alberto Cattaneo, Simone Gutt, Brano Jur\v{c}o, Rainer Matthes, Peter
Schupp, Jim Stasheff, Julius Wess, and Marco Zambon for valuable
discussions and remarks.

%
%

\section{Algebraic Preliminaries}
\label{sec:AlgebraicPreliminaries}

In this section we recall some basic facts on the deformation theory
of algebras and modules. Essentially, all stated definitions and
results are well-known from the very first works of Gerstenhaber
\cite{gerstenhaber:1964a} and e.g.\ \cite{donald.flanigan:1974a}.
However, we will need some explicit expressions for the relevant
cochains whence we present the material in a self-contained way.

Let $\mathbb{K}$ be a field of characteristic $0$ and
$(\mathcal{A},\mu_0)$ an associative $\mathbb{K}$-algebra.
Furthermore, let $\mathcal{E}$ be a vector space over $\mathbb{K}$
with an $\mathcal{A}$-right module structure
\begin{equation}
    \label{eq:undef-right-module-structure}
    \rho_0: \mathcal{E} \times \mathcal{A} \longrightarrow 
    \mathcal{E}.
\end{equation}
We shall be interested in a formal associative deformation of the
algebra multiplication
\begin{equation}
    \label{eq:def-algebra-structure}
    \mu = \sum_{r=0}^\infty \lambda^r \mu_r:
    \mathcal{A}[[\lambda]] \times \mathcal{A}[[\lambda]] 
    \longrightarrow \mathcal{A}[[\lambda]]  
\end{equation}
in the sense of Gerstenhaber \cite{gerstenhaber:1964a} which we assume
to be given. For this given deformation $\mu$ we are looking for a
deformation of the module structure
\eqref{eq:undef-right-module-structure}, again in the framework of
formal series
\begin{equation}
    \label{eq:def-right-module-structure}
    \rho = \sum_{r=0}^\infty \lambda^r \rho_r:
    \mathcal{E}[[\lambda]] \times \mathcal{A}[[\lambda]]
    \longrightarrow \mathcal{E}[[\lambda]]
\end{equation}
such that $\rho$ is a right module structure with respect to $\mu$.
All $\mathbb{K}$-multilinear maps will be extended to
$\mathbb{K}[[\lambda]]$-multilinear maps in the following.

In this purely algebraic framework one can now derive expressions for
the obstructions to construct such deformations order by order in the
deformation parameter analogously to \cite{gerstenhaber:1964a}.
However, we shall need a slightly more specific framework: typically,
the maps $\mu_r$ have additional properties and also the $\rho_r$ are
required to have additional properties like e.g.\ continuity with
respect to some given topology. In order to formalize this we consider
Hochschild cochains of the algebra $\mathcal{A}$ of particular types.
These `types' should satisfy the following conditions which simply
allow to reproduce Gerstenhaber's arguments and computations.
\begin{enumerate}
\item We consider Hochschild cochains of a certain type which we
    denote by $\HCtype^\bullet (\mathcal{A}, \mathcal{A}) \subseteq
    \HC^\bullet(\mathcal{A}, \mathcal{A})$, where
    $\HCtype^\bullet(\mathcal{A}, \mathcal{A})$ is required to be
    closed under the usual insertions $\circ_i$ after the $i$-th
    position and $\HCtype^0(\mathcal{A}, \mathcal{A}) = \mathcal{A}$.
    Moreover, we require $\mu_0 \in \HCtype^2(\mathcal{A},
    \mathcal{A})$.
\end{enumerate}
In particular, it follows that $\HCtype^\bullet(\mathcal{A},
\mathcal{A})$ is a subcomplex of $\HC^\bullet(\mathcal{A},
\mathcal{A})$ with respect to the Hochschild differential $\delta$
corresponding to $\mu_0$. Moreover, $\HCtype^\bullet(\mathcal{A},
\mathcal{A})$ is closed under the $\cup$-product and the Gerstenhaber
bracket $[\cdot, \cdot]$ and the corresponding Hochschild cohomology
$\HHtype^\bullet(\mathcal{A}, \mathcal{A})$ is a Gerstenhaber algebra
itself. We assume in the following that the given deformation $\mu$ of
$\mu_0$ consists of cochains $\mu_r \in \HCtype^2(\mathcal{A},
\mathcal{A})$.

For the deformation problem of the module structure we consider
particular Hochschild cochains in $\HC^\bullet(\mathcal{A},
\End(\mathcal{E}))$, where $\End(\mathcal{E})$ is given the canonical
$(\mathcal{A}, \mathcal{A})$-bimodule structure. In detail, we require
the following:
\begin{enumerate}
    \addtocounter{enumi}{1}
\item $\mathcal{D} \subseteq \End(\mathcal{E})$ is a subalgebra with
    $\id \in \mathcal{D}$.
\item We consider cochains $\HCtype^\bullet(\mathcal{A}, \mathcal{D})$
    with values in the subalgebra $\mathcal{D}$ where `type' has the
    property that for $\phi \in \HCtype^k (\mathcal{A}, \mathcal{D})$
    and $\psi \in \HCtype^l (\mathcal{A}, \mathcal{A})$ we have $\phi
    \circ_i \psi \in \HCtype^{k+l-1} (\mathcal{A}, \mathcal{D})$, where
    as usual for $a_1, \ldots, a_{k+l-1} \in \mathcal{A}$
    \begin{equation}
        \label{eq:phicircipsi}
        (\phi \circ_i \psi)(a_1, \ldots, a_{k+l-1})
        = \phi(a_1, \ldots, a_i, \psi(a_{i+1}, \ldots, a_{i+l}),
        a_{i+l+1}, \ldots, a_{k+l-1}).
    \end{equation}
    Of course we want $\mathcal{D} = \HCtype^0(\mathcal{A},
    \mathcal{D})$.
\item Finally, for $\phi_1 \in \HCtype^k (\mathcal{A}, \mathcal{D})$ and
    $\phi_2 \in \HCtype^l (\mathcal{A}, \mathcal{D})$ we require $\phi_1
    \circ \phi_2 \in \HCtype^{k+l} (\mathcal{A}, \mathcal{D})$ where
    \begin{equation}
        \label{eq:phicircpsi}
        (\phi_1 \circ \phi_2)(a_1, \ldots, a_{k+l})
        =
        \phi_1(a_1, \ldots, a_k) \circ \phi_2(a_{k+1}, \ldots, a_{k+l}).
    \end{equation}
\end{enumerate}
Of course we now require that the undeformed right module structure
$\rho_0$ is a cochain $\rho_0 \in \HCtype^1(\mathcal{A},
\mathcal{D})$. Being a right module structure implies that
$\mathcal{D}$ is a $(\mathcal{A}, \mathcal{A})$-bimodule via
\begin{equation}
    \label{eq:DBimodule}
    a \cdot D \cdot b = \rho_0(b) \circ D \circ \rho_0(a),
\end{equation}
where $a, b \in \mathcal{A}$ and $D \in \mathcal{D}$. This is the
restriction of the canonical bimodule structure of
$\End(\mathcal{E})$. Thus $\HCtype^\bullet(\mathcal{A}, \mathcal{D})$
is a subcomplex of $\HC^\bullet(\mathcal{A}, \End(\mathcal{E}))$
whence we obtain a corresponding Hochschild cohomology denoted by
$\HHtype^\bullet(\mathcal{A}, \mathcal{D})$.

Within this refined framework we want to find a deformation $\rho$ as
in \eqref{eq:def-right-module-structure}, where now all $\rho_r \in
\HCtype^1(\mathcal{A}, \mathcal{D})$. Completely analogously to the
general case one obtains the following lemma:
\begin{lemma}
    \label{lemma:DeformationObstruction}
    Assume that $\rho^{(r)} = \rho_0 + \cdots + \lambda^r \rho_r$ is a
    right module structure with respect to $\mu$ up to order
    $\lambda^r$ with $\rho_s \in \HCtype^1 (\mathcal{A}, \mathcal{D})$
    for all $s = 0, \ldots, r$. Then the condition for $\rho_{r+1} \in
    \HCtype^1 (\mathcal{A}, \mathcal{D})$ to define a right module
    structure $\rho^{(r+1)} = \rho^{(r)} + \lambda^{r+1} \rho_{r+1}$
    up to order $\lambda^{r+1}$ is
    \begin{equation}
        \label{eq:ConditionRhoOrderByOrder}
        \delta \rho_{r+1} = R_r
    \end{equation}
    with $R_r \in \HCtype^2 (\mathcal{A}, \mathcal{D})$ explicitly
    given by
    \begin{equation}
        \label{eq:ObstructionExistence}
        R_r (a, b) 
        =  
        \sum_{s=0}^r \rho_s (\mu_{r+1-s}(a, b))
        - \sum_{s=1}^r \rho_s(b) \circ \rho_{r+1-s}(a).
    \end{equation}
    Moreover, $\delta R_r = 0$ whence the obstruction in order
    $\lambda^{r+1}$ is the class $[R_r] \in \HHtype^2(\mathcal{A},
    \mathcal{D})$.
\end{lemma}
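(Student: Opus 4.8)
The plan is to mimic Gerstenhaber's classical order-by-order analysis, now carried out inside the fixed subcomplex $\HCtype^\bullet(\mathcal{A},\mathcal{D})$. We want $\rho^{(r+1)}=\rho^{(r)}+\lambda^{r+1}\rho_{r+1}$ to satisfy the right module axiom $\rho(\rho(e,a),b)=\rho(e,\mu(a,b))$ modulo $\lambda^{r+2}$. Writing everything out in powers of $\lambda$ and using that $\rho^{(r)}$ is already a module structure up to order $\lambda^r$, the coefficient of $\lambda^{r+1}$ in the module axiom gives, after isolating the terms containing the unknown $\rho_{r+1}$, exactly an equation of the form $\rho_{r+1}(\rho_0(e,a),b)+\rho_0(\rho_{r+1}(e,a),b)-\rho_{r+1}(e,\mu_0(a,b))=R_r(a,b)$ acting on $e$. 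The left-hand side is, by definition of the Hochschild differential $\delta$ attached to the bimodule structure \eqref{eq:DBimodule} and the module structure $\rho_0$, precisely $(\delta\rho_{r+1})(a,b)$, so the equation is \eqref{eq:ConditionRhoOrderByOrder}. Collecting the remaining terms — those built only from $\rho_s$ with $s\le r$ and $\mu_t$ with $t\le r+1$ — yields the explicit formula \eqref{eq:ObstructionExistence}; here one uses the bookkeeping that the $s=0$ and $s=r+1$ contributions to the quadratic sum are exactly the ones that were absorbed into $\delta\rho_{r+1}$, which is why the second sum in \eqref{eq:ObstructionExistence} runs only from $s=1$ to $r$.

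Next I would check that $R_r$ lands in the right space: each summand $\rho_s(\mu_{r+1-s}(a,b))$ is of the form $\phi\circ_i\psi$ with $\phi=\rho_s\in\HCtype^1(\mathcal{A},\mathcal{D})$ and $\psi=\mu_{r+1-s}\in\HCtype^2(\mathcal{A},\mathcal{A})$, hence lies in $\HCtype^2(\mathcal{A},\mathcal{D})$ by condition (iii); each summand $\rho_s(b)\circ\rho_{r+1-s}(a)$ is of the form $\phi_1\circ\phi_2$ with both factors in $\HCtype^1(\mathcal{A},\mathcal{D})$, hence lies in $\HCtype^2(\mathcal{A},\mathcal{D})$ by condition (iv). So $R_r\in\HCtype^2(\mathcal{A},\mathcal{D})$ as claimed. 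Then $\rho_{r+1}$ solving $\delta\rho_{r+1}=R_r$ exists within $\HCtype^1(\mathcal{A},\mathcal{D})$ precisely when $R_r$ is a coboundary in this subcomplex, and since $\delta R_r=0$ (shown below) the obstruction is the class $[R_r]\in\HHtype^2(\mathcal{A},\mathcal{D})$.

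The one genuinely computational point — and the main obstacle — is verifying $\delta R_r=0$. I would do this by the standard trick: introduce, purely formally, a would-be solution by setting $\rho_{r+1}$ to anything and observing that the full associativity/module defect of $\rho^{(r)}+\lambda^{r+1}\rho_{r+1}$ in orders $\le r+1$ is governed by $\delta\rho_{r+1}-R_r$; the cocycle identity $\delta R_r=0$ then follows from the associativity of $\mu$ (i.e. $\delta\mu_{r+1}=$ the Gerstenhaber-square terms of lower $\mu_s$, equivalently $[\mu,\mu]=0$ order by order) together with the inductive hypothesis that $\rho^{(r)}$ is a module structure up to order $r$. Concretely, one expands $(\delta R_r)(a,b,c)$ using \eqref{eq:ObstructionExistence}, substitutes the associativity relations $\sum_{s}\mu_s(\mu_{r+1-s}(a,b),c)=\sum_s\mu_s(a,\mu_{r+1-s}(b,c))$ and the order-$\le r$ module relations $\sum_s\rho_s(\rho_{r-s}(e,a),b)=\sum_s\rho_s(e,\mu_{r-s}(a,b))$ wherever a product of two lower-order terms appears, and watches the telescoping cancellation. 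This is the familiar Gerstenhaber computation and I would not reproduce every term, but it is where all the work sits; conditions (iii) and (iv) are exactly what guarantee that every intermediate expression in this computation stays inside $\HCtype^\bullet(\mathcal{A},\mathcal{D})$, so that the whole argument — and in particular the final obstruction class — is meaningful within the refined framework rather than merely in $\HC^\bullet(\mathcal{A},\End(\mathcal{E}))$.
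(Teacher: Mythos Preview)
Your proposal is correct and follows exactly the approach the paper takes: the paper's own proof is a single sentence noting that the computation is the standard Gerstenhaber one and that the only new point is $R_r\in\HCtype^2(\mathcal{A},\mathcal{D})$, which follows from conditions \textit{ii.)}--\textit{iv.)}. Your write-up simply spells out in detail what the paper leaves implicit.
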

\begin{proof}
    The only new aspect is that $R_r \in \HCtype^2(\mathcal{A},
    \mathcal{D})$ which is clear from the explicit formula and the
    conditions \textit{ii.)}-\textit{iv.)}.
\end{proof}
In particular, if $\HHtype^2(\mathcal{A}, \mathcal{D}) = \{0\}$, an
order by order construction immediately yields the existence of a
deformation $\rho$ of the desired type.

In a next step, we consider two deformed right module structures
$\rho$ and $\tilde{\rho}$ of the given type for the same associative
deformation $\mu$ of $\mathcal{A}$. Then they are called
(cohomologically) equivalent if there exists a formal series
\begin{equation}
    \label{eq:EquivalenceTransformation}
    T = \id + \sum_{r=1}^\infty \lambda^r T_r
    \quad
    \textrm{with}
    \quad
    T_r \in \mathcal{D}
\end{equation}
such that $T$ is a module isomorphism, i.e.\  for all $a \in
\mathcal{A}$
\begin{equation}
    \label{eq:TIsEquivalence}
    T \circ \rho(a) = \tilde{\rho}(a) \circ T.
\end{equation}
Again, the order by order construction of $T$ gives an obstruction in
the Hochschild cohomology:
\begin{lemma}
    \label{lemma:ObstructionForEquivalence}
    Assume that $T^{(r)} = \id + \cdots + \lambda^r T_r$ is an
    equivalence between $\rho$ and $\tilde{\rho}$ up to order
    $\lambda^r$ such that $T_s \in \HCtype^0(\mathcal{A},
    \mathcal{D})$ for $s = 1, \ldots, r$. Then the condition for
    $T_{r+1} \in \HCtype^0(\mathcal{A}, \mathcal{D})$ to define an
    equivalence $T^{(r+1)} = T^{(r)} + \lambda^{r+1} T_{r+1}$ up to
    order $\lambda^{r+1}$ is
    \begin{equation}
        \label{eq:deltaTrplusEins}
        \delta T_{r+1} = E_r
    \end{equation}
    with $E_r \in \HCtype^1(\mathcal{A}, \mathcal{D})$ explicitly
    given by
    \begin{equation}
        \label{eq:ErExplicit}
        E_r(a) = 
        \sum_{s=0}^r
        \left(
            \tilde{\rho}_{r+1-s}(a) \circ T_s 
            - T_s \circ \rho_{r+1-s}(a)
        \right).
    \end{equation}
    Moreover, $\delta E_r = 0$ whence the recursive obstruction in
    order $\lambda^{r+1}$ is the class $[E_r] \in
    \HHtype^1(\mathcal{A}, \mathcal{D})$.
\end{lemma}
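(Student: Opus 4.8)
The plan is to reproduce, for the cochains encoding the equivalence, exactly the order-by-order argument that gave Lemma~\ref{lemma:DeformationObstruction} (and, classically, Gerstenhaber's). First I would substitute the ansatz \eqref{eq:EquivalenceTransformation} into the intertwining relation \eqref{eq:TIsEquivalence} and compare coefficients of $\lambda^{r+1}$. Isolating the two terms that contain $T_{r+1}$ --- namely $T_{r+1}\circ\rho_0(a)$ coming from the left-hand side and $\tilde{\rho}_0(a)\circ T_{r+1}$ coming from the right-hand side --- on one side, and everything else on the other, one is left with
\[
  T_{r+1}\circ\rho_0(a) - \tilde{\rho}_0(a)\circ T_{r+1}
  = \sum_{s=0}^{r}\left(\tilde{\rho}_{r+1-s}(a)\circ T_s - T_s\circ\rho_{r+1-s}(a)\right)
\]
after the reindexing $s\mapsto r+1-s$ in the remaining sums (the upper limit drops from $r+1$ to $r$ precisely because the $T_{r+1}$-terms have been removed and $T_0=\id$). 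Since the two deformations $\rho$ and $\tilde{\rho}$ are deformations of the same undeformed module structure, $\rho_0=\tilde{\rho}_0$, and since the Hochschild coboundary of a $0$-cochain $D\in\mathcal{D}=\HCtype^0(\mathcal{A},\mathcal{D})$ for the bimodule structure \eqref{eq:DBimodule} is $(\delta D)(a)=D\circ\rho_0(a)-\rho_0(a)\circ D$, the left-hand side equals $(\delta T_{r+1})(a)$ and the right-hand side equals $E_r(a)$ as in \eqref{eq:ErExplicit}; this is \eqref{eq:deltaTrplusEins}. That $E_r\in\HCtype^1(\mathcal{A},\mathcal{D})$ is immediate from conditions \textit{ii.)}--\textit{iv.)}: each summand is a $\circ$-composition of a cochain in $\HCtype^1(\mathcal{A},\mathcal{D})$ with the $0$-cochain $T_s\in\HCtype^0(\mathcal{A},\mathcal{D})$, hence again lies in $\HCtype^1(\mathcal{A},\mathcal{D})$, which is a linear subspace.

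For the cocycle statement $\delta E_r=0$ I would avoid a term-by-term expansion and instead work with the truncated intertwining defect
\[
  \Phi(a) := T^{(r)}\circ\rho(a) - \tilde{\rho}(a)\circ T^{(r)},
  \qquad T^{(r)}=\id+\lambda T_1+\cdots+\lambda^r T_r .
\]
By hypothesis $\Phi=O(\lambda^{r+1})$, and a comparison of coefficients shows that the coefficient of $\lambda^{r+1}$ in $\Phi(a)$ is exactly $-E_r(a)$. The essential point is the \emph{exact} (all-order) identity
\[
  \Phi(\mu(a,b)) = \Phi(b)\circ\rho(a) + \tilde{\rho}(b)\circ\Phi(a),
\]
which holds for purely formal reasons: substituting the genuine module relations $\rho(\mu(a,b))=\rho(b)\circ\rho(a)$ and $\tilde{\rho}(\mu(a,b))=\tilde{\rho}(b)\circ\tilde{\rho}(a)$ into $\Phi(\mu(a,b))$ and adding and subtracting the mixed term $\tilde{\rho}(b)\circ T^{(r)}\circ\rho(a)$ reproduces the right-hand side. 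Now I extract the coefficient of $\lambda^{r+1}$: since $\Phi=O(\lambda^{r+1})$, on the left only $\mu_0$ survives, giving $-E_r(\mu_0(a,b))$, while on the right only $\rho_0$ and $\tilde{\rho}_0=\rho_0$ survive the composition with the $\lambda^{r+1}$-part of $\Phi$, giving $-E_r(b)\circ\rho_0(a)-\rho_0(b)\circ E_r(a)$. Equating the two yields $E_r(\mu_0(a,b))=E_r(b)\circ\rho_0(a)+\rho_0(b)\circ E_r(a)$, i.e.\ $(\delta E_r)(a,b)=0$. Consequently $[E_r]\in\HHtype^1(\mathcal{A},\mathcal{D})$ is well defined, and \eqref{eq:deltaTrplusEins} admits a solution $T_{r+1}\in\HCtype^0(\mathcal{A},\mathcal{D})=\mathcal{D}$ if and only if this class vanishes, which is the asserted recursive obstruction.

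There is no genuine obstacle here: it is the module-equivalence analogue of the computation behind Lemma~\ref{lemma:DeformationObstruction}, and the closure properties \textit{ii.)}--\textit{iv.)} were set up precisely so that all cochains produced stay of the prescribed type. The only points that require attention are the index bookkeeping in the substitution $s\mapsto r+1-s$, the systematic use of the common classical limit $\rho_0=\tilde{\rho}_0$ to recognise the $T_{r+1}$-terms as a Hochschild coboundary, and keeping track of which contributions survive when the exact identity for $\Phi$ is restricted to its lowest non-trivial order $\lambda^{r+1}$ --- this last point being exactly why the truncation of $T$ to $T^{(r)}$ does no harm.
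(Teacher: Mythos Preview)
Your proof is correct and follows the same classical Gerstenhaber-type argument that the paper has in mind; the paper's own proof is in fact a single sentence deferring to the purely algebraic computation and only pointing out that $E_r\in\HCtype^1(\mathcal{A},\mathcal{D})$ by the closure properties \textit{ii.)}--\textit{iv.)}. Your explicit verification of $\delta E_r=0$ via the all-order defect identity $\Phi(\mu(a,b))=\Phi(b)\circ\rho(a)+\tilde{\rho}(b)\circ\Phi(a)$ is a clean way to package the standard term-by-term calculation and avoids any index juggling, but it is not a genuinely different route---just a tidier presentation of the same mechanism.
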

\begin{proof}
    Again, the only new thing compared to the purely algebraic
    situation is the simple observation that $E_r \in
    \HCtype^1(\mathcal{A}, \mathcal{D})$.
\end{proof}
Note that even if an obstruction occurs in higher orders, i.e.\ $[E_r]
\ne 0$ in $\HHtype^1(\mathcal{A}, \mathcal{D})$, the two module
deformations still might be equivalent as one is allowed to change the
already found $T_1, \ldots, T_r$. This makes the classification of
equivalences up to all orders very difficult in general. However, if
the first cohomology $\HHtype^1(\mathcal{A}, \mathcal{D}) = \{0\}$ is
trivial, the construction of $T$ can be done recursively and any two
deformations are equivalent.

Now, we consider one example for more specific types of cochains:
\begin{example}[Differential deformations]
    \label{example:DifferentialDeformations}
    Assume that $\mathcal{A}$ is commutative and also $\mathcal{E}$
    carries the additional structure of an associative, commutative
    algebra. Then we consider the (algebraic) differential
    operators
    \begin{equation}
        \label{eq:DiffopE}
        \mathcal{D} =
        \Diffop^\bullet(\mathcal{E})
        = \Union_{l=0}^\infty \Diffop^l (\mathcal{E})
    \end{equation}
    of the algebra $\mathcal{E}$. We assume for the undeformed module
    structure that $\rho_0(a)$ is a differential operator on
    $\mathcal{E}$ of order zero, i.e.\  $\rho_0(a) \in
    \Diffop^0(\mathcal{E})$ for all $a\in \mathcal{A}$. Since
    $\mathcal{A}$ is commutative, any right module is a left module
    and vice versa. For later use it will be convenient to deform
    $\mathcal{E}$ into a right module but use the $(\mathcal{A},
    \mathcal{A})$-bimodule structure
    \begin{equation}
        \label{eq:FunnyBimoduleStructure}
        a \cdot D \cdot b = \rho_0(a) \circ D \circ \rho_0(b)
    \end{equation}
    for the endomorphisms of $\mathcal{E}$, in contrast to
    \eqref{eq:DBimodule}. This will not affect the cohomological
    considerations but simplify some of the explicit formulas.  For
    `type' we choose the multi-differential cochains, i.e.
    \begin{equation}
        \label{eq:HCdiff}
        \HCdiff^k(\mathcal{A}, \mathcal{A})
        = \Union_{L \in \mathbb{N}_0^k} 
        \Diffop^L(\mathcal{A}, \ldots, \mathcal{A}; \mathcal{A}),
    \end{equation}
    where $L = (l_1, \ldots, l_k)$ is the multi-index denoting the
    multi-order of differentiation. Moreover, we consider
    \begin{equation}
        \label{eq:HCdiffDiffopE}
        \HCdiff^k (\mathcal{A}, \Diffop(\mathcal{E}))
        =
        \Union_{L \in \mathbb{N}^k_0} \Union_{l \in \mathbb{N}_0}
        \Diffop^L(\mathcal{A}, \ldots, \mathcal{A}; \Diffop^l(\mathcal{E})),
    \end{equation}
    where we use the \emph{left} module structure induced by $\rho_0$
    to specify multi-differential operators with values in
    $\Diffop^l(\mathcal{E})$ according to
    \eqref{eq:FunnyBimoduleStructure}. With the definition
    \eqref{eq:HCdiffDiffopE} a cochain $\phi \in
    \HCdiff^k(\mathcal{A}, \Diffop(\mathcal{E}))$ has the property
    that for any $a_1, \ldots, a_k \in \mathcal{A}$ the differential
    operator $\phi(a_1, \ldots, a_k)$ has some fixed order $l$
    independent of $a_1, \ldots, a_k$.  It is now easy to verify that
    $\HCdiff^\bullet(\mathcal{A}, \Diffop(\mathcal{E}))$ satisfies all
    requirements \textit{i.)} to \textit{iv.)}. Note that in general
    this is not true for $\Union_{L \in \mathbb{N}^k_0}
    \Diffop^L(\mathcal{A}, \ldots, \mathcal{A};
    \Diffop^\bullet(\mathcal{E}))$.
\end{example}

In the last part of our general considerations we focus on the
situation where some deformation $\rho$ exists (e.g.\ since the second
Hochschild cohomology is trivial) and where the first Hochschild
cohomology $\HHtype^1(\mathcal{A}, \mathcal{D})$ is trivial,
\begin{equation}
    \label{eq:HHtypeEinsNull}
    \HHtype^1(\mathcal{A}, \mathcal{D}) = \{0\}.
\end{equation}
Then we already know that all deformations are equivalent. We shall
now discuss the module endomorphisms of the deformed module. However,
we do not consider general module endomorphisms but only those which
are formal series of operators of the given type, i.e.\  in
$\mathcal{D}[[\lambda]]$. So for the undeformed situation the module
endomorphisms of interest are $\HHtype^0(\mathcal{A}, \mathcal{D}) =
\ker \delta \cap \mathcal{D} \subseteq \mathcal{D}$.  Clearly, they
form a subalgebra of $\mathcal{D}$ such that $\mathcal{E}$ becomes a
$(\HHtype^0(\mathcal{A}, \mathcal{D}), \mathcal{A})$-bimodule by the
very definition of module endomorphisms.

For abbreviation we denote the \emph{commutant} inside
$\mathcal{D}[[\lambda]]$, i.e.\  the module endomorphisms of the
\emph{deformed} module, by
\begin{equation}
    \label{eq:Kommutante}
    \mathcal{K} = 
    \left\{
        A \in \mathcal{D}[[\lambda]] 
        \; \big| \;
        A \circ \rho(a) = \rho(a) \circ A
        \; \textrm{for all} \; a \in \mathcal{A}[[\lambda]]
    \right\}.
\end{equation}
We will now make use of a complementary subspace
$\overline{\HHtype^0(\mathcal{A}, \mathcal{D})} \subseteq \mathcal{D}$
of the undeformed commutant, i.e.\  we choose
$\overline{\HHtype^0(\mathcal{A}, \mathcal{D})}$ such that
\begin{equation}
    \label{eq:Complement}
    \mathcal{D} 
    = \HHtype^0(\mathcal{A}, \mathcal{D}) 
    \oplus 
    \overline{\HHtype^0(\mathcal{A}, \mathcal{D})}
\end{equation}
which is always possible as we work over a field $\mathbb{K}$. Then
the following proposition describes the structure of $\mathcal{K}$:
\begin{proposition}
    \label{proposition:Kommutante}
    Every choice of a complementary subspace
    $\overline{\HHtype^0(\mathcal{A}, \mathcal{D})}$ induces a
    $\mathbb{K}$-linear map
    \begin{equation}
        \label{eq:rhoprime}
        \rho': \HHtype^0(\mathcal{A}, \mathcal{D})[[\lambda]]
        \longrightarrow
        \mathcal{D}[[\lambda]]
    \end{equation}
    of the form $\rho' = \id + \sum_{r=1}^\infty \lambda^r \rho'_r$
    with $\rho'_r: \HHtype^0(\mathcal{A}, \mathcal{D}) \longrightarrow
    \mathcal{D}$ and the following properties:
    \begin{enumerate}
    \item $\rho'_r(A) \in \overline{\HHtype^0(\mathcal{A},
          \mathcal{D})}$ for all $A \in \HHtype^0(\mathcal{A},
        \mathcal{D})$ and $r \ge 1$.
    \item $\rho'$ is a $\mathbb{K}[[\lambda]]$-linear bijection onto
        $\mathcal{K}$.
    \item $\rho'$ induces an associative deformation of the classical
        commutant
        \begin{equation}
            \label{eq:starprimeDef}
            \mu'(A, B)
            = \rho'^{-1}\left(\rho'(A) \circ \rho'(B)\right),
        \end{equation}
        where $A, B \in \HHtype^0(\mathcal{A},
        \mathcal{D})[[\lambda]]$.
    \item $\rho'$ defines a left module structure for the deformed
        algebra $(\HHtype^0(\mathcal{A}, \mathcal{D})[[\lambda]],
        \mu')$ on $\mathcal{E}[[\lambda]]$ such that
        $\mathcal{E}[[\lambda]]$ becomes a bimodule with respect to
        the two deformed algebras.
    \item Different choices of $\overline{\HHtype^0(\mathcal{A},
          \mathcal{D})}$ and $\rho$ yield equivalent deformations of
        $\HHtype^0(\mathcal{A}, \mathcal{D})$.
    \item Suppose in addition that $\HHtype^2(\mathcal{A},
        \mathcal{D}) = \{0\}$. Then we obtain a map
        \begin{equation}
            \label{eq:DefTheorien}
            \Def_{\mathrm{type}}(\mathcal{A}) \longrightarrow
            \Def(\HHtype^0(\mathcal{A}, \mathcal{D})),
        \end{equation}
        where $\Def$ denotes the set of equivalence classes of
        associative deformations.
    \end{enumerate}
\end{proposition}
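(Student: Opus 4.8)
The plan is to observe that producing an element of the commutant $\mathcal{K}$ with a prescribed lowest order is formally the same problem as producing a self-equivalence of $\rho$, so that the recursive machinery behind Lemma~\ref{lemma:ObstructionForEquivalence} applies almost verbatim, the only new ingredient being that at each step the ambiguity in the solution is fixed by choosing its representative in the complement $\overline{\HHtype^0(\mathcal{A}, \mathcal{D})}$. Concretely, fix $A \in \HHtype^0(\mathcal{A}, \mathcal{D}) = \ker\delta \cap \mathcal{D}$ and look for $\rho'(A) = A + \sum_{r \ge 1} \lambda^r \rho'_r(A)$ with $\rho'_r(A) \in \mathcal{D}$ and $\rho'(A) \in \mathcal{K}$, that is $\rho'(A) \circ \rho(a) = \rho(a) \circ \rho'(A)$ for all $a$. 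Collecting the coefficient of $\lambda^n$ and isolating the terms containing $\rho_0$, this condition takes the form $\delta \rho'_n(A) = E_{n-1}$ with $E_{n-1}(a) = \sum_{s=0}^{n-1} \bigl( \rho_{n-s}(a) \circ \rho'_s(A) - \rho'_s(A) \circ \rho_{n-s}(a) \bigr)$, which by conditions~\textit{ii.)}--\textit{iv.)} lies in $\HCtype^1(\mathcal{A}, \mathcal{D})$ and depends only on $\rho'_0(A), \dots, \rho'_{n-1}(A)$; the computation from the proof of Lemma~\ref{lemma:ObstructionForEquivalence} gives $\delta E_{n-1} = 0$. Since $\HHtype^1(\mathcal{A}, \mathcal{D}) = \{0\}$ the equation $\delta \rho'_n(A) = E_{n-1}$ is solvable, the solution being unique modulo $\ker\delta \cap \mathcal{D} = \HHtype^0(\mathcal{A}, \mathcal{D})$, and I fix $\rho'_n(A)$ to be the unique solution in $\overline{\HHtype^0(\mathcal{A}, \mathcal{D})}$. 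This gives property~(i.), and since $E_{n-1}$ depends linearly on $\rho'_0(A), \dots, \rho'_{n-1}(A)$, an induction shows that $\rho' = \id + \sum_{r \ge 1} \lambda^r \rho'_r$ is $\mathbb{K}$-linear, hence extends to a $\mathbb{K}[[\lambda]]$-linear map.

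For~(ii.), $\rho'(A) \in \mathcal{K}$ by construction, and since $\mathcal{K}$ is a $\mathbb{K}[[\lambda]]$-submodule of $\mathcal{D}[[\lambda]]$ the $\mathbb{K}[[\lambda]]$-linear extension still has image in $\mathcal{K}$; injectivity is immediate because $\rho'$ starts with $\id$. Surjectivity onto $\mathcal{K}$ I check order by order: given $B \in \mathcal{K}$ and $\tilde{A}^{(n-1)} \in \HHtype^0(\mathcal{A}, \mathcal{D})[[\lambda]]$ with $\rho'(\tilde{A}^{(n-1)}) \equiv B \pmod{\lambda^n}$, the difference $B - \rho'(\tilde{A}^{(n-1)})$ lies in $\mathcal{K}$ and begins in order $\lambda^n$, so the order-$\lambda^0$ part of the commutant condition forces its leading coefficient $D_n$ to obey $\delta D_n = 0$, that is $D_n \in \HHtype^0(\mathcal{A}, \mathcal{D})$; then $\tilde{A}^{(n)} := \tilde{A}^{(n-1)} + \lambda^n D_n$ improves the approximation by one order, and the resulting limit $\tilde{A} \in \HHtype^0(\mathcal{A}, \mathcal{D})[[\lambda]]$ satisfies $\rho'(\tilde{A}) = B$.

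The remaining parts are then formal consequences. Since $\mathcal{K}$ is a subalgebra of $(\mathcal{D}[[\lambda]], \circ)$, the composition $\rho'(A) \circ \rho'(B)$ lies in $\mathcal{K}$ and $\mu'(A, B) := (\rho')^{-1}\bigl( \rho'(A) \circ \rho'(B) \bigr)$ is well defined; associativity of $\mu'$ follows from that of $\circ$ and bijectivity of $\rho'$, one has $\mu'_0(A, B) = A \circ B$ because $\rho'$ starts with $\id$, and $\rho'(\id) = \id$ by uniqueness, so $\mu'$ is a unital deformation of the subalgebra $\HHtype^0(\mathcal{A}, \mathcal{D}) \subseteq \End(\mathcal{E})$, which is~(iii.). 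Letting $\rho'(A)$ act on $\mathcal{E}[[\lambda]]$ defines a unital left $(\HHtype^0(\mathcal{A}, \mathcal{D})[[\lambda]], \mu')$-module structure by the same computation, and it commutes with the deformed right action $\rho$ precisely because $\rho'(A) \in \mathcal{K}$, giving the bimodule property~(iv.). For~(v.), if $\tilde{\rho}$ is a second deformation and $T = \id + \sum_{r \ge 1} \lambda^r T_r$ an equivalence $\rho \leftrightarrow \tilde{\rho}$ (which exists since $\HHtype^1(\mathcal{A}, \mathcal{D}) = \{0\}$), then conjugation $B \mapsto T \circ B \circ T^{-1}$ is a $\circ$-algebra isomorphism $\mathcal{K} \to \tilde{\mathcal{K}}$ starting with the identity, and composing $\rho'$, this conjugation, and $(\tilde{\rho}')^{-1}$ yields an algebra isomorphism $(\HHtype^0(\mathcal{A}, \mathcal{D})[[\lambda]], \mu') \to (\HHtype^0(\mathcal{A}, \mathcal{D})[[\lambda]], \tilde{\mu}')$ starting with the identity, that is an equivalence of deformations; two different complements for the same $\rho$ are the special case $T = \id$. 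Finally for~(vi.), triviality of $\HHtype^2(\mathcal{A}, \mathcal{D})$ together with Lemma~\ref{lemma:DeformationObstruction} ensures that every $\mu \in \Def_{\mathrm{type}}(\mathcal{A})$ admits a module deformation $\rho$ and hence a $\mu'$; by~(v.) the class of $\mu'$ is independent of all choices, and if $\mu \sim \tilde{\mu}$ through an algebra equivalence $S$ of the required type then $\tilde{\rho}(a) := \rho(S(a))$ is a right $\tilde{\mu}$-module deformation of $\rho_0$ with the \emph{same} commutant $\mathcal{K}$, whence $(\tilde{\rho}')^{-1} \circ \rho'$ is again an equivalence between $\mu'$ and $\tilde{\mu}'$; thus $[\mu] \mapsto [\mu']$ is a well-defined map~\eqref{eq:DefTheorien}.

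The only genuinely non-formal point in this scheme is the recursion in~(i.): one must check that the obstruction cochain $E_{n-1}$ is $\delta$-closed and of type $\HCtype^1(\mathcal{A}, \mathcal{D})$, so that the vanishing $\HHtype^1(\mathcal{A}, \mathcal{D}) = \{0\}$ can be invoked; but this is exactly the Gerstenhaber-style bookkeeping already carried out in Lemmas~\ref{lemma:DeformationObstruction} and~\ref{lemma:ObstructionForEquivalence}, so I anticipate no real difficulty. The only other place demanding a little care is surjectivity in~(ii.): it cannot be settled by matching leading terms alone and must be pushed through all orders, using that $\mathcal{K}$ is a $\mathbb{K}[[\lambda]]$-submodule, as indicated above.
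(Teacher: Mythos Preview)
Your proposal is correct and follows essentially the same approach as the paper: an order-by-order construction of $\rho'(A)$ using the vanishing of $\HHtype^1(\mathcal{A},\mathcal{D})$ and the splitting \eqref{eq:Complement} to fix each correction term uniquely, followed by the inductive surjectivity argument and the formal deduction of parts~(iii.)--(vi.). In fact you spell out several points---the explicit link to the obstruction cochain of Lemma~\ref{lemma:ObstructionForEquivalence}, and in~(vi.) the observation that $\tilde{\rho}(a):=\rho(S(a))$ has the same commutant as $\rho$---that the paper only alludes to, so your write-up is if anything more complete.
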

\begin{proof}
    Let $A \in \HHtype^0(\mathcal{A}, \mathcal{D})$ be given. We
    construct $\rho'(A) \in \mathcal{K}$ recursively order by order.
    Obviously, in zeroth order $A \in \mathcal{K}$. Assume we have
    already constructed correction terms $\rho_1'(A)$, \ldots,
    $\rho_r'(A)$. Then it is easy to see that the error term in order
    $\lambda^{r+1}$ of $A + \lambda^1 \rho_1'(A) + \cdots + \lambda^r
    \rho_r'(A)$ to be in $\mathcal{K}$ is a $\delta$-closed cochain in
    $\HCtype^1(\mathcal{A}, \mathcal{D})$.  Thus it is a coboundary by
    assumption \eqref{eq:HHtypeEinsNull} and the splitting
    \eqref{eq:Complement} allows to choose a unique correction term in
    $\overline{\HHtype^0(\mathcal{A}, \mathcal{D})}$. By induction we
    obtain the first part and the injectivity of the second part.
    Conversely, if an element in the commutant $\mathcal{K}$ is given,
    then its lowest non-vanishing order is in $\HHtype^0(\mathcal{A},
    \mathcal{D})$ which can be `quantized' using $\rho'$. By a simple
    induction we obtain the surjectivity.  The third part is obvious
    as $\mathcal{K}$ is an associative algebra over
    $\mathbb{K}[[\lambda]]$ and $\rho'$ is the identity in zeroth
    order. The fourth part is clear by construction. For the last part
    we observe that $\mathcal{K}$ is independent of the choice of
    $\overline{\HHtype^0(\mathcal{A}, \mathcal{D})}$ whence it follows
    immediately that different choices of
    $\overline{\HHtype^0(\mathcal{A}, \mathcal{D})}$ give equivalent
    deformations.  Moreover, passing to a different module deformation
    $\tilde{\rho}$ we obtain an equivalence between $\rho$ and
    $\tilde{\rho}$ by $\HHtype^1(\mathcal{A}, \mathcal{D}) = \{0\}$.
    Thus, the commutants $\mathcal{K}$ are isomorphic, too, yielding
    an equivalence between $\rho'$ and $\tilde{\rho'}$. The last part
    is clear since for every deformation of $\mathcal{A}$ of the
    specified type we can deform the right module and hence
    $\HHtype^0(\mathcal{A}, \mathcal{D})$ in a unique way up to
    equivalence.
\end{proof}

%
%

\section{The topological bar and Koszul complex of $C^\infty(V)$}
\label{sec:homological}

In this section we recall some results on the homological algebra of
$C^\infty(V)$ with a convex open subset $V \subseteq \mathbb{R}^n$.
The main tools will be the topological bar resolution as well as the
topological Koszul resolution of $C^\infty(V)$. Most of the material
of this section is well-known and can be found in either
\cite{connes:1994a}, Sect.~III.2.$\alpha$, or
\cite{bordemann.et.al:2005a:pre}. Nevertheless, for later use we have
to present the results in some details.

For the following considerations we need the (topological) extended
algebra $\mathcal{A}^e$ which is given by
\begin{equation}
    \label{eq:extended-algebra}
    \mathcal{A}^e=C^\infty(V\times V).
\end{equation}
Note that in a purely algebraic context the extended algebra is just
$\mathcal{A} \otimes \mathcal{A}$ but here we use the completed tensor
product with respect to the canonical Fr\'{e}chet topology of smooth
functions, resulting in \eqref{eq:extended-algebra}.

For the definition of the bar complex we also consider the topological
version. One defines
\begin{equation}
    \label{eq:bar-complex}
    X_0 
    = \mathcal{A}^e=C^\infty(V\times V)
    \quad
    \textrm{and}
    \quad
    X_k = C^\infty(V\times V^{k} \times V)
\end{equation}
for $k \in \mathbb{N}$ with the $\mathcal{A}^e$-module structures
\begin{equation}
    \label{eq:module-structure-bar-complex}
    (\hat{a}\chi)(v,q_1,\ldots,q_k,w)=\hat{a}(v,w)\chi(v,q_1,\ldots,q_k,w)
\end{equation}
for $\hat{a}\in \mathcal{A}^e$, $\chi\in X_k$ and $v, w, q_1, \ldots,
q_k \in V$.  The bar complex $(X_\bullet,\partial_X)$ and the
corresponding bar resolution over $\mathcal{A}$ are then given by the
exact sequence
\begin{equation}
    \label{eq:diagram-bar-resolution}
      \xymatrix{{0} 
        & \mathcal{A} \ar[l] 
        & X_0 \ar[l]_\epsilon 
        & X_1 \ar[l]_{\partial_X^1} 
        & \ldots \ar[l]_{\partial_X^2} 
        & X_k \ar[l]_{\partial_X^k} 
        & \ldots \ar[l]_{\partial_X^{k+1}} 
      },
\end{equation}
where the boundary operators $\partial_X^k$ and the augmentation
$\epsilon$ are defined by
\begin{align}
    &(\partial_X^k \chi)(v,q_1,\ldots,q_{k-1},w)
    =
    \chi(v ,v,q_1,\ldots,q_{k-1},w)
    \nonumber\\
    &\quad 
    +\sum_{i=1}^{k-1}
    (-1)^i\chi(v,q_1,\ldots,q_i,q_i,\ldots,q_{k-1},w)
    +(-1)^k \chi(v,q_1,\ldots,q_{k-1},w,w)
    \label{eq:boundary-operator-bar-complex}
\end{align}
and
\begin{equation}
    \label{eq:augmentation}
    (\epsilon\hat{a})(v) =  \hat{a}(v, v).
\end{equation}
It is well known that $\partial_X^k$ and $\epsilon$ are homomorphisms
of $\mathcal{A}^e$-modules and $\partial_X^{k-1}\circ \partial_X^k=0$
for all $k\ge 2$ and $\epsilon\circ \partial_X^1=0$.  The well-known
homotopies $h_X^{-1}: \mathcal{A}\longrightarrow X_0$ and $h_X^k:
X_k\longrightarrow X_{k+1}$ are given by
\begin{equation}
    \label{eq:homotopybarcomplex}
    (h_X^{-1}a)(v,w) = a(v) 
    \quad
    \textrm{and}
    \quad
    (h_X^k \chi)(v,q_1, \ldots, q_{k+1},w)
    = (-1)^{k+1}\chi(v,q_1,\ldots,q_{k+1})
\end{equation}
for $k \ge 0$. Then one has
\begin{eqnarray}
    \epsilon \circ h_X^{-1}&=&\id_\mathcal{A}, \nonumber\\
    \label{eq:homotopy-bar-complex}
    h_X^{-1}\circ \epsilon +\partial_X^1 \circ h_X^0 &=& \id_{X_0}
    \quad \textrm{and}\\
    h_X^{k-1} \circ \partial_X^k +\partial_X^{k+1} \circ h_X^k &=&
    \id_{X_k} \quad \forall k\ge 1. \nonumber
\end{eqnarray}
Hence the sequence \eqref{eq:diagram-bar-resolution} is exact and thus
defines a resolution of $\mathcal{A}$. Note that the modules $X_k$ are
topologically free as $\mathcal{A}^e$-modules, confer
\cite{connes:1994a} for a more general version of this.

For the (topological) Koszul complex one considers the (topologically
free) $\mathcal{A}^e$-modules
\begin{equation}
    \label{eq:Koszul-complex-k}
    K_0  
    = \mathcal{A}^e
    \quad
    \textrm{and}
    \quad
    K_k  = 
    \mathcal{A}^e\otimes_{\mathbb{R}}
    \Anti^k(\mathbb{R}^n)^* 
    \cong C^\infty(V\times V, \Anti^k(\mathbb{R}^n)^*).
\end{equation}
With a basis $\{e_i\}_{i = 1, \ldots, n}$ of $\mathbb{R}^n$ and the
corresponding dual basis $\{e^i\}_{i = 1, \ldots, n}$ of
$(\mathbb{R}^n)^*$ the elements $\omega \in K_k$ can be written as
\begin{equation*}
    \omega
    = \frac{1}{k!} \sum_{i_1,\ldots,i_k=1}^n 
    \omega_{i_1\ldots i_k} e^{i_1} \wedge \ldots \wedge e^{i_k}
\end{equation*}
with $\omega_{i_1 \ldots i_k}\in \mathcal{A}^e$.  The Koszul complex
$(K_\bullet, \partial_K)$ and the corresponding \emph{finite}
resolution of $\mathcal{A}$ are given by
\begin{equation}
    \label{eq:diagram-Koszul-resolution}
     \xymatrix{{0} 
       & \mathcal{A} \ar[l] 
       & K_0 \ar[l]_\epsilon 
       & K_1 \ar[l]_{\partial_K^1} 
       & \ldots \ar[l]_{\partial_K^2} 
       & K_k \ar[l]_{\partial_K^k} 
       & \ldots \ar[l]_{\partial_K^{k+1}} 
       & K_n \ar[l]_{\partial_K^{n}}
       & \ar[l] 0
      },
\end{equation}
where the augmentation $\epsilon: \mathcal{A}^e\longrightarrow
\mathcal{A}$ is the same as in \eqref{eq:augmentation} and the
boundary operators $\partial_K^k$ are defined by
\begin{equation}
    \label{eq:boundary-operator-Koszul-complex}
    \left((\partial_K^k\omega)(v, w)\right)(x_1, \ldots, x_{k-1})
    =
    \left(\omega(v, w)\right)(v-w, x_1, \ldots, x_{k-1}) 
\end{equation}
for $v, w \in V$ and $x_1, \ldots, x_{k-1} \in \mathbb{R}^n$.  Again,
the maps $\partial_K^k$ are $\mathcal{A}^e$-module homomorphisms with
$\partial_K^{k-1} \circ \partial_K^k = 0 $ for all $k\ge 2$ and
$\epsilon \circ \partial_K^1 = 0$. The maps $h_K^{-1} = h_X^{-1}$ and
$h_K^k: K_k \longrightarrow K_{k+1}$ with
\begin{equation}
    \label{eq:homotopy-map-Koszul-complex-k}
    (h_K^k\omega)(v, w)
    = -e^j \wedge \int_0^1 t^k 
    \frac{\partial \omega}{\partial w^j} (v, tw + (1-t)v) \D t
\end{equation}
for $k \ge 0$ yield the homotopy identities
\begin{eqnarray}
    \epsilon \circ h_K^{-1}&=&\id_\mathcal{A}, \nonumber\\
    \label{eq:homotopy-Koszul-complex}
    h_K^{-1}\circ \epsilon +\partial_K^1 \circ h_K^0 &=& \id_{K_0}
    \quad \textrm{and}\\
    h_K^{k-1} \circ \partial_K^k +\partial_K^{k+1} \circ h_K^k &=&
    \id_{K_k} \quad \forall k\ge 1. \nonumber
\end{eqnarray}
Hence \eqref{eq:diagram-Koszul-resolution} is indeed a topologically
free resolution of $\mathcal{A}$. Note that in
\eqref{eq:homotopy-map-Koszul-complex-k} we made use of the convexity
of $V$.

For all $k\ge 0$ we consider the maps $F^k: K_k\longrightarrow X_k$
from \cite{connes:1994a} defined by
\begin{equation}
    \label{eq:chain-maps-Koszul-bar}
    (F^k \omega)(v, q_1, \ldots, q_k, w) 
    = \left( 
        \omega(v, w)
    \right)(q_1 - v, \ldots, q_k - v)
\end{equation}
and the maps $G^k: X_k \longrightarrow K_k$ from
\cite{bordemann.et.al:2005a:pre} defined by
\begin{equation}
    \label{eq:chain-maps-bar-Koszul}   
    \begin{split}
        (G^k \chi)(v, w)
        &= \sum_{i_1, \ldots, i_k = 1}^n 
        e^{i_1} \wedge \ldots \wedge e^{i_k} 
        \int_0^1 \int_0^{t_1} \cdots \int_0^{t_{k-1}} \\
        &\quad
        \frac{\partial^k\chi}
        {\partial q_1^{i_1} \cdots \partial q_k^{i_k}}
        (v, t_1 v + (1 - t_1) w, \ldots, t_k v + (1 - t_k) w, w)
        \D t_1 \cdots \D t_k.
    \end{split}
\end{equation}
In particular, $F^0 = \id_{\mathcal{A}^e} = G^0$.  Note that $G^k$ is
well-defined since we assume $V$ to be convex.  Clearly, these maps
are $\mathcal{A}^e$-module homomorphisms and it is a straightforward
computation to prove that $F^k$ and $G^k$ are chain maps, see
\cite{connes:1994a, bordemann.et.al:2005a:pre}.  This means that for
all $k \ge 0$
\begin{equation}
    \label{eq:chain-maps}
    F^k\circ \partial_K^{k+1} = \partial_X^{k+1} \circ F^{k+1} 
    \quad
    \textrm{and}
    \quad 
    G^k \circ \partial_X^{k+1} = \partial_K^{k+1} \circ G^{k+1}.
\end{equation}
Thus we have the commutative diagram of $\mathcal{A}^e$-module
morphisms
\begin{equation}
    \label{eq:diagram-bar-Koszul}
    \xymatrix{{0} 
      & \mathcal{A} \ar[l] \ar @{=} [d] 
      & X_0 \ar @{=} [d]_{\id_{\mathcal{A}^e}} \ar[l]_\epsilon 
      & X_1 \ar[l]_{\partial_X^1} \ar @<+.5ex>[d]^{G^1} 
      & \ldots \ar[l]_{\partial_X^2} 
      & X_k \ar[l]_{\partial_X^k} \ar @<+.5ex>[d]^{G^k} 
      & X_{k+1} \ar[l]_{\partial_X^{k+1}} \ar @<+.5ex>[d]^{G^{k+1}} 
      & \ldots \ar[l]_{\partial_X^{k+2}} \\
      {0} 
      & \mathcal{A} \ar[l] 
      & K_0 \ar[l]^\epsilon 
      & K_1 \ar[l]^{\partial_K^1} \ar @<+.5ex>[u]^{F^1} 
      & \ldots \ar[l]^{\partial_K^2} 
      & K_k \ar[l]^{\partial_K^k} \ar @<+.5ex>[u]^{F^k} 
      & K_{k+1} \ar[l]^{\partial_K^{k+1}} \ar @<+.5ex>[u]^{F^{k+1}} 
      & \ldots.  \ar[l]^{\partial_K^{k+2}} 
    }
\end{equation}
Another direct computation shows that
\begin{equation}
    \label{eq:chain-maps-id}
    G^k\circ F^k=\id_{K_k} \quad \forall k\ge0
\end{equation}
and thus it is clear that
\begin{equation}
    \label{eq:chain-maps-projection}
    \Theta^k= F^k\circ G^k: X_k\longrightarrow X_k 
\end{equation}
for all $k\ge 0$ is a projection $\Theta^k\circ \Theta^k=\Theta^k$ on
the bar complex with $\Theta^k\circ \partial_X^{k+1}=
\partial_X^{k+1}\circ \Theta^{k+1}$. Clearly, $\Theta^0=\id_{X_0}$.
Applying $\Theta^k$ to the last equation of
\eqref{eq:homotopy-bar-complex} leads to
\begin{equation}
    \label{eq:homotopy-Theta}
    \Theta^k\circ h_X^{k-1}\circ \partial_X^k +
    \partial_X^{k+1} \circ \Theta^{k+1} \circ h_X^k= \Theta^k.
\end{equation}
Subtraction of \eqref{eq:homotopy-Theta} from
\eqref{eq:homotopy-bar-complex} shows that the chain homomorphisms
$\id_{X}$ and $\Theta$ are homotopic since
\begin{equation}
    \label{eq:id-Theta-homotopic}
    \id_{X_k}-\Theta^k= s^{k-1}\circ \partial _X^k + \partial_X^{k+1}
    \circ s^k \quad \forall k\ge 1
\end{equation}
with
\begin{equation}
    \label{eq:map-for-id-Theta-homotopic}
    s^{k-1} = (\id_{X_k} - \Theta^k)\circ h_X^{k-1}:
    X_{k-1}\longrightarrow X_k.
\end{equation}
Note that in \cite{bordemann.et.al:2005a:pre} the homotopy formula
\eqref{eq:id-Theta-homotopic} was obtained by a recursive and less
explicit definition of $s^k$.
\begin{remark}
    \label{remark:Aufloesung}
    From the point of view of topologically projective (even free in
    our case) resolutions, the explicit construction of $F$ and $G$
    and the homotopies $s$ is obsolete: this follows by abstract
    nonsense arguments. However, later on we are interested in
    Hochschild cochains which have additional properties beside being
    continuous. For this refined notion we need to prove by hand that
    the maps $F$, $G$, $\Theta$, and $s$ are compatible with these
    additional requirements whence we need the \emph{explicit}
    formulas.
\end{remark}

From now on, we closely follow \cite{bordemann.et.al:2005a:pre}. Let
$\mathcal{M}$ be a Hausdorff and complete topological
$\mathcal{A}$-left module with respect to the Fr{\'e}chet topology of
$\mathcal{A} = C^\infty(V)$, i.e.\ the bilinear multiplication
\begin{equation}
    \label{eq:left-module-structure}
    (\mathcal{A}\times \mathcal{M})
    \ni (a,m) \mapsto a\cdot m \in \mathcal{M}
\end{equation} 
is continuous.  Furthermore, we demand that $\mathcal{M}$ has an
$\mathcal{A}$-right module structure such that there exists an $l \in
\mathbb{N}$ such that the right module multiplication can be expressed
in terms of the left module multiplication by
\begin{equation}
    \label{eq:right-module-structure-differential}
    m \cdot b
    = \sum_{|\beta|\le l}
    \frac{\partial^{|\beta|}b} {\partial v^\beta} \cdot m^\beta 
    \quad \forall b \in \mathcal{A}, m \in \mathcal{M}
\end{equation}
with elements $m^\beta\in \mathcal{M}$ depending continuously on $m$.
As $l$ is uniform for $\mathcal{M}$ it follows that the trilinear map
$(a, m, b) \mapsto a\cdot m\cdot b$ is continuous, too. Thus
$\mathcal{M}$ is a topological \emph{bimodule}. By continuity,
\begin{equation}
    \label{eq:regularity-condition}
    (a\otimes b) \cdot m 
    = a\cdot m \cdot b 
    \quad \forall a, b \in \mathcal{A}, m\in \mathcal{M}
\end{equation}
extends to a unique $\mathcal{A}^e$-left-module structure of
$\mathcal{M}$ which is explicitly given by
\begin{equation}
    \label{eq:extended-structure-and-left-module-structure}
    \hat{a} \cdot m
    = \sum_{|\beta|\le l} 
    \left( 
        v \mapsto 
        \left.
            \frac{\partial^{|\beta|}\hat{a}}
            {\partial w^\beta}(v, w)
        \right|_{w=v}
    \right) \cdot m^\beta
    \quad \forall \hat{a}\in \mathcal{A}^e.
\end{equation}

For the rest of this section we use a definition of differential maps,
which is slightly different to the purely algebraic definition.
\begin{definition}
    \label{definition:differential-maps}
    Let $k \in \mathbb{N}$. An $\mathbb{R}$-multi-linear map $\phi:
    \mathcal{A}\times \ldots \times \mathcal{A} \longrightarrow
    \mathcal{M}$ with $k$ arguments is said to be differential of
    multi-order $L=(l_1, \ldots, l_k) \in \mathbb{N}_0^k$, if it has
    the form
    \begin{equation}
        \label{eq:differential-map}
        \phi(a_1, \ldots, a_k)
        = 
        \sum_{|\alpha_1| \le l_1,\ldots,|\alpha_k| \le l_k}
        \left( 
            \frac{\partial^{|\alpha_1|} a_1}{\partial v^{\alpha_1}}
            \cdots
            \frac{\partial^{|\alpha_k|} a_k}{\partial v^{\alpha_k}}
        \right)
        \cdot \phi^{\alpha_1\cdots \alpha_k}
    \end{equation}
    with multi-indices $\alpha_1, \ldots, \alpha_k \in \mathbb{N}_0^n$
    and $\phi^{\alpha_1\cdots \alpha_k} \in \mathcal{M}$.  The
    $L$-differential maps are denoted by $\Diffop^L(\mathcal{A},
    \mathcal{M})$.
\end{definition}
\begin{remark}
    \begin{enumerate}
    \item Clearly, any differential map in the sense of
        Definition~\ref{definition:differential-maps} is continuous.
    \item For many examples of modules $\mathcal{M}$, like the one of
        interest in the present paper,
        Definition~\ref{definition:differential-maps} is consistent
        with the purely algebraic definition of multi-differential
        operators.
    \end{enumerate}
\end{remark}
In this sense, \eqref{eq:right-module-structure-differential} means
that the right module structure is differential with respect to the
left one, so $\phi_m: b \mapsto m \cdot b$ is a differential operator
depending on $m$ of order $l$.

For $k\ge 0$ we now consider the vector space
$\Hom_{\mathcal{A}^e}^{\mathrm{cont}} (X_k,\mathcal{M})$ of all
$\mathcal{A}^e$-linear and \emph{continuous} maps, which is both an
$\mathcal{A}$- and an $\mathcal{A}^e$-module. Using the pull-backs
$\delta_X^{k-1} = (\partial_X^k)^*$ it is clear that
$(\Hom_{\mathcal{A}^e}^{\mathrm{cont}} (X_\bullet,\mathcal{M}),
\delta_X)$ is a well-defined complex.  With the above results the
Hochschild complex
$(\mathrm{HC}^\bullet_{\mathrm{cont}}(\mathcal{A},\mathcal{M}),
\delta)$ of \emph{continuous cochains} is well-defined, too. By
continuity, the maps $\Xi^k: \Hom_{\mathcal{A}^e}^{\mathrm{cont}}(X_k,
\mathcal{M}) \longrightarrow
\mathrm{HC}^k_{\mathrm{cont}}(\mathcal{A},\mathcal{M})$ defined by
\begin{align}
    \label{eq:bijection-Hom-bar-Hochschild-complex-explicitly}
    \left( \Xi^k(\psi) \right) (a_1, \ldots, a_k)
    = \psi(1\otimes a_1 \otimes \ldots \otimes a_k \otimes 1)
\end{align}
are bijective. Additionally, it can be shown that 
\begin{equation}
    \label{eq:chain-homomorphism-Hom-bar-Hochschild-complex}
    \Xi \circ \delta_X = \delta \circ \Xi,
\end{equation}
so $\Xi$ is even an isomorphism of complexes.
Definition~\ref{definition:differential-maps} and
\eqref{eq:right-module-structure-differential} assure that the
continuous Hochschild complex contains the subcomplex of differential
cochains in the sense of
Definition~\ref{definition:differential-maps}. In this section we will
denote this subcomplex by
$(\mathrm{HC}^\bullet_{\mathrm{diff}}(\mathcal{A},\mathcal{M}),\delta)$
with a slight abuse of notation. In the next section we will
specialize to a particular bimodule $\mathcal{M}$ for which this
coincides with Example~\ref{example:DifferentialDeformations}.  This
and \eqref{eq:bijection-Hom-bar-Hochschild-complex-explicitly}
motivate the following definition.
\begin{definition} 
    \label{definition:DifferentialHomXk}
    Let $k\in \mathbb{N}_0$. An element $\psi \in \Hom_{\mathcal{A}^e}
    (X_k,\mathcal{M})$ is said to be differential of multi-order $L =
    (l_1, \ldots, l_k) \in \mathbb{N}_0^k$, if it has the form
    \begin{equation}
        \label{eq:Hom-modules-elements-differential}
        \psi =
        \sum_{
          \begin{subarray}{c}
              |\alpha_1|\le l_1,\ldots,|\alpha_k|\le l_k\\
              |\beta|\le l
          \end{subarray}
        }
        \left(
            v\mapsto \left.\frac{\partial^{|\alpha_1|+ \cdots + |\alpha_k|
                    +|\beta| }}
                {\partial q_1^{\alpha_1} \cdots \partial
                  q_k^{\alpha_k} \partial w^\beta}
            \right|_{
              \begin{subarray}{c}
                  q_1=\ldots=q_k=v\\
                  w=v
              \end{subarray}} 
            (\cdot) \right)
        \cdot \psi^{\alpha_1 \cdots \alpha_k\beta}
    \end{equation}
    with multi-indices $\alpha_1, \ldots, \alpha_k, \beta \in
    \mathbb{N}_0^n$ and $\psi^{\alpha_1 \cdots \alpha_k \beta}\in
    \mathcal{M}$.  The differential elements of multi-order $L$ are
    denoted by $\Hom_{\mathcal{A}^e}^{\mathrm{diff},L}
    (X_k,\mathcal{M})$.
\end{definition}
Equation \eqref{eq:Hom-modules-elements-differential} means that under
the map $\psi$ an element $\chi \in X_k$ first gets differentiated and
then evaluated at $q_1 = \ldots = q_k = w = v$. Finally, this
expression, seen as a function of $v \in V$ and therefore as an
element in $\mathcal{A}$, is multiplied with elements of $\mathcal{M}$
from the left.  A direct computation shows that
$\Hom_{\mathcal{A}^e}^{\mathrm{diff}} (X_\bullet, \mathcal{M})$ is a
subcomplex of $\Hom_{\mathcal{A}^e}^{\mathrm{cont}} (X_\bullet,
\mathcal{M})$.  By the very construction,
\eqref{eq:bijection-Hom-bar-Hochschild-complex-explicitly} restricts
to an isomorphism of complexes
\begin{equation}
    \label{eq:bijection-Hom-bar-Hochschild-diff}
    \Xi: \left(
        \Hom_{\mathcal{A}^e}^{\mathrm{diff}} (X_\bullet, \mathcal{M}),
        \delta_X
    \right)
    \longrightarrow
    \left(
        \mathrm{HC}^\bullet_{\mathrm{diff}} (\mathcal{A},\mathcal{M}),
        \delta
    \right).
\end{equation}

In a last step we consider the complex $(\Hom_{\mathcal{A}^e}
(K_\bullet,\mathcal{M}), \delta_K)$ with $\delta_K^{k-1} =
(\partial_K^k)^*$ and find the following important proposition.
\begin{proposition}
    \label{proposition:restrictions-diff}
    Let $k\in \mathbb{N}_0$. The pull-backs
    \begin{equation}
        \label{eq:differential-G-star}
        (G^k)^*: \Hom_{\mathcal{A}^e} (K_k,\mathcal{M})
        \longrightarrow \Hom_{\mathcal{A}^e}^{\mathrm{diff},L}
        (X_k,\mathcal{M})
    \end{equation}
    only take values in the differential cochains of multi-order $L =
    (l+1, \ldots, l+1)$. With the same multi-index $L$ we have
    \begin{equation}
        \label{eq:differential-Theta-star}
        (\Theta^k)^*:
        \Hom_{\mathcal{A}^e}^{\mathrm{diff}}(X_k, \mathcal{M})
        \longrightarrow
        \Hom_{\mathcal{A}^e}^{\mathrm{diff},L}(X_k, \mathcal{M}).
    \end{equation}
    Further, the pull-backs $(h_X^k)^*$ restrict to the
    differential complex in such a way that
    \begin{equation}
        \label{eq:differential-h-star}
        (h_X^{k})^*:
        \Hom_{\mathcal{A}^e}^{\mathrm{diff},L}(X_{k+1}, \mathcal{M})
        \longrightarrow
        \Hom_{\mathcal{A}^e}^{\mathrm{diff},\tilde{L}}(X_k,\mathcal{M}) 
    \end{equation}
    for all $L = (l_1, \ldots, l_{k+1}) \in \mathbb{N}_0^{k+1}$ with
    $\tilde{L}=(l_1,\ldots,l_k)$.
\end{proposition}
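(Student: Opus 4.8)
The plan is to verify each of the three claims by a direct computation using the explicit formulas \eqref{eq:chain-maps-bar-Koszul} for $G^k$, \eqref{eq:chain-maps-Koszul-bar} for $F^k$, \eqref{eq:homotopybarcomplex} for $h_X^k$, and the formula \eqref{eq:Hom-modules-elements-differential} defining differentiality of an element $\psi \in \Hom_{\mathcal{A}^e}(X_k, \mathcal{M})$. The common mechanism in all three cases is that the operators $G^k$, $F^k$, $h_X^k$ (and hence $\Theta^k$) are built from finitely many partial derivatives in the $V^k$-variables followed by substitutions of the arguments; composing with a continuous $\mathcal{A}^e$-linear map $\mathcal{M}$-valued map and recalling \eqref{eq:right-module-structure-differential} (which introduces at most $l$ further $w$-derivatives) produces precisely an expression of the shape \eqref{eq:Hom-modules-elements-differential}.

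First I would treat $(G^k)^*$. For $\mu \in \Hom_{\mathcal{A}^e}(K_k, \mathcal{M})$ we have $(G^k)^*\mu = \mu \circ G^k$, and applying this to $1 \otimes a_1 \otimes \cdots \otimes a_k \otimes 1 \in X_k$ and using \eqref{eq:chain-maps-bar-Koszul} gives a sum over $i_1, \ldots, i_k$ of iterated integrals of $\frac{\partial^k}{\partial q_1^{i_1} \cdots \partial q_k^{i_k}}$ of the argument evaluated along the simplex $t_1 v + (1-t_1)w, \ldots$. Since each $a_j$ appears only through one first-order derivative $\partial_{i_j} a_j$, and $G^k$ lands in $K_k = C^\infty(V \times V, \Anti^k(\mathbb{R}^n)^*)$ on which $\mu$ acts via at most $l$ derivatives in $w$ by \eqref{eq:right-module-structure-differential}, expanding everything and setting $q_1 = \cdots = q_k = w = v$ (the integrals become harmless scalar factors after this substitution once the chain rule in the $w$-dependence is carried out) yields exactly the form \eqref{eq:Hom-modules-elements-differential} with all $|\alpha_j| \le 1$ plus the $w$-derivatives of order $\le l$, i.e. multi-order $L = (l+1, \ldots, l+1)$. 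The claim about $(\Theta^k)^* = (F^k)^* \circ (G^k)^*$ then follows since $(F^k)^*$ takes a \emph{differential} cochain on $X_k$ to a cochain on $K_k$ (this is the content of the chain map $F^k$ together with \eqref{eq:chain-maps-Koszul-bar}, which only evaluates at $q_j = $ fixed points and so cannot raise orders), whence $(\Theta^k)^*$ factors through $\Hom_{\mathcal{A}^e}(K_k, \mathcal{M})$ and we apply the first part.

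For the last claim, $(h_X^k)^*\psi = \psi \circ h_X^k$ with $h_X^k$ given by $(h_X^k\chi)(v, q_1, \ldots, q_{k+1}, w) = (-1)^{k+1}\chi(v, q_1, \ldots, q_{k+1})$. Thus the last slot $w$ of $h_X^k\chi$ is simply dropped (set equal to $q_{k+1}$ in effect); if $\psi$ is differential of multi-order $L = (l_1, \ldots, l_{k+1})$ on $X_{k+1}$, then precomposition differentiates in $q_1, \ldots, q_{k+1}$ up to orders $l_1, \ldots, l_{k+1}$ and in $w$ up to order $l$, but the $w$-variable of $\chi \in X_k$ now occupies the former $q_{k+1}$-slot, so the total $q_{k+1}$-order is $\le l_{k+1} + l$ — wait, this is fine since $\mathcal{M}$ still only sees $l$ derivatives. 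Reading off, the resulting element of $\Hom_{\mathcal{A}^e}(X_k, \mathcal{M})$ is differential of multi-order $\tilde{L} = (l_1, \ldots, l_k)$ exactly as asserted, because the $(k+1)$-st differentiation now contributes to the module-side and drops out of the multi-order in the first $k$ arguments.

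The main obstacle, and the reason the explicit formulas (rather than abstract resolution arguments, cf. Remark~\ref{remark:Aufloesung}) are indispensable, is bookkeeping the orders of differentiation carefully under composition: one must check that no step silently raises the order beyond the claimed bound and that the substitution of all free variables to $v$ is compatible with the shape \eqref{eq:Hom-modules-elements-differential}. In particular, for $(\Theta^k)^*$ one has to be sure that applying $(G^k)^*$ first does not require $\psi$ to already be differential — it does not, but one needs the factorization $(\Theta^k)^* = (F^k)^* \circ (G^k)^*$ and the observation that $(G^k)^*$ is defined on \emph{all} of $\Hom_{\mathcal{A}^e}(K_k, \mathcal{M})$, while $(F^k)^*$ needs a differential argument; composing in the correct order resolves this. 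I expect the verification of \eqref{eq:differential-G-star} to be the most laborious single computation, the others following by similar but shorter arguments.
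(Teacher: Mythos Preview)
Your approach is exactly the one the paper takes: the paper's own proof is the single sentence ``All assertions follow from easy computations and an explicit counting of the orders of differentiation.'' So there is no alternative route to compare against --- you have correctly identified that this is a bookkeeping exercise in which the explicit formulas \eqref{eq:chain-maps-bar-Koszul}, \eqref{eq:chain-maps-Koszul-bar}, \eqref{eq:homotopybarcomplex} do all the work.

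Two slips to clean up. First, the factorisation of $(\Theta^k)^*$: since $\Theta^k = F^k \circ G^k$, pulling back gives $(\Theta^k)^* = (G^k)^* \circ (F^k)^*$, not $(F^k)^* \circ (G^k)^*$ as you wrote. Your verbal description of the flow (apply $(F^k)^*$ first, then $(G^k)^*$) is correct, so this is only a notational transposition; but it matters because the whole point is that $(F^k)^*$ lands in $\Hom_{\mathcal{A}^e}(K_k, \mathcal{M})$ and then the already-proved part \eqref{eq:differential-G-star} applies. Second, your remark that ``$(F^k)^*$ needs a differential argument'' is wrong: $(F^k)^*\psi = \psi \circ F^k$ is perfectly well defined for \emph{any} $\psi \in \Hom_{\mathcal{A}^e}(X_k, \mathcal{M})$, since $F^k$ is an $\mathcal{A}^e$-module map. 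This actually makes your argument for $(\Theta^k)^*$ cleaner than you feared: no hypothesis on $\psi$ is needed at all to land in $\Hom_{\mathcal{A}^e}(K_k, \mathcal{M})$, and then \eqref{eq:differential-G-star} gives the bound $(l+1,\ldots,l+1)$ immediately.

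For $(h_X^k)^*$ your ``wait'' moment is harmless: the $q_{k+1}$-slot of $\psi$ becomes the $w$-slot of $\chi$, so the $l_{k+1}$ derivatives in that slot are absorbed into the $\beta$-index of the form \eqref{eq:Hom-modules-elements-differential} and the remaining $q$-multi-order is exactly $\tilde L = (l_1,\ldots,l_k)$. That is all the proposition claims.
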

\begin{proof}
    All assertions follow from easy computations and an explicit
    counting of the orders of differentiation.
\end{proof}

With the corresponding pull-backs $(F^k)^*$ we get the
commutative diagram
\begin{equation}
    \label{eq:diagram-Hom-bar-Koszul}
    \xymatrix{
      \ldots \ar[rr]^(.35){\delta_X^{k-1}} &&
      \Hom_{\mathcal{A}^e}^{\mathrm{diff}} (X_k,\mathcal{M}) \ar
      @<-.5ex> [d]_{(F^k)^*}
      \ar [rr]^{\delta_X^k}&&
      \Hom_{\mathcal{A}^e}^{\mathrm{diff}} (X_{k+1},\mathcal{M}) \ar 
      @<-.5ex> [d]_{(F^{k+1})^*}
      \ar[rr]^(.65){\delta_X^{k+1}}&&
      \ldots\\
      \ldots \ar[rr]^(.35){\delta_X^{k-1}} &&
      \Hom_{\mathcal{A}^e} (K_k,\mathcal{M}) \ar @<-.5ex>
      [u]_{(G^k)^*} 
      \ar [rr]^{\delta_K^k}&&
      \Hom_{\mathcal{A}^e} (K_{k+1},\mathcal{M}) \ar @<-.5ex>
      [u]_{(G^{k+1})^*} 
      \ar[rr]^(.65){\delta_K^{k+1}}&&
      \ldots.
    }
\end{equation}
With Proposition \ref{proposition:restrictions-diff} we thus have a
corresponding equation to \eqref{eq:id-Theta-homotopic} and
consequently the complexes in \eqref{eq:diagram-Hom-bar-Koszul} have
the same cohomologies.  Together with the isomorphism
\eqref{eq:bijection-Hom-bar-Hochschild-diff}, we get the isomorphisms
\begin{equation}
    \label{eq:equivalence-Hochschild-Hom-Koszul-cohomology}
    \mathrm{HH}^\bullet_{\mathrm{diff}}(\mathcal{A},\mathcal{M})\cong
    \mathrm{H}(\Hom_{\mathcal{A}^e}^{\mathrm{diff}}
    (X_\bullet,\mathcal{M})) \cong
    \mathrm{H}(\Hom_{\mathcal{A}^e} (K_\bullet,\mathcal{M}))
\end{equation}
for the cohomology of the differential Hochschild complex.  Note that
every isomorphism in
\eqref{eq:equivalence-Hochschild-Hom-Koszul-cohomology} is induced by
explicitly given maps on the level of cochains. Concerning our further
application of these results, we need the following obvious
generalization.
\begin{proposition}
    \label{proposition:isomorphism-Hochschild-bar}
    Let $\mathcal{M}^\bullet = \bigcup_{l=0}^\infty \mathcal{M}^l$ be
    a filtered $\mathcal{A}$-module, i.e.\  $\mathcal{M}^l\subset
    \mathcal{M}^{l+1}$ and $\mathcal{A} \cdot \mathcal{M}^l\subset
    \mathcal{M}^l$ for all $l\in \mathbb{N}$, such that every
    $\mathcal{M}^l$ satisfies the properties above. Moreover, the
    topologies have to respect the filtration, which means that for
    all $l \in \mathbb{N}$ the topology of $\mathcal{M}^l$ is given by
    the induced one from $\mathcal{M}^{l+1}$.  Then we have:
    \begin{enumerate}
    \item The unions
        \begin{equation}
            \label{eq:extended-complexes}
            \left(\Union_{l=0}^\infty \mathrm{HC}^\bullet_{\mathrm{diff}}
                (\mathcal{A},\mathcal{M}^l),\delta \right),
            \left(\Union_{l=0}^\infty
                \Hom_{\mathcal{A}^e}^{\mathrm{diff}} (X_\bullet,
                \mathcal{M}^l), \delta_X \right), \: \textrm{and} \:
            \left(\Union_{l=0}^\infty
                \Hom_{\mathcal{A}^e}(K_\bullet, \mathcal{M}^l),\delta_K \right)
        \end{equation}
        are sub-complexes of
        $\HC^\bullet(\mathcal{A},\mathcal{M}^\bullet)$,
        $\Hom_{\mathcal{A}^e}^{\mathrm{cont}} (X_\bullet,
        \mathcal{M}^\bullet)$, and $\Hom_{\mathcal{A}^e}(K_\bullet,
        \mathcal{M}^\bullet)$, respectively.
    \item The isomorphisms
        \eqref{eq:bijection-Hom-bar-Hochschild-diff} for each $l$
        induce an isomorphism of complexes
        \begin{equation}
            \label{eq:isomorphism-complex}
            \Xi: 
            \left(
                \Union_{l=0}^\infty
                \Hom_{\mathcal{A}^e}^{\mathrm{diff}}
                (X_\bullet, \mathcal{M}^l), 
                \delta_X
            \right)  
            \longrightarrow
            \left(
                \Union_{l=0}^\infty 
                \mathrm{HC}^\bullet_{\mathrm{diff}} 
                (\mathcal{A}, \mathcal{M}^l),
                \delta 
            \right). 
        \end{equation}
    \item The pull-backs $(G^k)^*, (F^k)^*, (\Theta^k)^*, (h_X^k)^*$
        and $(s^k)^*$ naturally extend to the complexes
        \eqref{eq:extended-complexes}. Thus, we have induced
        isomorphisms for the corresponding cohomologies.
    \end{enumerate}
\end{proposition}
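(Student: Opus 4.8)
The statement is flagged as ``obvious'' because it reduces to a single observation: for a filtered module $\mathcal{M}^\bullet=\Union_{l}\mathcal{M}^l$ the three complexes in \eqref{eq:extended-complexes} are directed unions (colimits along the inclusions $\iota_l\colon\mathcal{M}^l\hookrightarrow\mathcal{M}^{l+1}$) of the corresponding complexes for the individual modules $\mathcal{M}^l$, and \emph{every} map used in this section --- the differentials $\delta,\delta_X,\delta_K$, the isomorphism $\Xi$, the pull-backs $(F^k)^*,(G^k)^*,(\Theta^k)^*,(h_X^k)^*,(s^k)^*$, and the homotopies relating them --- is given by a formula that is natural in the coefficient module and hence commutes with each $\iota_l$. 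So the plan is: apply everything proved so far to each $\mathcal{M}^l$ separately, check compatibility with the filtration inclusions, and pass to the colimit.

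For part \textit{i.)} I would fix $l$ and note that $\mathcal{M}^l$ satisfies all the hypotheses of this section, with its own (fixed) right-module order $l_0=l_0(l)$, so Definitions~\ref{definition:differential-maps} and~\ref{definition:DifferentialHomXk}, the isomorphism \eqref{eq:bijection-Hom-bar-Hochschild-diff}, Proposition~\ref{proposition:restrictions-diff} and the homotopy \eqref{eq:id-Theta-homotopic} all apply to $\mathcal{M}^l$. Since $\iota_l$ is a continuous bimodule morphism, post-composition with $\iota_l$ sends $\Diffop^L(\mathcal{A},\mathcal{M}^l)$, $\Hom_{\mathcal{A}^e}^{\mathrm{diff},L}(X_\bullet,\mathcal{M}^l)$ and $\Hom_{\mathcal{A}^e}(K_\bullet,\mathcal{M}^l)$ into the respective objects for $\mathcal{M}^{l+1}$ with the \emph{same} multi-order $L$, and commutes with $\delta,\delta_X,\delta_K$ because these are pull-backs along maps of the resolutions that do not involve the coefficients. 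Thus each of the three families is a directed system of complexes with injective structure maps, each term sitting inside the corresponding complex for $\mathcal{M}^\bullet$; a directed union of subcomplexes is a subcomplex, which is \textit{i.)}. (Since a differential cochain has a fixed multi-order and therefore only finitely many coefficients in $\mathcal{M}^\bullet=\Union_l\mathcal{M}^l$, and $\Hom_{\mathcal{A}^e}(K_k,\mathcal{M}^\bullet)$ is a finite sum of copies of $\mathcal{M}^\bullet$, these unions even \emph{equal} the differential complexes for $\mathcal{M}^\bullet$, but this is not needed.)

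For \textit{ii.)}, the map $\Xi^k$ of \eqref{eq:bijection-Hom-bar-Hochschild-complex-explicitly} is evaluation at $1\otimes a_1\otimes\cdots\otimes a_k\otimes 1$, a formula not mentioning the target, so $\Xi$ strictly commutes with each $\iota_l$; since the horizontal maps $\Xi$ are isomorphisms of complexes for every $l$, the induced map on the unions is again an isomorphism of complexes --- a colimit of isomorphisms along commuting squares is an isomorphism --- giving \eqref{eq:isomorphism-complex}. Part \textit{iii.)} follows the same pattern: each of $(F^k)^*,(G^k)^*,(\Theta^k)^*,(h_X^k)^*,(s^k)^*$ is precomposition with a fixed $\mathcal{A}^e$-module map between the resolutions, hence commutes with $\iota_l$ and, by Proposition~\ref{proposition:restrictions-diff} applied to $\mathcal{M}^l$, restricts to the differential subcomplexes with the stated control on multi-orders; so all of them descend to the unions. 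The pulled-back homotopy \eqref{eq:id-Theta-homotopic}, together with $G^k\circ F^k=\id$, holds on each layer $\mathcal{M}^l$ with homotopy $(s^k)^*$, hence survives in the colimit; therefore $(F^\bullet)^*$ and $(G^\bullet)^*$ induce mutually inverse isomorphisms on the cohomologies of the three unions, and combined with $\Xi$ this is the filtered version of \eqref{eq:equivalence-Hochschild-Hom-Koszul-cohomology}.

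The only point deserving care --- and the sole ``obstacle'', such as it is --- is the bookkeeping around the integer $l$: throughout this section $l$ was a \emph{uniform} bound on the order of the right-module structure, whereas for a filtered module it may increase with the filtration degree. This is handled precisely by the colimit viewpoint above: one never needs a uniform bound, only a fixed bound on each layer $\mathcal{M}^l$, and the naturality of all maps involved makes the layers fit together. Beyond this, the proof is a routine reshuffling of results already established, which is why I would simply record it as ``an easy computation''.
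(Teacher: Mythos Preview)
Your proposal is correct and matches the paper's approach: the paper gives no proof at all for this proposition, introducing it as ``the following obvious generalization'' and leaving the details implicit. Your colimit/naturality argument is exactly the routine verification the authors had in mind, and your one flagged subtlety --- that the uniform right-module order $l$ may grow with the filtration degree but only a layerwise bound is needed --- is precisely the point that justifies calling it a generalization rather than a tautology.
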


%
%

\section{Deformation Quantization of Surjective Submersions}
\label{sec:SurjectiveSubmersions}

In this section we prove Theorem~\ref{theorem:def-quant-surj-sub}.
Thus let $\mathsf{p}: P \longrightarrow M$ be a surjective submersion
with total space $P$ and basis $M$ of dimension $n$. It is easy to see
that the smooth functions $C^\infty(M)$ and $C^\infty(P)$ endowed with
the point-wise product and the right module structure
\begin{equation}
    \label{eq:right-module-structure-surjective-submersion}
    \rho_0 (f,a) = f\cdot  \mathsf{p}^*a 
    \quad
    \forall f \in C^\infty(P), a \in C^\infty(M)
\end{equation}
satisfy all assumptions of Example~\ref{example:DifferentialDeformations}.
\begin{remark}
    For any manifold $M$ and any $C^\infty(M)$-(bi)module
    $\mathcal{M}$ we use the abbreviations
    \begin{equation}
        \label{eq:abbreviation-Diffop-P}
        \Diffop^\bullet(M) 
        = \Diffop^\bullet
        (C^\infty(M); C^\infty(M)),
    \end{equation}
    \begin{equation}
        \label{eq:abbreviation-Diffop-M}        
        \Diffop^L(M, \mathcal{M}) = 
        \Diffop^L (\underbrace{C^\infty(M), \ldots,
          C^\infty(M)}_{k\textrm{-times}}; \mathcal{M})  
    \end{equation}
    for $L = (l_1, \ldots, l_k)\in \mathbb{N}_0^k,$ $k\in \mathbb{N}$.
    Moreover, we set
    \begin{equation}
        \label{eq:abbreviation-Hochschild}
        \mathrm{HC}^\bullet(M,\mathcal{M})
        = \mathrm{HC}^\bullet(C^\infty(M), \mathcal{M}).
    \end{equation}
\end{remark}

As we are interested in differential deformations of
\eqref{eq:right-module-structure-surjective-submersion}, we have to
consider the Hochschild complex $\mathrm{HC}^\bullet_\mathrm{diff} (M,
\Diffop(P))$ as in Example~\ref{example:DifferentialDeformations}. A
straightforward generalization of the well-known considerations for
ordinary differential operators, see e.g.\ \cite{waldmann:2007a},
App.~A.5, shows that in the present situation we have the expected
local expressions.
\begin{lemma}
    \label{lemma:local-form-of-Diffops}
    Let $(U,x)$ be a local chart of $M$ and $\phi\in \Diffop^L(M,
    \Diffop^\bullet (P))$ with multi-order of differentiation $L =
    (l_1, \ldots, l_k)\in \mathbb{N}_0^k$. Then there exist uniquely
    defined differential operators $\phi_U^{\alpha_1 \cdots
      \alpha_k}\in \Diffop^\bullet (\mathsf{p}^{-1}(U))$ with
    multi-indices $\alpha_i\in \mathbb{N}_0^n$, $i=1,\ldots, k$, such
    that
    \begin{equation}
        \label{eq:local-form-of-Diffop}
        \phi(a_1,\ldots,a_k)|_{\mathsf{p}^{-1}(U)}= \sum_{|\alpha_1|\le l_1,\ldots,
          |\alpha_k|\le l_k}
        \left(\frac{\partial^{|\alpha_1|}a_1}{\partial x^{\alpha_1}}
            \cdots 
            \frac{\partial^{|\alpha_k|}a_k}{\partial
              x^{\alpha_k}}\right) \cdot \phi_U^{\alpha_1 \cdots \alpha_k}
    \end{equation}
    for all $a_1,\ldots, a_k\in C^\infty(M)$, where
    $|_{\mathsf{p}^{-1}(U)}$ denotes the restriction of differential
    operators.
\end{lemma}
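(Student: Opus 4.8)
The plan is to reduce the global statement to a purely local assertion about differential operators and then to patch the local data together, the only subtlety being the well-definedness (i.e.\ independence of the chosen chart) of the operators $\phi_U^{\alpha_1\cdots\alpha_k}$. First I would fix a chart $(U,x)$ of $M$ and choose, for every multi-index $\alpha\in\mathbb{N}_0^n$ with $|\alpha|\le \max_i l_i$, functions $a^{(\alpha)}\in C^\infty(M)$ that agree with the monomial $\frac{1}{\alpha!}(x-x_0)^{\alpha}$ near a given point of $U$ (a standard cutoff argument), so that their partial derivatives at that point reproduce the Kronecker symbols $\delta^\alpha_\beta$. Plugging the tuples $(a^{(\alpha_1)},\dots,a^{(\alpha_k)})$ into $\phi$ and restricting to $\mathsf{p}^{-1}(U)$ then isolates, by the very definition of $\Diffop^L(M,\Diffop^\bullet(P))$ as maps that are differential of fixed multi-order $L$ in the $C^\infty(M)$-arguments, exactly one candidate for $\phi_U^{\alpha_1\cdots\alpha_k}$; one checks it lies in $\Diffop^\bullet(\mathsf{p}^{-1}(U))$ because it is obtained from $\phi$ by composing with the fixed smooth functions and the projection, and $\phi$ itself already takes values in differential operators on $P$. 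This is the same bookkeeping as in the classical case of a single differential operator, cf.\ \cite{waldmann:2007a}, App.~A.5.

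Next I would verify the representation \eqref{eq:local-form-of-Diffop}: since both sides are $C^\infty(M)$-multidifferential of the same multi-order in the arguments $a_1,\dots,a_k$, and since differential operators of a bounded order are determined by their action on polynomials, it suffices to test the identity on monomial tuples; by the choice of the $a^{(\alpha)}$ and a Taylor-expansion argument (here again one uses that $\phi$ is differential, so only finitely many derivatives at each point enter) both sides agree, hence they agree for all $a_1,\dots,a_k\in C^\infty(M)$ after restriction to $\mathsf{p}^{-1}(U)$. Uniqueness of the $\phi_U^{\alpha_1\cdots\alpha_k}$ is then immediate: if two collections gave the same $\phi|_{\mathsf{p}^{-1}(U)}$, subtracting and evaluating on the monomial tuples forces all differences to vanish on $\mathsf{p}^{-1}(U)$.

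The step I expect to require the most care is the compatibility of the locally defined operators under a change of chart, i.e.\ the fact that the $\phi_U^{\alpha_1\cdots\alpha_k}$ genuinely depend only on $\phi$ and the chart $(U,x)$ and transform correctly on overlaps $\mathsf{p}^{-1}(U)\cap\mathsf{p}^{-1}(U')$. This is handled exactly as in the scalar case: the transformation law for the symbols under $x\mapsto x'$ is dictated by the chain rule applied to the derivatives $\frac{\partial^{|\alpha_i|}a_i}{\partial x^{\alpha_i}}$, and one verifies that the two local expressions for $\phi(a_1,\dots,a_k)$ coincide on the overlap because $\phi$ is globally defined; uniqueness on each chart then upgrades this to an identity among the symbols themselves. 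No genuinely new phenomenon arises from the presence of the submersion $\mathsf{p}$: the total-space directions are simply carried along as parameters, since the $C^\infty(M)$-module structure on $C^\infty(P)$ is via $\mathsf{p}^*$ and all differentiations in \eqref{eq:local-form-of-Diffop} act on the base arguments only. I would therefore present this as a "straightforward generalization" with the key points indicated, rather than spelling out the chain-rule computation.
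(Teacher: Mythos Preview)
Your approach is correct and matches what the paper does: the paper gives no proof at all, merely stating that the lemma is a ``straightforward generalization of the well-known considerations for ordinary differential operators'' with a reference to \cite{waldmann:2007a}, App.~A.5, and your extraction of the coefficients by evaluating on monomial test functions is exactly that standard argument.

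One remark: your third paragraph on compatibility under change of chart is not needed for the lemma as stated. The assertion is only that for a \emph{fixed} chart $(U,x)$ there exist uniquely determined $\phi_U^{\alpha_1\cdots\alpha_k}$; no gluing or transformation behaviour is claimed, so you can drop that discussion entirely.
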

\begin{remark}
    \label{remark:local-form-of-Diffops}
    It is easy to check that Lemma~\ref{lemma:local-form-of-Diffops}
    even holds for $\phi \in \Diffop^L(M, \Diffop^l (P))$ with a fixed
    $l\in \mathbb{N}_0$.  Then $\phi_U^{\alpha_1 \cdots \alpha_k}\in
    \Diffop^l (\mathsf{p}^{-1}(U))$.
\end{remark}
\begin{lemma}
    \label{lemma:Hochschild-complex-simple}
    Let $\mathsf{p}: P \longrightarrow M$ be a surjective submersion.
    Then for all $k \in \mathbb{N}$
    \begin{equation}
        \label{eq:Hochschild-complex-simple}
        \HCdiff^k (M, \Diffop(P))
        =
        \Union_{L \in \mathbb{N}_0^k}
        \Diffop^L (M, \Diffop^\bullet(P)).
    \end{equation}
\end{lemma}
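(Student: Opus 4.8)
The stated equality will follow once we settle the inclusion from right to left; the opposite inclusion is immediate. Indeed, by Example~\ref{example:DifferentialDeformations} (applied with $\mathcal{A}=C^\infty(M)$, $\mathcal{E}=C^\infty(P)$) we have $\HCdiff^k(M,\Diffop(P))=\Union_{L\in\mathbb{N}_0^k}\Union_{l\in\mathbb{N}_0}\Diffop^L(M,\Diffop^l(P))$, which is contained in $\Union_{L\in\mathbb{N}_0^k}\Diffop^L(M,\Diffop^\bullet(P))$ simply because $\Diffop^l(P)\subseteq\Diffop^\bullet(P)$. So the real content is the following: given a fixed multi-order $L=(l_1,\dots,l_k)$ and $\phi\in\Diffop^L(M,\Diffop^\bullet(P))$, we must find a \emph{single} $l\in\mathbb{N}_0$ such that $\phi(a_1,\dots,a_k)\in\Diffop^l(P)$ for all $a_1,\dots,a_k\in C^\infty(M)$ --- that is, the order of the differential operator $\phi(a_1,\dots,a_k)$ on $P$ is bounded uniformly in its arguments.

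To get this uniform bound I would use a finiteness property of the base together with the module structure on $\Diffop^\bullet(P)$. Being differential of multi-order $\le L$, the cochain $\phi$ factors through jets: there is a unique $C^\infty(M)$-linear map $\hat\phi\colon\Gamma^\infty(\mathcal{J}^L)\longrightarrow\Diffop^\bullet(P)$, where $\mathcal{J}^L=J^{l_1}M\otimes_M\cdots\otimes_M J^{l_k}M$ is the (fibrewise) tensor product of the jet bundles of $M$ and $\Diffop^\bullet(P)$ carries the $C^\infty(M)$-module structure $a\cdot D=\rho_0(a)\circ D=(\mathsf{p}^*a)\circ D$ of \eqref{eq:FunnyBimoduleStructure}, such that $\phi(a_1,\dots,a_k)=\hat\phi(j^{l_1}a_1\otimes\cdots\otimes j^{l_k}a_k)$. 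Now $\mathcal{J}^L$ has \emph{finite} rank over the paracompact manifold $M$, hence is a direct summand of a trivial bundle; therefore $\Gamma^\infty(\mathcal{J}^L)$ is a finitely generated $C^\infty(M)$-module, say generated by $s_1,\dots,s_m$. Put $D_i:=\hat\phi(s_i)\in\Diffop^\bullet(P)$ and $l:=\max_i\mathrm{ord}(D_i)$, which is finite. Writing $j^{l_1}a_1\otimes\cdots\otimes j^{l_k}a_k=\sum_i b_i s_i$ with $b_i\in C^\infty(M)$ and using $C^\infty(M)$-linearity of $\hat\phi$ gives $\phi(a_1,\dots,a_k)=\sum_{i=1}^m(\mathsf{p}^*b_i)\circ D_i$; since each $\mathsf{p}^*b_i$ is an order-zero operator on $P$, composing with it does not raise the order, so $\phi(a_1,\dots,a_k)\in\Diffop^l(P)$. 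Hence $\phi\in\Diffop^L(M,\Diffop^l(P))$, completing the argument.

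The routine part --- writing $\phi$, and hence $\hat\phi$, in local charts of $M$ and checking that the coefficients are genuine differential operators on the $\mathsf{p}^{-1}(U)$ --- is exactly what Lemma~\ref{lemma:local-form-of-Diffops} and Remark~\ref{remark:local-form-of-Diffops} supply. The one genuine obstacle is the passage from local to global boundedness of the order: on each chart $U$ Lemma~\ref{lemma:local-form-of-Diffops} only gives coefficient operators of finite, a priori $U$-dependent order, and on a non-compact $P$ an uncontrolled supremum over the charts would destroy the statement. This is precisely the failure mode alluded to at the end of Example~\ref{example:DifferentialDeformations} for a general module in place of $C^\infty(P)$; here it is ruled out by the finite generation of $\Gamma^\infty(\mathcal{J}^L)$ over $C^\infty(M)$, which reduces everything to the finitely many fixed operators $D_1,\dots,D_m$.
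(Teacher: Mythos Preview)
Your argument is correct, and it takes a genuinely different route from the paper's proof. The paper proceeds locally: by Lemma~\ref{lemma:local-form-of-Diffops} each restriction $\phi|_{\tilde U_\alpha}$ is built from finitely many coefficient operators $\phi_{U_\alpha}^{\alpha_1\cdots\alpha_k}\in\Diffop(\mathsf{p}^{-1}(U_\alpha))$, which gives a chart-dependent bound $l_\alpha$; it then argues, somewhat tersely, that by plugging in ``appropriately chosen $a_1,\dots,a_k$'' one can extract these local coefficients as values of $\phi$ and hence force the $l_\alpha$ to be uniformly bounded, using that each individual $\phi(a_1,\dots,a_k)$ already lies in some $\Diffop^l(P)$.

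Your approach bypasses this local-to-global step by a single global finiteness fact: the sections of the finite-rank jet bundle $\mathcal{J}^L$ form a finitely generated $C^\infty(M)$-module, so the $C^\infty(M)$-linear factorization $\hat\phi$ is determined by finitely many values $D_1,\dots,D_m$, and $l=\max_i\mathrm{ord}(D_i)$ works globally. This is cleaner and makes the mechanism transparent (it is essentially a Serre--Swan argument), whereas the paper's proof keeps closer to the explicit local formulas already set up in Lemma~\ref{lemma:local-form-of-Diffops}. Both arguments ultimately rely on the same underlying fact---the finiteness of the jet data controlling a differential operator of fixed multi-order---but yours packages it more invariantly.
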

\begin{proof}
    By \eqref{eq:HCdiffDiffopE} we have to prove that
    $\Diffop^L(M, \Diffop(P)) \subseteq \bigcup_{l=0}^\infty
    \Diffop^L(M, \Diffop^l(P))$ for all $L\in \mathbb{N}_0^k$.
    Locally, this follows from \eqref{eq:local-form-of-Diffop}, since
    the degree $l$ of the restricted operator
    $\phi(a_1,\ldots,a_k)|_{U}\in \Diffop^l (\mathsf{p}^{-1}(U))$ is
    limited by the degrees of the finitely many $\phi_U^{\alpha_1
      \cdots \alpha_k}$ which do not depend on $a_1, \ldots, a_k$.
    Globally, one shows by appropriately chosen $a_1, \ldots, a_k$
    that there is a uniform bound on the locally defined degrees $l$
    according to \eqref{eq:local-form-of-Diffop}.
\end{proof}

The strategy to compute the differential Hochschild cohomologies
includes three steps. First we observe that the Hochschild complex can
be localized. In a second step we consider a surjective submersion
$\mathsf{pr}_1: V \times G\longrightarrow V$ with a manifold $G$ and
an open convex subset $V \subseteq \mathbb{R}^n$.  For that case we
will be able to compute the Hochschild cohomologies using the
techniques developed in Section~\ref{sec:homological}.  Using a
partition of unity, we show that this result is sufficient to compute
the Hochschild cohomologies we started with.

By the constant rank theorem for any point $u \in P$ there exist open
subsets $\tilde{U} \subset P$ with $u \in \tilde{U}$ and $U \subset M$
with $p = \mathsf{p} (u) \in U$ together with diffeomorphisms $x: U
\longrightarrow V \subset \mathbb{R}^n$ and $\tilde{x}: \tilde{U}
\longrightarrow V \times G \subset \mathbb{R}^{n+k}$, such that
$\mathsf{p} (\tilde{U}) = U$ and $x \circ \mathsf{p} \circ
\tilde{x}^{-1} = \mathsf{pr}_1$. Clearly, in this case
$G\subset\mathbb{R}^k$ is an open subset. Furthermore, it is possible
to achieve that $V \subset \mathbb{R}^n$ is convex. With such adapted
local charts $(\tilde{U}, \tilde{x})$ of $P$ and $(U, x)$ of $M$,
which we will use throughout, we have the commutative diagram
\begin{equation}
    \label{eq:diagram-surjective-submersion}
    \xymatrix{P \ar[d]_{\mathsf{p}}&& \mathsf{p}^{-1}(U) \ar
      @{_{(}->}[ll]_{\iota} 
      \ar[d]_{\mathsf{p}} && \tilde{U} \ar @{_{(}->}[ll]_{\iota}
      \ar[d]_{\mathsf{p}} \ar[rr]^{\tilde{x}}&& **[r] V\times G\subset
      \mathbb{R}^{n+k} 
      \ar[d]_{\mathsf{pr}_1}\\
      M && U \ar @{_{(}->}[ll]_{\iota}\ \ar @{=}[rr] && U \ar[rr]^x &&
      **[r] V\subset \mathbb{R}^n,
    }
\end{equation}
where $\iota$ in every case denotes the embedding map. Every column
of this diagram is again the diagram of a surjective submersion with a
canonical right module structure for the corresponding algebras of
smooth functions.
\begin{lemma}
    \label{lemma:localization}
    The restriction maps and local charts give rise to chain maps
    \begin{equation}
        \label{eq:localization}
        \xymatrix{\mathrm{HC}^\bullet_{\mathrm{diff}} (M,\Diffop(P))
          \ar[d] & \\ 
        \mathrm{HC}^\bullet_{\mathrm{diff}}
        (U,\Diffop(\mathsf{p}^{-1}(U))) 
        \ar[d] & \\
        \mathrm{HC}^\bullet_{\mathrm{diff}} (U,\Diffop(\tilde{U}))
        \ar[r]^(.45){\cong} &
        \mathrm{HC}^\bullet_{\mathrm{diff}} (V,\Diffop(V\times G)).
        }
    \end{equation}
\end{lemma}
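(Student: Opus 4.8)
The plan is to construct the three maps as pull-backs of restriction and chart maps, check they are chain maps with respect to $\delta$, and verify that the bottom arrow is an isomorphism. First I would treat the top vertical arrow. Given a cochain $\phi \in \HCdiff^k(M,\Diffop(P))$, I define its restriction by $\phi|_U(a_1,\dots,a_k) := \phi(\tilde a_1,\dots,\tilde a_k)|_{\mathsf{p}^{-1}(U)}$, where the key point is that this is \emph{independent} of the chosen extensions $\tilde a_i \in C^\infty(M)$ of $a_i \in C^\infty(U)$. This independence is precisely the content of the locality built into Definition~\ref{definition:differential-maps}: by Lemma~\ref{lemma:local-form-of-Diffops}, on $\mathsf{p}^{-1}(U)$ the operator $\phi(a_1,\dots,a_k)$ depends only on the finitely many partial derivatives $\partial^{|\alpha_i|}a_i/\partial x^{\alpha_i}$ up to the (uniform, by Lemma~\ref{lemma:Hochschild-complex-simple}) order $l_i$, hence only on the germs, in fact only on the jets, of the $a_i$ along $U$. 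Using a bump function supported near any prescribed point one sees that the $\phi_U^{\alpha_1\cdots\alpha_k}$ are genuinely well-defined differential operators on $\mathsf{p}^{-1}(U)$, so $\phi|_U \in \HCdiff^k(U,\Diffop(\mathsf{p}^{-1}(U)))$. That $\phi \mapsto \phi|_U$ commutes with $\delta$ is immediate because the Hochschild differential only involves pointwise multiplication by the $a_i$ (via $\rho_0$) and composition, both of which are compatible with restriction to $\mathsf{p}^{-1}(U)$.

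For the middle vertical arrow I repeat the same argument one level down: a cochain on $U$ with values in $\Diffop(\mathsf{p}^{-1}(U))$ is restricted to $\tilde U \subseteq \mathsf{p}^{-1}(U)$ simply by restricting the values, i.e.\ the output differential operators, to the open subset $\tilde U$. Since restricting a differential operator on $\mathsf{p}^{-1}(U)$ to an open subset $\tilde U$ again yields a differential operator, and the differential dependence on the $a_i$ is unaffected, this lands in $\HCdiff^k(U,\Diffop(\tilde U))$; again compatibility with $\delta$ is clear since $\delta$ is defined through $\rho_0$ and composition, which localize. The bottom horizontal arrow is induced by the diffeomorphisms $x\colon U \to V$ and $\tilde x\colon \tilde U \to V\times G$ from diagram \eqref{eq:diagram-surjective-submersion}: since $x\circ\mathsf{p}\circ\tilde x^{-1} = \mathsf{pr}_1$, pulling back functions and differential operators along $x$ and $\tilde x$ intertwines the two undeformed right module structures $\rho_0$, hence the two Hochschild differentials, and the map $\phi \mapsto (\tilde x^{-1})^*\circ\phi\circ(x^*,\dots,x^*)$ is a bijection with obvious inverse (push-forward along the same diffeomorphisms). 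A diffeomorphism sends differential operators of multi-order $L$ to differential operators of multi-order $L$, so this identification respects the differential subcomplexes, giving the asserted isomorphism of complexes.

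I do not expect any serious obstacle here: the lemma is essentially bookkeeping, and its proof is the standard ``differential operators are local, hence restrict to opens and transform naturally under diffeomorphisms'' argument, now carried out for multidifferential cochains with values in $\Diffop(P)$. The only point that requires a little care — and is the closest thing to a genuine step rather than a triviality — is the well-definedness of the top restriction map: one must know that the output of $\phi$ on $\mathsf{p}^{-1}(U)$ depends only on the restrictions $a_i|_U$, which uses the \emph{uniform} bound on the orders of differentiation furnished by Lemma~\ref{lemma:Hochschild-complex-simple} together with the local normal form \eqref{eq:local-form-of-Diffop}. Everything else — compatibility with $\delta$, preservation of the differential property, bijectivity of the chart-induced map — follows by direct inspection of the definitions, so the proof is a short paragraph of verifications rather than a computation.
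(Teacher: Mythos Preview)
Your proposal is correct and is precisely the routine verification the paper has in mind; in fact the paper states Lemma~\ref{lemma:localization} without proof, treating it as evident from the locality of multidifferential operators. Your identification of the only nontrivial point---well-definedness of the restriction via Lemma~\ref{lemma:local-form-of-Diffops} and the uniform order bound from Lemma~\ref{lemma:Hochschild-complex-simple}---is exactly right, and the rest is indeed bookkeeping.
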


In the second step we now compute the Hochschild cohomology
$\mathrm{HH}^\bullet_{\mathrm{diff}} (V, \Diffop(V\times G))$.
\begin{lemma} 
    Proposition~\ref{proposition:isomorphism-Hochschild-bar} can be
    applied for $\Diffop (V \times G)$. Hence we have
    \begin{equation}
        \label{eq:isomorphism-Hochschild-bar-surj-sub}
        \mathrm{HC}^\bullet_{\mathrm{diff}} (V, \Diffop(V\times G))
        \cong 
        \Union_{l=0}^\infty \Hom_{\mathcal{A}^e}^{\mathrm{diff}}
        (X_\bullet, \Diffop^l(V\times G))
    \end{equation}
    and
    \begin{equation}
        \label{eq:equivalence-Hochschild-Koszul-cohomology}
        \mathrm{HH}^\bullet_{\mathrm{diff}} (V,\Diffop(V\times G))
        \cong  
        \mathrm{H} \left(
            \Union_{l=0}^\infty \Hom_{\mathcal{A}^e}
            (K_\bullet,\Diffop^l(V\times G))
        \right).
    \end{equation}
\end{lemma}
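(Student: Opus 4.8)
The plan is to check that the filtered $\mathcal{A} = C^\infty(V)$-module
\[
\mathcal{M}^\bullet = \Diffop^\bullet(V \times G) = \Union_{l=0}^\infty \mathcal{M}^l,
\qquad \mathcal{M}^l = \Diffop^l(V\times G),
\]
satisfies all hypotheses of Proposition~\ref{proposition:isomorphism-Hochschild-bar}, where the $\mathcal{A}$-left action on $\mathcal{M}^\bullet$ is the one induced by $\rho_0$ in the sense of Example~\ref{example:DifferentialDeformations} and \eqref{eq:FunnyBimoduleStructure}, i.e.\ $a \cdot D = \rho_0(a) \circ D$ (post-composition with multiplication by $\mathsf{pr}_1^* a$), and the relevant $\mathcal{A}$-right action is $D \cdot b = D \circ \rho_0(b)$. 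Once these hypotheses are in place, \eqref{eq:isomorphism-Hochschild-bar-surj-sub} is exactly part~\textit{ii.)} of that proposition, and \eqref{eq:equivalence-Hochschild-Koszul-cohomology} follows from part~\textit{iii.)} together with the level-wise three-term chain of isomorphisms \eqref{eq:equivalence-Hochschild-Hom-Koszul-cohomology}, since forming the union over $l$ commutes with passing to cohomology.

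First I would settle the topological bookkeeping. In the global chart, each $\mathcal{M}^l = \Diffop^l(V\times G)$ is identified, through its finitely many coefficient functions, with a closed subspace of a finite product of copies of the Fr\'echet space $C^\infty(V\times G)$, hence is itself Hausdorff and complete; moreover the inclusion $\mathcal{M}^l \hookrightarrow \mathcal{M}^{l+1}$ is a topological embedding onto a closed subspace, so the topologies respect the filtration. Left multiplication by $\rho_0(a)$ is continuous and preserves the order, so it maps $\mathcal{M}^l$ into $\mathcal{M}^l$; thus each $\mathcal{M}^l$ is a Hausdorff and complete topological $\mathcal{A}$-left module with $\mathcal{A} \cdot \mathcal{M}^l \subseteq \mathcal{M}^l$.

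The one genuinely substantive point --- and the step I expect to be the main (though still elementary) obstacle --- is condition \eqref{eq:right-module-structure-differential}: the right action $D \mapsto D \cdot b = D \circ \rho_0(b)$ must be differential of a \emph{fixed} order with coefficients depending continuously on $D$, \emph{uniformly} on each $\mathcal{M}^l$. This is where the product structure $P = V \times G$ enters: $\mathsf{pr}_1^* b$ depends only on the base variable, so in the global chart a Leibniz expansion of $D \circ \rho_0(b)$ produces only $v$-derivatives of $b$, and at most $l$ of them when $D$ has order $l$. One obtains
\[
D \cdot b = \sum_{|\beta| \le l} \frac{\partial^{|\beta|} b}{\partial v^\beta} \cdot D^\beta ,
\]
where each $D^\beta \in \Diffop^l(V\times G)$ is a universal $\mathbb{R}$-linear expression in the chart coefficients of $D$ (so $D^\beta$ depends linearly, hence continuously, on $D$), and the bound $l$ is uniform on $\mathcal{M}^l$. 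It is precisely this uniformity that fails on the unfiltered $\Diffop^\bullet(V\times G)$, which is why the \emph{filtered} version of the proposition is needed.

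Finally I would record the compatibility of the two notions of ``differential cochain'' at play: on each $\mathcal{M}^l$ the differential cochains in the sense of Definition~\ref{definition:differential-maps} coincide with the algebraic multidifferential cochains of Example~\ref{example:DifferentialDeformations}. Locally this is the content of Lemma~\ref{lemma:Hochschild-complex-simple} and formula \eqref{eq:local-form-of-Diffop}, which identify $\HCdiff^k(V,\Diffop(V\times G))$ with $\Union_{l=0}^\infty \Union_{L \in \mathbb{N}_0^k} \Diffop^L(V, \Diffop^l(V\times G))$. With this identification, Proposition~\ref{proposition:isomorphism-Hochschild-bar}~\textit{ii.)} yields \eqref{eq:isomorphism-Hochschild-bar-surj-sub}, and inserting the level-wise isomorphisms \eqref{eq:equivalence-Hochschild-Hom-Koszul-cohomology} into part~\textit{iii.)} and taking unions over $l$ yields \eqref{eq:equivalence-Hochschild-Koszul-cohomology}.
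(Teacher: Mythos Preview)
Your proposal is correct and follows essentially the same approach as the paper's own proof: verify the topological hypotheses on each $\Diffop^l(V\times G)$ using the natural Fr\'echet topology, check condition~\eqref{eq:right-module-structure-differential} via the Leibniz rule, and invoke Lemma~\ref{lemma:local-form-of-Diffops}/Remark~\ref{remark:local-form-of-Diffops} to reconcile the two notions of differential cochain. Your write-up is simply a more detailed unpacking of what the paper compresses into three sentences.
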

\begin{proof}
    For $\Diffop(V\times G)=\bigcup_{l=0}^\infty \Diffop^l(V\times G)$
    the preconditions of Proposition
    \ref{proposition:isomorphism-Hochschild-bar} are easily checked
    using the natural Fr{\'e}chet topology and the given
    $(\mathcal{A}, \mathcal{A})$-bimodule structure
    \eqref{eq:FunnyBimoduleStructure} of $\Diffop^l (V\times G)$.
    Condition~\eqref{eq:right-module-structure-differential} is a
    simple consequence of the Leibniz rule. By
    Lemma~\ref{lemma:local-form-of-Diffops} and
    Remark~\ref{remark:local-form-of-Diffops} the algebraic notion of
    a differential operator and the one of
    Definition~\ref{definition:differential-maps} coincide. 
\end{proof}
Note that in particular the notions of $\HCdiff$ as in
Example~\ref{example:DifferentialDeformations} and
Section~\ref{sec:homological} coincide in this case justifying thereby
our previous abuse of notation.

The crucial step for all further considerations is the following
explicit computation of the cohomology of $\bigcup_{l=0}^\infty
\Hom_{\mathcal{A}^e} (K_\bullet, \Diffop^l (V \times G))$.
\begin{remark}
    \label{remark:symbol-calculus}
    In the following we make use of the well-known symbol calculus for
    differential operators. Depending on a torsion-free covariant
    derivative $\nabla$ on a manifold $M$, every differential operator
    $D \in \Diffop^l(M)$ can be identified with a unique series
    $T_0, \ldots, T_l$ of symmetric multi-vector fields $T_j \in
    \Gamma^\infty(S^j TM)$, $j= 0,\ldots ,l$, which yield
    $D=\sum_{j=0}^l D_{T_j}$. Here,
    \begin{equation}
        \label{eq:symbol-calculus}
        D_{T_j}(a)=\frac{1}{j!} 
        \left\langle T_j, \mathsf{D}^{(j)} a \right\rangle
        \quad \forall a \in C^\infty(M),
    \end{equation}
    where $\langle \cdot, \cdot \rangle$ denotes the natural pairing
    of symmetric multi-vector fields with symmetric differential forms
    and $\mathsf{D}^{(j)}= \frac{1}{j!}\mathsf{D}^j$ denotes the
    $j$-fold symmetrized covariant derivative, which in local
    coordinates is given by $\mathsf{D}= \mathrm{d} x^i \vee
    \nabla_{\frac {\partial} {\partial x^i}}$. Finally,
    $\sigma(D)=T_0+ \cdots + T_l\in \bigoplus_{r=0}^\infty
    \Gamma^\infty(S^rTM)$ is called the symbol of $D$ with respect to
    $\nabla$.
\end{remark}

In the present situation we can use the product structure of $V \times
G$ and the fact that $V \subset \mathbb{R}^n$ to find that every
operator $D \in \Diffop^l (V\times G)$ can be written in the form
\begin{equation}
    \label{eq:diffop-on-product-manifold}
    Df = 
    \sum_{r = 0}^l
    \underbrace{
      \sum_{s = r}^l T^{i_1\ldots i_r}_{j_1\ldots j_{s-r}}
      \left\langle 
          X^{j_1} \vee \cdots \vee X^{j_{s-r}}, 
          \mathsf{D}_G^{(s-r)} 
          \circ
          \frac{\partial^r f}{\partial x^{i_1} \cdots \partial x^{i_r}}
      \right\rangle
    }_{= D_r},
\end{equation}
where $T^{i_1 \ldots i_r}_{j_1 \ldots j_{s-r}} \in C^\infty (V \times
G)$ and $X^j \in \Gamma^\infty (TG) \subset \Gamma^\infty (T (V \times
G))$. Here, the symmetrized covariant derivative $\mathsf{D}_G$
belongs to a torsion-free covariant derivative $\nabla^G$ on $G$,
which both can be extended to $V \times G$.  Thus we have a
decomposition
\begin{equation}
    \label{eq:decomposition-diffops-on-product}
    \Diffop^l(V\times G) = \bigoplus_{r=0}^l \Diffop^l_r(V\times G)
\end{equation}
where $D_r\in \Diffop^l_r(V\times G)$ has the form as in
\eqref{eq:diffop-on-product-manifold}. By linear extension of
\begin{equation}
    \label{eq:degree}
    \deg D_r= r D_r \quad \forall D_r\in \Diffop^l_r(V\times G)
\end{equation}
we obtain a map $\deg: \Diffop^\bullet (V\times G)\longrightarrow
\Diffop^\bullet(V\times G)$. With the decomposition
\eqref{eq:decomposition-diffops-on-product} we get
\begin{equation}
    \label{eq:decomposition-Hom-Koszul-complex}
    \Hom_{\mathcal{A}^e} (K_\bullet,\Diffop^l(V\times G))=
    \bigoplus_{r=0}^l \Hom_{\mathcal{A}^e}
    (K_\bullet,\Diffop^l_r(V\times G)) 
\end{equation}
for all $l \in \mathbb{N}$. Thus we can consider the corresponding map
$\deg: \bigcup_{l=0}^\infty \Hom_{\mathcal{A}^e} (K_\bullet, \Diffop^l
(V \times G)) \longrightarrow \bigcup_{l=0}^\infty
\Hom_{\mathcal{A}^e} (K_\bullet, \Diffop^l (V\times G))$ which is
again defined by linear extension of
\begin{equation}
    \label{eq:degree-Hom}
    \deg \psi =r \psi \quad \forall \psi \in
    \Hom_{\mathcal{A}^e}(K_\bullet,\Diffop^l_r(V\times G)).
\end{equation}
Clearly, $(\deg \psi)(\omega)= \deg (\psi(\omega))$ for all $\psi \in
\Hom_{\mathcal{A}^e} (K_k, \Diffop^l (V\times G))$, and $\omega \in
K_k$ with $l, k \in \mathbb{N}_0$. The following lemma is obvious.
\begin{lemma}
    \label{lemma:coordinate-functions}
    Let $D \in \Diffop (V\times G)$ and $x^i: V \longrightarrow
    \mathbb{R}$ be the coordinate functions with respect to the
    canonical basis $\{e_i\}_{i = 1, \ldots, n}$ of $\mathbb{R}^n$.
    Moreover, define $\xi^i = x^i \otimes 1 - 1 \otimes x^i \in
    C^\infty(V \times V)$. Then
    \begin{equation}
        \label{eq:degD}
        \sum_{j=1}^n 
        \left(
            (D \cdot x^j) \circ \frac{\partial}{\partial x^j} 
            - (x^j \cdot D) \circ \frac{\partial}{\partial x^j}
        \right) 
        =
        \deg D
    \end{equation}
    and
    \begin{equation}
        \sum_{j=1}^n 
        \left( 
            \left(
                (-\xi^j) \cdot D
            \right)
            \circ \frac{\partial}{\partial x^j}
        \right)
        =
        \deg D.
    \end{equation}
\end{lemma}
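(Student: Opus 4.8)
The plan is to unwind the various module structures, observe that the two displayed sums are literally the same operator, and then reduce to the Leibniz rule. Throughout, $\rho_0(x^j)$ denotes the operator of multiplication by $\mathsf{pr}_1^* x^j$ on $C^\infty(V\times G)$. Recalling from Example~\ref{example:DifferentialDeformations} that the left and right $C^\infty(V)$-actions on $\mathcal{D}=\Diffop(V\times G)$ are $a\cdot D=\rho_0(a)\circ D$ and $D\cdot b=D\circ\rho_0(b)$, one gets at once $D\cdot x^j-x^j\cdot D=D\circ\rho_0(x^j)-\rho_0(x^j)\circ D=[D,\rho_0(x^j)]$. On the other hand, using \eqref{eq:regularity-condition} together with the bimodule structure \eqref{eq:FunnyBimoduleStructure}, one has $(x^j\otimes 1)\cdot D=\rho_0(x^j)\circ D$ and $(1\otimes x^j)\cdot D=D\circ\rho_0(x^j)$, hence $\xi^j\cdot D=\rho_0(x^j)\circ D-D\circ\rho_0(x^j)=-[D,\rho_0(x^j)]$, so that $(-\xi^j)\cdot D=[D,\rho_0(x^j)]$ as well. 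Thus both claimed identities amount to the single statement
\[
    \sum_{j=1}^n [D,\rho_0(x^j)]\circ\frac{\partial}{\partial x^j}=\deg D ,
\]
and it suffices to prove this one equation.

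For that, I would use the product structure of $V\times G$: in an adapted chart every $D\in\Diffop(V\times G)$ can be written as $D=\sum_\alpha P_\alpha\circ\partial_x^\alpha$, where $\alpha$ runs over multi-indices in the $V$-directions, $\partial_x^\alpha$ is the corresponding iterated partial derivative, and each $P_\alpha$ is a differential operator in the $G$-directions with coefficients in $C^\infty(V\times G)$ — this is exactly the decomposition \eqref{eq:decomposition-diffops-on-product}, with $P_\alpha\circ\partial_x^\alpha\in\Diffop^\bullet_{|\alpha|}(V\times G)$, so that by \eqref{eq:degree} the summand $P_\alpha\circ\partial_x^\alpha$ contributes $|\alpha|\,P_\alpha\circ\partial_x^\alpha$ to $\deg D$. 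Since $P_\alpha$ is built from $G$-derivatives and multiplication operators, it commutes with $\rho_0(x^j)$, whence $[D,\rho_0(x^j)]=\sum_\alpha P_\alpha\circ[\partial_x^\alpha,\rho_0(x^j)]$.

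The core is then the elementary commutator identity $[\partial_x^\alpha,\rho_0(x^j)]=\alpha_j\,\partial_x^{\alpha-e_j}$ (with $e_j$ the multi-index carrying a $1$ in the $j$-th slot, the term being $0$ when $\alpha_j=0$), which follows from the one-variable relation $[\partial^m/\partial(x^j)^m,\rho_0(x^j)]=m\,\partial^{m-1}/\partial(x^j)^{m-1}$ and the fact that the remaining factors $\partial/\partial x^k$, $k\neq j$, commute with $\rho_0(x^j)$. Composing with $\partial/\partial x^j$ and summing over $j$ yields $\sum_j[\partial_x^\alpha,\rho_0(x^j)]\circ\partial/\partial x^j=\sum_j\alpha_j\,\partial_x^\alpha=|\alpha|\,\partial_x^\alpha$, and therefore
\[
    \sum_{j=1}^n [D,\rho_0(x^j)]\circ\frac{\partial}{\partial x^j}
    =\sum_\alpha |\alpha|\,P_\alpha\circ\partial_x^\alpha=\deg D ,
\]
which is what we wanted. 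As the paper already indicates, there is no real obstacle here: the lemma is ``obvious'' once the setup is in place, and the only point that deserves a moment's care is bookkeeping — keeping track of which action (the left/right $C^\infty(V)$-action versus the $\mathcal{A}^e$-action from \eqref{eq:regularity-condition}) enters each of the two formulas, and noting that both collapse to the same commutator expression, so the single Leibniz computation above settles them simultaneously.
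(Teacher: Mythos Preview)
Your proof is correct and is precisely the sort of direct computation the paper has in mind when it declares the lemma ``obvious'' without further argument. You correctly unwind the bimodule conventions \eqref{eq:FunnyBimoduleStructure} and \eqref{eq:regularity-condition} to see that both identities collapse to $\sum_j [D,\rho_0(x^j)]\circ\partial/\partial x^j=\deg D$, and the Leibniz computation with the decomposition \eqref{eq:diffop-on-product-manifold} settles this cleanly.
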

\begin{definition}
    \label{definition:homotopy-maps-Koszul}
    We define the map
    \begin{equation}
        \label{eq:homotopy-map}
        \delta_K^{-1}:
        \Union_{l=0}^\infty \Hom_{\mathcal{A}^e} 
        (K_\bullet, \Diffop^l (V \times G))
        \longrightarrow 
        \Union_{l=0}^\infty \Hom_{\mathcal{A}^e}
        (K_{\bullet-1}, \Diffop^l (V \times G)) 
    \end{equation}
    by the linear extension of the maps
    \begin{equation}
        \label{eq:homotopy-map-explicitly}
        (\delta_K^{-1})^k \psi
        =
        \begin{cases}
            \frac{1} {k+r} (\delta_K^*)^k\psi 
            &
            \textrm{for} \; k+r \neq 0 \\
            0 
            &
            \textrm{for} \;  k=0=r
        \end{cases}
    \end{equation}    
    for $\psi \in \Hom_{\mathcal{A}^e}(K_k,\Diffop^l_r(V\times G))$
    and $k\ge 0$, where the $\mathcal{A}^e$-linear map
    \begin{equation}
        \label{eq:deltastarDef}
        (\delta_K^*)^k: 
        \Union_{l=0}^\infty \Hom_{\mathcal{A}^e} 
        (K_k, \Diffop^l (V \times G)) 
        \longrightarrow 
        \Union_{l=0}^\infty \Hom_{\mathcal{A}^e}
        (K_{k-1}, \Diffop^l (V \times G))
    \end{equation}
    is defined by
    \begin{equation}
        \label{eq:homotopy-map-Hom-Koszul}
        ((\delta_K^*)^k\psi) 
        (e^{i_1} \wedge \cdots \wedge e^{i_{k-1}})
        = 
        - \sum_{j=1}^n
        \psi (e^j \wedge e^{i_1} \wedge \cdots \wedge e^{i_{k-1}})
        \circ \frac{\partial}{\partial x^j}.
    \end{equation}
\end{definition}
\begin{remark}
    \label{remark:Koszul-homotopy-diff}
    Note that by definition we have
    \begin{equation}
        \label{eq:homotopy-degrees}
        \delta_K^{-1}:
        \Hom_{\mathcal{A}^e}(K_\bullet,\Diffop^l_r(V\times G)) 
        \longrightarrow
        \Hom_{\mathcal{A}^e}(K_{\bullet-1},\Diffop^{l+1}_{r+1}(V\times
        G))
        \quad \forall r\le l\in \mathbb{N}_0.
    \end{equation}
\end{remark}
These maps turn out to be explicit homotopies for the Koszul complex:
\begin{proposition}
    For $1 \le k \in \mathbb{N}$ the map $\delta_K^{-1}$ yields an
    explicit homotopy for the Koszul complex $\bigcup_{l=0}^\infty
    \Hom_{\mathcal{A}^e} (K_\bullet,\Diffop^l(V\times G))$, i.e.
    \begin{equation}
        \label{eq:homotopy}
        \delta_K^{k-1}\circ (\delta_K^{-1})^k +
        (\delta_K^{-1})^{k+1}\circ \delta_K^k  =
        \id.
    \end{equation}
    It follows that the Koszul cohomology is trivial for $k \ge 1$, i.e.
    \begin{equation}
        \label{eq:Koszul-cohomology-vanishes}
        \mathrm{H} \left(
            \Union_{l=0}^\infty
            \Hom_{\mathcal{A}^e}(K_k, \Diffop^l (V \times G))
        \right)
        = \{0\}.
    \end{equation}
\end{proposition}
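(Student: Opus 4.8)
The plan is to exploit the $\deg$-grading from \eqref{eq:degree}: since both $\delta_K$ and the maps $(\delta_K^*)^k$ act componentwise with respect to the decomposition \eqref{eq:decomposition-Hom-Koszul-complex}, it suffices to work on each homogeneous summand $\Hom_{\mathcal{A}^e}(K_k,\Diffop^l_r(V\times G))$ separately and to prove there the sharper identity $\delta_K^{k-1}\circ(\delta_K^*)^k + (\delta_K^*)^{k+1}\circ\delta_K^k = (k+r)\,\id$. Granting this, the homotopy \eqref{eq:homotopy} follows at once by dividing by $k+r$; this is precisely why the normalisation $1/(k+r)$ was built into \eqref{eq:homotopy-map-explicitly}. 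For $k\ge 1$ and $r\ge 0$ one has $k+r\ge 1$, so no division by zero occurs, and because by Remark~\ref{remark:Koszul-homotopy-diff} the map $\delta_K^k$ lowers $r$ by one while $(\delta_K^*)^k$ raises it by one, the scalar relevant to the composition $(\delta_K^{-1})^{k+1}\circ\delta_K^k$ is again $1/(k+r)$, matching the one in $\delta_K^{k-1}\circ(\delta_K^{-1})^k$. (If $r=0$ the operator $\delta_K^k$ vanishes on $\Hom_{\mathcal{A}^e}(K_k,\Diffop^l_0(V\times G))$, since the $\mathcal{A}^e$-action entering $\delta_K$ is by multiplication operators in the $V$-variables and these commute with purely vertical operators, so the identity degenerates harmlessly.)

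To prove the sharper identity I would evaluate both compositions on the free $\mathcal{A}^e$-generators $e^{i_1}\wedge\cdots\wedge e^{i_k}$ of $K_k$. Writing $\partial_j=\frac{\partial}{\partial x^j}$, recalling that $\delta_K^{k-1}=(\partial_K^k)^*$ is dual to the contraction of Koszul forms against $\xi=\sum_j\xi^je_j$, and using \eqref{eq:FunnyBimoduleStructure} together with \eqref{eq:regularity-condition} to identify the $\mathcal{A}^e$-action of the coordinate difference $\xi^j$ on an operator $D$ as the commutator $[M_{x^j},D]$ with the multiplication operator $M_{x^j}$, one expands everything into finite sums over Koszul indices. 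Two kinds of terms arise. The `diagonal' term is the one in which removing the covector $e^j$ that $(\delta_K^*)^{k+1}$ prepends produces the coefficient $\xi^j$, which then pairs with the $\partial_j$: it assembles to $\sum_j[D,M_{x^j}]\circ\partial_j$ acting on the value $D=\psi(e^{i_1}\wedge\cdots\wedge e^{i_k})$, hence by Lemma~\ref{lemma:coordinate-functions} equals $\deg D=r\,\psi(e^{i_1}\wedge\cdots\wedge e^{i_k})$. The remaining `off-diagonal' terms from the two compositions agree except that in one the $\partial_j$ sits inside the commutator with $M_{x^{i_a}}$ and in the other outside it; by the Leibniz rule and the elementary relation $[M_{x^{i_a}},\partial_j]=-\delta^{i_a}_j\,\id$ their difference collapses to $-\sum_a(-1)^a\psi(e^{i_a}\wedge e^{i_1}\wedge\cdots\widehat{e^{i_a}}\cdots\wedge e^{i_k})$, and after the reordering $e^{i_a}\wedge e^{i_1}\wedge\cdots\widehat{e^{i_a}}\cdots=(-1)^{a-1}e^{i_1}\wedge\cdots\wedge e^{i_k}$ this equals $k\,\psi(e^{i_1}\wedge\cdots\wedge e^{i_k})$. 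Adding the two contributions gives $(k+r)\psi$. Conceptually this is just the Cartan magic formula for the Koszul differential, transported through the symbol calculus of Remark~\ref{remark:symbol-calculus}: `multiplication by $\xi^j$' becomes `commutator with $M_{x^j}$', and the Euler operator that would normally measure the form degree is replaced by $k+\deg$.

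Finally, the vanishing \eqref{eq:Koszul-cohomology-vanishes} is an immediate corollary: for $k\ge 1$ any $\delta_K$-closed $\psi$ satisfies $\psi=\delta_K^{k-1}\big((\delta_K^{-1})^k\psi\big)$ by \eqref{eq:homotopy}, and $(\delta_K^{-1})^k\psi$ still lies in $\bigcup_l\Hom_{\mathcal{A}^e}(K_{k-1},\Diffop^l(V\times G))$ by Remark~\ref{remark:Koszul-homotopy-diff}, so $\psi$ is a coboundary. The only genuinely delicate part of all this is the bookkeeping — the signs from the wedge products and the simultaneous tracking of the form degree $k$, the differential order $l$, and the $\deg$-grading $r$ through $\delta_K$, the $(\delta_K^*)^k$, and the commutators — while the underlying mechanism is simply the standard chain-level computation of a contracting homotopy.
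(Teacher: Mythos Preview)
Your proof is correct and follows essentially the same route as the paper: both arrive at the identity
\[
\delta_K^{k-1}\circ(\delta_K^*)^k + (\delta_K^*)^{k+1}\circ\delta_K^k \;=\; \deg + k\,\id
\]
on $\bigcup_l\Hom_{\mathcal{A}^e}(K_k,\Diffop^l(V\times G))$, which on each $r$-homogeneous summand is your $(k+r)\,\id$, and then divide by $k+r$. The paper compresses your diagonal/off-diagonal split into the single reference to Lemma~\ref{lemma:coordinate-functions}, while you spell out explicitly how the commutator $[M_{x^{i_a}},\partial_j]=-\delta^{i_a}_j$ produces the $k$-term and how $\sum_j[D,M_{x^j}]\circ\partial_j=\deg D$ gives the $r$-term. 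One small correction: Remark~\ref{remark:Koszul-homotopy-diff} only records the effect of $\delta_K^{-1}$ on the $r$-grading, not that of $\delta_K^k$; your claim that $\delta_K^k$ lowers $r$ by one is correct (since $\xi^j\cdot D=[M_{x^j},D]$ strips one $V$-derivative), but it needs the brief argument you give in your parenthetical rather than that citation.
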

\begin{proof}
    With use of the maps $\xi^i$ of
    Lemma~\ref{lemma:coordinate-functions} it is easy to see that the
    differentials $\delta_K^k$ can be written as
    \begin{equation}
        \label{eq:differential-Hom-Koszul}
        (\delta_K^k\psi)
        (e^{i_1} \wedge \cdots \wedge e^{i_{k+1}})
        =
        \psi \left(
            \sum_{j=1}^n \xi^j \ins_{\mathrm{a}}(e_j) 
            (e^{i_1}\wedge \cdots \wedge e^{i_{k+1}}) 
        \right) \quad \forall i_s \in \{1,\ldots, n\},
    \end{equation}
    where $\ins_{\mathrm{a}}$ denotes the insertion map for the
    antisymmetric forms of $\mathbb{R}^n$.
    Lemma~\ref{lemma:coordinate-functions} leads to
    \begin{equation}
        \label{eq:homotopy-deg}
        \delta_K^{k-1}\circ (\delta_K^*)^k + (\delta_K^*)^{k+1}\circ
        \delta_K^k  = \deg + k \id,
    \end{equation}
    which finally yields \eqref{eq:homotopy}. The last statement is an
    immediate consequence.
\end{proof}
\begin{theorem}[Hochschild cohomology, local situation]
    \label{theorem:local-Hochschild-cohomologies-surjective-submersions}
    Let $V \subset \mathbb{R}^n$ be an open and convex subset and let
    $G$ be a manifold. Then we have for all $1 \le k \in \mathbb{N}$: 
    \begin{enumerate}
    \item The $k$-th differential Hochschild cohomology with values in
        $\Diffop(V \times G)$ is trivial
        \begin{equation}
            \label{eq:cohomology-groups-local}
            \mathrm{HH}^k_{\mathrm{diff}} (V,\Diffop(V\times G))= \{0\}.
        \end{equation}
    \item There exists an explicit homotopy
        \begin{equation}
            \label{eq:explicit-homotopy}
            \delta^{k-1} \circ (\delta^{-1})^k+ (\delta^{-1})^{k+1}
            \circ \delta^k=\id,
        \end{equation}
        where the maps $(\delta^{-1})^k: \HCdiff^k (V, \Diffop(V
        \times G)) \longrightarrow \HCdiff^{k-1} (V, \Diffop (V \times
        G))$ are given by
        \begin{equation}
            \label{eq:explicit-homotopy-map}
            (\delta^{-1})^k
            = \Xi \circ 
            \left(
                (G^{k-1})^* \circ (\delta_K^{-1})^{k} \circ (F^k)^* 
                + (s^{k-1})^*
            \right) \circ \Xi^{-1}.
        \end{equation}
    \item In particular, for any multi-index $L = (l_1, \ldots,
        l_k) \in \mathbb{N}_0^k$ we have
        \begin{equation}
            \label{eq:explicit-homotopy-map-diff}
            \delta^{-1}:
            \Diffop^L (V, \Diffop^l (V \times G))
            \longrightarrow \Diffop^{\tilde{L}} (V, \Diffop^{l+1} (V\times G)),
        \end{equation} 
        where the new multi-index $\tilde{L} = (\tilde{l}_1, \ldots,
        \tilde{l}_{k-1}) \in \mathbb{N}_0^{k-1}$ is given by
        \begin{equation}
            \label{eq:new-multi-index}
            \tilde{l}_i = \max \{l_i, l+2\} \quad \forall i = 1, \ldots, k-1.
        \end{equation}
    \end{enumerate}
\end{theorem}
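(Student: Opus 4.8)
The plan is to transport the contracting homotopy $\delta_K^{-1}$ of the Koszul side to the differential Hochschild complex along the explicit chain maps and homotopies of Section~\ref{sec:homological}, using the isomorphism \eqref{eq:equivalence-Hochschild-Koszul-cohomology} of the preceding lemma together with the filtered version of Proposition~\ref{proposition:isomorphism-Hochschild-bar}. For part \textit{ii.)} I would first work on the bar complex: set $(\hat{\delta}^{-1})^k = (G^{k-1})^*\circ(\delta_K^{-1})^k\circ(F^k)^*$ on $\Hom_{\mathcal{A}^e}^{\mathrm{diff}}(X_k,\Diffop(V\times G))$. Since $F$ and $G$ are chain maps, \eqref{eq:chain-maps}, their pull-backs intertwine $\delta_X$ and $\delta_K$; combining this with the Koszul homotopy identity \eqref{eq:homotopy} and the relations \eqref{eq:chain-maps-id}, \eqref{eq:chain-maps-projection} gives, for $k\ge 1$,
\begin{align*}
\delta_X^{k-1}\circ(\hat{\delta}^{-1})^k + (\hat{\delta}^{-1})^{k+1}\circ\delta_X^k
&= (G^k)^*\circ\bigl(\delta_K^{k-1}\circ(\delta_K^{-1})^k + (\delta_K^{-1})^{k+1}\circ\delta_K^k\bigr)\circ(F^k)^* \\
&= (G^k)^*\circ(F^k)^* = (\Theta^k)^*.
\end{align*}
Dualizing the homotopy \eqref{eq:id-Theta-homotopic} between $\id_X$ and $\Theta$, and using \eqref{eq:map-for-id-Theta-homotopic}, yields $\id - (\Theta^k)^* = \delta_X^{k-1}\circ(s^{k-1})^* + (s^k)^*\circ\delta_X^k$. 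Adding the two identities produces a contracting homotopy on $\Hom_{\mathcal{A}^e}^{\mathrm{diff}}(X_\bullet,\Diffop(V\times G))$ with operator $(\hat{\delta}^{-1})^k + (s^{k-1})^*$. Finally, conjugating by the isomorphism of complexes $\Xi$, \eqref{eq:chain-homomorphism-Hom-bar-Hochschild-complex}, turns this operator into exactly \eqref{eq:explicit-homotopy-map} and the identity into \eqref{eq:explicit-homotopy}, which is part \textit{ii.)}. Part \textit{i.)} is then immediate, since a complex carrying a contracting homotopy in positive degrees has vanishing cohomology there.

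For part \textit{iii.)} I would follow $(\delta^{-1})^k$ through its building blocks, tracking the two orders. Starting from $\psi = \Xi^{-1}\phi \in \Hom_{\mathcal{A}^e}^{\mathrm{diff},L}(X_k,\Diffop^l(V\times G))$: pre-composition with $F^k$ does not change the operator-order of the values, so $(F^k)^*\psi$ is valued in $\Diffop^l(V\times G)$; by Remark~\ref{remark:Koszul-homotopy-diff} the Koszul homotopy raises this order by at most one, so $(\delta_K^{-1})^k(F^k)^*\psi$ lies in $\Hom_{\mathcal{A}^e}(K_{k-1},\Diffop^{l+1}(V\times G))$; and by Proposition~\ref{proposition:restrictions-diff}, $(G^{k-1})^*$ then produces a differential cochain of uniform multi-order $(l+2,\dots,l+2)$ — here the decisive observation is that the uniform right-module order of $\mathcal{M}=\Diffop^{l+1}(V\times G)$ entering \eqref{eq:right-module-structure-differential} is $l+1$, because composing a degree-$(l+1)$ operator with a pulled-back function costs $l+1$ derivatives by the Leibniz rule. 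For the second summand, writing $(s^{k-1})^* = (h_X^{k-1})^*\circ(\id - (\Theta^k)^*)$ and using that $(\Theta^k)^*$ lands in multi-order $(l+1,\dots,l+1)$ (now with module $\Diffop^l(V\times G)$) while $(h_X^{k-1})^*$ merely deletes the last index, \eqref{eq:differential-h-star}, one obtains a cochain still valued in $\Diffop^l(V\times G)\subseteq\Diffop^{l+1}(V\times G)$ of multi-order $\max\{l_i,l+1\}$. Taking the maximum of the two contributions gives $\tilde{l}_i = \max\{l_i,l+2\}$ and operator-order $l+1$, which under $\Xi$ is assertion \eqref{eq:explicit-homotopy-map-diff} with \eqref{eq:new-multi-index}.

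The homotopy identity itself is purely formal once the chain-map and homotopy relations of Section~\ref{sec:homological} are assembled in the right order, so the real work lies in part \textit{iii.)}: keeping precise track of how each of $(F^k)^*$, $(\delta_K^{-1})^k$, $(G^{k-1})^*$ and $(s^{k-1})^*$ shifts the multi-order and the operator-order. The one genuinely subtle point — and the main obstacle I would watch for — is that Proposition~\ref{proposition:restrictions-diff} must be invoked for the module $\Diffop^{l+1}(V\times G)$ rather than $\Diffop^l(V\times G)$ at the moment $(G^{k-1})^*$ is applied, precisely because $(\delta_K^{-1})^k$ has already raised the order; getting this bookkeeping right is what forces the bound $l+2$ in \eqref{eq:new-multi-index} instead of the naive $l+1$.
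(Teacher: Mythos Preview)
Your proposal is correct and follows precisely the paper's own approach: combine the dualized Koszul homotopy \eqref{eq:homotopy} with the dualized homotopy \eqref{eq:id-Theta-homotopic} to obtain \eqref{eq:explicit-homotopy}, then derive part~\textit{i.)} as an immediate consequence, and establish part~\textit{iii.)} by tracking orders through $(s^{k-1})^* = (h_X^{k-1})^*\circ(\id - (\Theta^k)^*)$ using Proposition~\ref{proposition:restrictions-diff} and Remark~\ref{remark:Koszul-homotopy-diff}. Your write-up is in fact more explicit than the paper's terse proof, and you have correctly isolated the one genuinely delicate point, namely that $(G^{k-1})^*$ must be applied with the module $\Diffop^{l+1}(V\times G)$ so that the bound $l+2$ (rather than $l+1$) emerges in \eqref{eq:new-multi-index}.
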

\begin{proof}
    The proof of equation \eqref{eq:explicit-homotopy} is a simple
    computation which makes use of \eqref{eq:homotopy},
    \eqref{eq:id-Theta-homotopic} and the properties of the involved
    functions. Then, equation \eqref{eq:cohomology-groups-local} is
    trivial. With \eqref{eq:map-for-id-Theta-homotopic} we have
    $(s^{k-1})^*= (h_X^{k-1})^*\circ (\id - (G^k)^*\circ (F^k)^*)$.
    Thus, the third assertion \eqref{eq:explicit-homotopy-map-diff} is
    clear with Proposition \ref{proposition:restrictions-diff} and
    Remark \ref{remark:Koszul-homotopy-diff} by counting the orders of
    differentiation, since $\Xi$ does not change any of them.
\end{proof}

In the third and last step we use this local result to compute the
Hochschild cohomologies for arbitrary surjective submersions. For this
purpose we consider the vertical differential operators $D \in
\Diffopver(P)$ which are defined by the condition
\begin{equation}
    \label{eq:vertical-diffops}
    D(f\cdot \mathsf{p}^*a)=
    D(f)\cdot \mathsf{p}^*a \quad \forall a\in
    C^\infty(M), f\in C^\infty(P).
\end{equation}
\begin{theorem}[Hochschild cohomology for surjective submersions]
    \label{theorem:Hochschild-cohomologies-surjective-submersions}
    Let $\mathsf{p}: P\longrightarrow M$ be a surjective submersion.
    Then   
    \begin{equation}
        \label{eq:cohomology-groups-surjective-submersion}
        \mathrm{HH}^k_{\mathrm{diff}} (M,\Diffop(P))
        =
        \begin{cases}
            \Diffopver(P) & \textrm{for} \; k = 0 \\
            \{0\} & \textrm{for} \; k \ge 1.
        \end{cases}
    \end{equation}
    More specifically, every $\phi \in \Diffop^L(M, \Diffop^l(P))$
    with $L = (l_1, \ldots, l_k) \in \mathbb{N}_0^k$, $k \ge 1$, and
    $\delta \phi = 0$ is of the form
    \begin{equation}
        \label{eq:phiExact}
        \phi = \delta \Theta,
    \end{equation}
    where $\Theta \in \Diffop^{\tilde{L}}(M, \Diffop^{l+1}(P))$ and
    $\tilde{L}$ as in \eqref{eq:new-multi-index}.
\end{theorem}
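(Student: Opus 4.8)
The plan is to split into the two cases $k=0$ and $k\ge1$. For $k=0$ one only needs to unwind the definition of $\delta$ on $0$-cochains together with the bimodule structure \eqref{eq:FunnyBimoduleStructure}: for $D\in\Diffop(P)$ one has $(\delta D)(a) = \rho_0(a)\circ D - D\circ\rho_0(a)$, and applying this to $f\in C^\infty(P)$ gives $(\mathsf{p}^*a)\cdot D(f) - D((\mathsf{p}^*a)\cdot f)$. Hence $\delta D=0$ is exactly the vertical operator condition \eqref{eq:vertical-diffops}, so $\mathrm{HH}^0_{\mathrm{diff}}(M,\Diffop(P)) = \Diffopver(P)$, which settles the first line of \eqref{eq:cohomology-groups-surjective-submersion}.

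For $k\ge1$ I would run the announced localize--compute--glue scheme. First cover $P$ by adapted charts $(\tilde U_i,\tilde x_i)$ as in \eqref{eq:diagram-surjective-submersion}, so that $\mathsf{p}|_{\tilde U_i}$ is modelled on $\mathsf{pr}_1\colon V_i\times G_i\to V_i$ with $V_i\subset\mathbb{R}^n$ convex; since $\mathsf{p}$ is surjective the $U_i=\mathsf{p}(\tilde U_i)$ cover $M$. Given a cocycle $\phi\in\Diffop^L(M,\Diffop^l(P))$, Lemma~\ref{lemma:localization} yields for each $i$ a cocycle $\phi_i\in\HCdiff^k(V_i,\Diffop(V_i\times G_i))$ obtained by restricting the arguments to $U_i$ and the output operator to $\tilde U_i$, still of multi-order $L$ and output order $l$. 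By Theorem~\ref{theorem:local-Hochschild-cohomologies-surjective-submersions} I then set $\Theta_i := (\delta^{-1})^k\phi_i$, so that $\delta\Theta_i=\phi_i$ with $\Theta_i\in\Diffop^{\tilde L}(V_i,\Diffop^{l+1}(V_i\times G_i))$, $\tilde L$ as in \eqref{eq:new-multi-index}; the decisive point is that these bounds are uniform in $i$, since they depend only on $L$ and $l$.

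The gluing rests on one elementary observation: on $\HCdiff^\bullet(M,\Diffop(P))$ the differential $\delta$ only involves composition with the operators $\rho_0(a)$, i.e.\ multiplication by the functions $\mathsf{p}^*a\in C^\infty(P)$, and these commute with multiplication by an arbitrary function on $P$ because $C^\infty(P)$ is commutative. Concretely, choose a partition of unity $\{\psi_i\}$ on $P$ with $\supp\psi_i\subset\tilde U_i$ and define the $(k-1)$-cochain $\hat\Theta_i$ by $\hat\Theta_i(a_1,\dots,a_{k-1}) = \psi_i\cdot\Theta_i(a_1|_{U_i},\dots,a_{k-1}|_{U_i})$, read as a differential operator on $P$ by extension by zero off $\tilde U_i$. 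Since left multiplication by $\psi_i$ is then a chain map and $\Theta_i$ is a primitive of the local restriction of $\phi$, one gets $\delta\hat\Theta_i = \psi_i\cdot\phi$. The locally finite sum $\Theta := \sum_i\hat\Theta_i$ is a well-defined element of $\Diffop^{\tilde L}(M,\Diffop^{l+1}(P))$ --- here the uniformity of $\tilde L$ and $l+1$ together with the local finiteness of $\{\supp\psi_i\}$ are exactly what upgrades membership from $\HCdiff^{k-1}(M,\Diffop(P))$ to the precise space appearing in \eqref{eq:phiExact} --- and $\delta\Theta = \sum_i\psi_i\cdot\phi = \phi$, which proves both \eqref{eq:phiExact} and the vanishing in \eqref{eq:cohomology-groups-surjective-submersion} for $k\ge1$.

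I expect that the genuine homological content is already packaged in Theorem~\ref{theorem:local-Hochschild-cohomologies-surjective-submersions} (the explicit contracting homotopy for the product model and its order estimate), so the only real obstacles left are bookkeeping: verifying that restriction to adapted charts respects the differential subcomplex via Lemma~\ref{lemma:localization}, that $f\mapsto\psi_i\cdot(\,\cdot\,)$ commutes with $\delta$, and --- the one spot where some care is genuinely needed --- that the uniform bound on the output order $l+1$ survives the passage to the locally finite infinite sum, which it does precisely because near any point of $P$ only finitely many of the $\hat\Theta_i$ are nonzero.
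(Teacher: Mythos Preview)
Your proposal is correct and follows essentially the same route as the paper's own proof: the case $k=0$ is immediate from the definition of vertical operators, and for $k\ge1$ one localizes via adapted charts, applies the explicit local homotopy of Theorem~\ref{theorem:local-Hochschild-cohomologies-surjective-submersions}, and glues the local primitives $\Theta_i$ with a partition of unity $\{\psi_i\}$ on $P$ subordinate to $\{\tilde U_i\}$, using that left multiplication by $\psi_i$ commutes with $\delta$ so that $\delta(\psi_i\Theta_i)=\psi_i\phi$. Your emphasis on the uniformity of the bounds $\tilde L$ and $l+1$ across the chart index is exactly the point that makes the more specific claim \eqref{eq:phiExact} go through, and the paper treats it in the same way.
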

\begin{proof}
    The case $k = 0$ is clear from the definition
    \eqref{eq:vertical-diffops}. For $k\ge 1$ we consider atlases
    $\{(\tilde{U}_\alpha ,\tilde{x}_\alpha)\}_{\alpha \in I}$ of $P$
    and $\{(U_\alpha ,x_\alpha)\}_{\alpha \in I}$ of $M$ consisting of
    adapted local charts. In addition, let
    $\{\tilde{\chi}_\alpha\}_{\alpha \in I}$ be a locally finite
    partition of unity for $P$ which is subordinate to the open cover
    $\{\tilde{U}_\alpha\}_{\alpha \in I}$, so $\tilde{\chi}_\alpha \in
    C^\infty(P)$ with $\supp \tilde{\chi}_\alpha \subset
    \tilde{U}_\alpha$ and $\sum_{\alpha \in I} \tilde{\chi}_\alpha =
    1$.  Then, let $\phi \in \mathrm{HC}^k_{\mathrm{diff}} (M,
    \Diffop(P))$ be closed, $\delta \phi =0$. By definition there
    exists an $l \in \mathbb{N}_0$ and a multi-index $L \in
    \mathbb{N}_0^k$ with $\phi \in \Diffop^L(M, \Diffop^l(P))$.  Due
    to Lemma~\ref{lemma:localization} the restrictions
    $\phi_{\tilde{U}_\alpha} \in \Diffop^L (U_\alpha, \Diffop^l
    (\tilde{U}_\alpha))$ are closed, too.
    Theorem~\ref{theorem:local-Hochschild-cohomologies-surjective-submersions}
    then ensures that these elements are exact: with
    \eqref{eq:explicit-homotopy} there exist $\Theta_\alpha \in
    \Diffop^{\tilde{L}} (U_\alpha,\Diffop^{l+1} (\tilde{U}_\alpha))$,
    $\tilde{L} \in \mathbb{N}_0^{k-1}$ as in
    \eqref{eq:new-multi-index}, with $\delta \Theta_\alpha=
    \phi_{\tilde{U}_\alpha}$. The restrictions
    \[
    \Theta_{\tilde{\chi}_\alpha} 
    (a_1, \ldots, a_{k-1})|_{\tilde{U}_\alpha}
    = 
    \tilde{\chi}_\alpha|_{\tilde{U}_\alpha} 
    \Theta_\alpha 
    \left(
        a_1|_{U_\alpha}, \ldots, a_{k-1}|_{U_\alpha}
    \right)
    \]
    and $\Theta_{\tilde{\chi}_\alpha}(a_1, \ldots, a_{k-1})|_{P
      \setminus \supp \tilde{\chi}_\alpha} = 0$ define global elements
    $\Theta_{\tilde{\chi}_\alpha} \in
    \Diffop^{\tilde{L}}(M,\Diffop^{l+1}(P))$. It is an easy
    computation which shows that $\delta \Theta_{\tilde{\chi}_\alpha}=
    \tilde{\chi}_\alpha \phi$. Due to the local finiteness of the
    partition of unity it is clear that $\Theta = \sum_{\alpha \in
      I}\Theta_{\chi_\alpha} \in
    \mathrm{HC}^{k-1}_{\mathrm{diff}}(M,\Diffop^{l+1} (P))$ is a
    well-defined differential operator, which finally yields the
    aspired exactness of $\phi$ via $\Theta$ fulfilling the
    requirements in \eqref{eq:phiExact},
    \[
    \phi
    = \left(\sum_{\alpha\in I} \tilde{\chi}_\alpha \right) \phi
    = \sum_{\alpha\in I}(\tilde{\chi}_\alpha \phi)
    = \sum_{\alpha\in I} \delta \Theta_{\tilde{\chi}_\alpha} 
    = \delta \sum_{\alpha\in I} \Theta_{\tilde{\chi}_\alpha} 
    = \delta \Theta. 
    \]
\end{proof}

Due to this result and those of
Section~\ref{sec:AlgebraicPreliminaries} we find the existence and the
uniqueness of deformed right module structures. So, the first part of
Theorem~\ref{theorem:def-quant-surj-sub} is proven.
\begin{corollary}
    Every surjective submersion admits a deformation quantization
    which is unique up to equivalence.
\end{corollary}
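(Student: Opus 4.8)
The plan is to read the corollary off the algebraic machinery of Section~\ref{sec:AlgebraicPreliminaries}, applied to the cohomology vanishing of Theorem~\ref{theorem:Hochschild-cohomologies-surjective-submersions}. First I would fix the data: put $\mathcal{A} = C^\infty(M)$ with $\mu_0$ the pointwise product, $\mathcal{E} = C^\infty(P)$, $\mathcal{D} = \Diffop^\bullet(P)$, and let $\rho_0(f,a) = f\cdot\mathsf{p}^*a$ be the undeformed right module structure \eqref{eq:right-module-structure-surjective-submersion}. As observed just before Lemma~\ref{lemma:local-form-of-Diffops}, this is exactly the setting of Example~\ref{example:DifferentialDeformations} with `type' given by $\HCdiff$, so that $\HCdiff^0(M,\Diffop(P)) = \mathcal{D}$ and, by Lemma~\ref{lemma:Hochschild-complex-simple}, the requirements \textit{i.)}--\textit{iv.)} of Section~\ref{sec:AlgebraicPreliminaries} hold. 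Here the given deformation is $\mu = \star = \sum_{r\ge0}\lambda^r C_r$ with bidifferential $C_r$, and a deformation quantization of $\mathsf{p}$ in the sense of Definition~\ref{definition:def-quant-surj-sub} is precisely a formal deformation $\rho = \sum_{r\ge0}\lambda^r\rho_r$ of $\rho_0$ with all $\rho_r\in\HCdiff^1(M,\Diffop(P))$ relative to $\mu$, an equivalence of two such being a $T = \id + \sum_{r\ge1}\lambda^r T_r$ with $T_r\in\mathcal{D} = \Diffop^\bullet(P)$.

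For existence I would run the order-by-order construction of Lemma~\ref{lemma:DeformationObstruction}. One starts from $\rho^{(0)} = \rho_0$, which is a right module structure modulo $\lambda$ because $\mathsf{p}^*$ is a unital algebra homomorphism. Inductively, given $\rho^{(r)}$, the obstruction $R_r$ of \eqref{eq:ObstructionExistence} is a $\delta$-closed element of $\HCdiff^2(M,\Diffop(P))$; by \eqref{eq:cohomology-groups-surjective-submersion} this cohomology is trivial, so Theorem~\ref{theorem:Hochschild-cohomologies-surjective-submersions} yields $\rho_{r+1}\in\HCdiff^1(M,\Diffop(P))$ with $\delta\rho_{r+1} = R_r$, with differential orders still under control thanks to the bound \eqref{eq:new-multi-index}. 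This gives $\rho^{(r+1)}$, and the induction produces the desired deformation $\bullet$. For uniqueness up to equivalence I would similarly use Lemma~\ref{lemma:ObstructionForEquivalence}: two deformations $\bullet$ and $\tilde\bullet$ share the zeroth order $\rho_0$, so $T^{(0)} = \id$ is an equivalence modulo $\lambda$; inductively the obstruction $E_r$ of \eqref{eq:ErExplicit} lies in $\HCdiff^1(M,\Diffop(P))$ and is $\delta$-closed, hence a coboundary $\delta T_{r+1}$ since $\HH^1_{\mathrm{diff}}(M,\Diffop(P)) = \{0\}$ by \eqref{eq:cohomology-groups-surjective-submersion}. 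The induction then produces an equivalence $T = \id + \sum_{r\ge1}\lambda^r T_r$ with $T_r\in\Diffop^\bullet(P)$, as demanded by Definition~\ref{definition:def-quant-surj-sub}.

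Since the vanishing of $\HH^{\ge1}_{\mathrm{diff}}(M,\Diffop(P))$ --- the substantial content, obtained via the localization of Lemma~\ref{lemma:localization} and the explicit Koszul homotopy of Section~\ref{sec:homological} in Theorem~\ref{theorem:local-Hochschild-cohomologies-surjective-submersions} and Theorem~\ref{theorem:Hochschild-cohomologies-surjective-submersions} --- is already available, this last step presents no real obstacle. What remains is bookkeeping: checking that the recursively produced $\rho_r$ are genuinely bidifferential and the $T_r$ genuinely differential, which follows from the order estimates \eqref{eq:new-multi-index}, and noting that uniqueness up to equivalence is exactly the statement that any two deformations are connected by such a $T$. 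If desired, the same recursion can be initialized so as to enforce $1\bullet a = \mathsf{p}^*a$ in addition, i.e.\ a deformation respecting the fibration, but that refinement is not needed for the present corollary.
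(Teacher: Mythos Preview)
Your proposal is correct and is exactly the paper's approach: the corollary is deduced from the algebraic obstruction theory of Section~\ref{sec:AlgebraicPreliminaries} (Lemmas~\ref{lemma:DeformationObstruction} and~\ref{lemma:ObstructionForEquivalence}) applied in the setting of Example~\ref{example:DifferentialDeformations}, using the vanishing of $\HH^{\ge 1}_{\mathrm{diff}}(M,\Diffop(P))$ from Theorem~\ref{theorem:Hochschild-cohomologies-surjective-submersions}. You have in fact spelled out more detail than the paper itself, which simply remarks that existence and uniqueness follow from these two ingredients.
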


In order to complete the proof of
Theorem~\ref{theorem:def-quant-surj-sub}, we have to show that there
always exists a deformation quantization which preserves the
fibration. For this purpose we have to make some choices of
geometrical structures: first we choose a connection on $P$, i.e.\  a
decomposition
\begin{equation}
    \label{eq:connection}
    TP = VP \oplus HP
\end{equation}
of the tangent bundle into the canonically given vertical bundle $VP =
\ker T\mathsf{p}$ and a horizontal bundle $HP$. Then, any vector field
$X\in \Gamma^\infty(TM)$ has a horizontal lift $X^{\mathrm{h}}\in
\Gamma^\infty(HP)$ which is uniquely defined by the two demands that
$X^{\mathrm{h}}$ is horizontal and $\mathsf{p}$-related to $X$,
$T\mathsf{p}\circ X^{\mathrm{h}}= X\circ \mathsf{p}$. Second, we
choose an always existing torsion-free covariant derivative $\nabla^P$
on $TP$, which respects the vertical bundle. This means that
$\nabla^P_Z V \in \Gamma^\infty(VP)$ for all vertical vector fields $V
\in \Gamma^\infty(VP)$ and arbitrary vector fields $Z \in
\Gamma^\infty(TP)$. The following lemma is a simple computation.
\begin{lemma}
    \label{lemma:connection-covariant-derivatives}
    Let $TP = VP\oplus HP$ be a connection and let $\nabla^P$ be a
    torsion-free covariant derivative, which respects the vertical
    bundle. Then we have:
    \begin{enumerate}
    \item The equation        
        \begin{equation}
            \label{eq:covariant-derivative-basis}
            T\mathsf{p}\circ \nabla^P_{X^{\mathrm{h}}}Y^{\mathrm{h}} =
            \left(\nabla^M_X Y \right)\circ \mathsf{p} \quad \forall
            X,Y \in \Gamma^\infty(TM)
        \end{equation}
        defines a torsion-free covariant derivative $\nabla^M$ on $M$.
    \item For all $l \in \mathbb{N}$, the corresponding symmetrized
        covariant derivatives $\mathsf{D}_P^{(l)}$ and
        $\mathsf{D}_M^{(l)}$ are related by
        \begin{equation}
            \label{eq:symmetric-covariant-derivatives-relation}
            \mathsf{D}_P^{(l)} \circ \mathsf{p}^*=
            \mathsf{p}^*\circ \mathsf{D}_M^{(l)}.
        \end{equation}
    \end{enumerate}
\end{lemma}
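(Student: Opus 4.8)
The plan is to prove part \textit{i.)} by carrying the defining properties of $\nabla^P$ over to $M$ through the horizontal lift and the tangent map $T\mathsf{p}$, and part \textit{ii.)} by a short induction on $l$ built on the order-one case.

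For part \textit{i.)} I would first check that \eqref{eq:covariant-derivative-basis} indeed defines a vector field on $M$, i.e.\ that $T_u\mathsf{p}\bigl(\nabla^P_{X^{\mathrm{h}}}Y^{\mathrm{h}}\at{u}\bigr)$ depends only on $\mathsf{p}(u)$. This is a computation in the adapted charts of \eqref{eq:diagram-surjective-submersion}: expressing $X^{\mathrm{h}}$ and $Y^{\mathrm{h}}$ through the coordinate vector fields and using that $\nabla^P$ is torsion-free and respects $VP$ — so that all Christoffel symbols of $\nabla^P$ with horizontal output index and at least one vertical input index vanish — the components of $\nabla^P_{X^{\mathrm{h}}}Y^{\mathrm{h}}$ along the base directions reduce to an expression involving only $X$, $Y$, their base derivatives, and the purely horizontal block of the Christoffel symbols of $\nabla^P$, hence depend on the base point alone; this is the one place where the hypotheses on $\nabla^P$ really enter. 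Call the resulting field $\nabla^M_XY$. The covariant-derivative axioms then follow formally: since $X^{\mathrm{h}}$ is $\mathsf{p}$-related to $X$ one has $(\mathsf{p}^*f)X^{\mathrm{h}}=(fX)^{\mathrm{h}}$ and $X^{\mathrm{h}}(\mathsf{p}^*f)=\mathsf{p}^*(Xf)$, so $C^\infty(M)$-linearity in the first argument is immediate and, applying $T\mathsf{p}$ to $\nabla^P_{X^{\mathrm{h}}}\bigl((\mathsf{p}^*f)Y^{\mathrm{h}}\bigr)=\mathsf{p}^*(Xf)\,Y^{\mathrm{h}}+(\mathsf{p}^*f)\,\nabla^P_{X^{\mathrm{h}}}Y^{\mathrm{h}}$, one gets the Leibniz rule in the second. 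For torsion-freeness one uses that $[X^{\mathrm{h}},Y^{\mathrm{h}}]$ is $\mathsf{p}$-related to $[X,Y]$ (as $X^{\mathrm{h}},Y^{\mathrm{h}}$ are $\mathsf{p}$-related to $X,Y$), so applying $T\mathsf{p}$ to $\nabla^P_{X^{\mathrm{h}}}Y^{\mathrm{h}}-\nabla^P_{Y^{\mathrm{h}}}X^{\mathrm{h}}=[X^{\mathrm{h}},Y^{\mathrm{h}}]$ yields $\nabla^M_XY-\nabla^M_YX=[X,Y]$.

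For part \textit{ii.)} the plan is to prove the operator identity $\mathsf{D}_P\circ\mathsf{p}^*=\mathsf{p}^*\circ\mathsf{D}_M$ on pulled-back symmetric forms and to iterate it; applied to a function $a$ this is exactly \eqref{eq:symmetric-covariant-derivatives-relation}. In degree zero the identity is just $\D(\mathsf{p}^*a)=\mathsf{p}^*(\D a)$. For the inductive step I would use two observations. First, for any symmetric form $\theta$ on $M$ the pull-back $\mathsf{p}^*\theta$ is horizontal, and because $\nabla^P$ respects $VP$ one obtains $\nabla^P_V(\mathsf{p}^*\theta)=0$ for every vertical $V$; hence $\mathsf{D}_P(\mathsf{p}^*\theta)$ is again horizontal and is therefore determined by its values on tuples of horizontal lifts. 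Second, every horizontal tangent vector is the value of some horizontal lift, and by part \textit{i.)} the horizontal part of $\nabla^P_{X^{\mathrm{h}}}Y^{\mathrm{h}}$ equals $(\nabla^M_XY)^{\mathrm{h}}$; evaluating $\nabla^P_{X^{\mathrm{h}}}(\mathsf{p}^*\theta)$ on horizontal lifts by the Leibniz rule for the covariant derivative of forms and using $X^{\mathrm{h}}(\mathsf{p}^*g)=\mathsf{p}^*(Xg)$ then gives $\bigl(\nabla^P_{X^{\mathrm{h}}}(\mathsf{p}^*\theta)\bigr)(Y_1^{\mathrm{h}},\dots,Y_k^{\mathrm{h}})=\mathsf{p}^*\bigl((\nabla^M_X\theta)(Y_1,\dots,Y_k)\bigr)$. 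Symmetrizing over the differentiated slot turns this into $\mathsf{D}_P(\mathsf{p}^*\theta)=\mathsf{p}^*(\mathsf{D}_M\theta)$, and applying it with $\theta=\mathsf{D}_M^{(l)}a$ together with the induction hypothesis $\mathsf{D}_P^{(l)}(\mathsf{p}^*a)=\mathsf{p}^*(\mathsf{D}_M^{(l)}a)$ completes the induction.

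The point that needs care is the well-definedness of $\nabla^M$ in \textit{i.)}, i.e.\ that $T\mathsf{p}\circ\nabla^P_{X^{\mathrm{h}}}Y^{\mathrm{h}}$ descends along $\mathsf{p}$; this is exactly the step that uses torsion-freeness and the compatibility of $\nabla^P$ with the vertical bundle. Everything else is formal: \textit{i.)} is a transcription of the connection axioms through $T\mathsf{p}$, and \textit{ii.)} is the iteration of the order-one intertwining relation $\mathsf{D}_P\circ\mathsf{p}^*=\mathsf{p}^*\circ\mathsf{D}_M$, the only bookkeeping being to keep track of horizontality so that the comparison can be made on horizontal lifts.
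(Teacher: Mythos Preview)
The paper gives no detailed proof here (it only says the lemma is ``a simple computation''), and your overall plan is the natural one; in particular, your inductive argument for part \textit{ii.)} via the operator identity $\mathsf{D}_P\circ\mathsf{p}^*=\mathsf{p}^*\circ\mathsf{D}_M$ on pulled-back symmetric forms is correct and cleanly set up.

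There is, however, a genuine gap in the well-definedness step of part \textit{i.)}. You correctly reduce the base components of $\nabla^P_{X^{\mathrm{h}}}Y^{\mathrm{h}}$ in an adapted chart to $(X^i\partial_iY^k)\circ\mathsf{p}+\bigl((X^iY^j)\circ\mathsf{p}\bigr)\,\Gamma^k_{ij}$, but the conclusion ``hence depend on the base point alone'' is unjustified: the base-indexed Christoffel symbols $\Gamma^k_{ij}$ are functions on $P$, and neither torsion-freeness nor the condition $\nabla^P_Z(VP)\subseteq VP$ forces them to be constant along the fibres. Concretely, on $\mathsf{p}\colon\mathbb{R}^2\to\mathbb{R}$, $(x,y)\mapsto x$, with the trivial horizontal bundle $HP=\mathrm{span}\{\partial_x\}$, the covariant derivative with $\Gamma^x_{xx}=y$ and all other Christoffel symbols zero is torsion-free and respects $VP$, yet $T\mathsf{p}\bigl(\nabla^P_{\partial_x}\partial_x\bigr)=y\,\partial_x$ does not descend to $M$. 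So the lemma as literally stated is slightly too strong. What the paper actually needs, and what one should establish, is that \emph{some} torsion-free $\nabla^P$ respecting $VP$ can be chosen so that \eqref{eq:covariant-derivative-basis} holds---for example by fixing any torsion-free $\nabla^M$ on $M$ and any linear connection on the vector bundle $VP$, and assembling $\nabla^P$ from these together with the splitting $TP=VP\oplus HP$; with that choice your argument for part \textit{ii.)} goes through unchanged.
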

\begin{lemma}
    \label{lemma:special-diffop}
    Let $\bullet$ be a deformation quantization of a surjective
    submersion $\mathsf{p}: P \longrightarrow M$. Then there exists a
    formal series $T = \id + \sum_{r=1}^\infty \lambda^r
    T_r$ of differential operators $T_r \in
    \Diffop(P)$ such that
    \begin{equation}
        \label{eq:Diffop-deformation-quantization}
        T(\mathsf{p}^*a)
        = 1 \bullet a \quad \forall a\in C^\infty(M).
    \end{equation}
\end{lemma}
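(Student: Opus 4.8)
The plan is to split off the lowest-order term of $1\bullet a$ and realize the remainder as the effect of a differential operator on $P$ applied to $\mathsf{p}^*a$. Since all maps are extended $\mathbb{C}[[\lambda]]$-linearly, it is enough to consider a fixed $a\in C^\infty(M)$. From $\rho_0(1,a)=1\cdot\mathsf{p}^*a=\mathsf{p}^*a$, see \eqref{eq:right-module-structure-surjective-submersion}, we obtain
\[
1\bullet a=\mathsf{p}^*a+\sum_{r=1}^\infty\lambda^r\rho_r(1,a),
\]
and for each $r\ge 1$ the assignment $S_r\colon C^\infty(M)\ni a\mapsto\rho_r(1,a)\in C^\infty(P)$ is a differential operator: inserting the constant function $1$ into the first slot of the bidifferential operator $\rho_r$ annihilates all derivatives in that slot, so what remains is genuinely differential in $a$, with $C^\infty(P)$ regarded as a $C^\infty(M)$-module via $\mathsf{p}^*$. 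Hence the lemma follows once one shows that any differential operator $S\colon C^\infty(M)\to C^\infty(P)$ of this kind factors as $S=T_S\circ\mathsf{p}^*$ with $T_S\in\Diffop(P)$; granting this, $T:=\id+\sum_{r\ge 1}\lambda^r T_{S_r}$ is a formal series of differential operators on $P$ and $T(\mathsf{p}^*a)=\mathsf{p}^*a+\sum_{r\ge 1}\lambda^r\rho_r(1,a)=1\bullet a$, the order-zero part being $\id\circ\mathsf{p}^*=\mathsf{p}^*$.

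For the factorization I would use the geometric data fixed just before the lemma: a connection $TP=VP\oplus HP$ as in \eqref{eq:connection}, a torsion-free covariant derivative $\nabla^P$ respecting $VP$, and the induced $\nabla^M$ from Lemma~\ref{lemma:connection-covariant-derivatives}. By the analogue of the symbol calculus of Remark~\ref{remark:symbol-calculus} for differential operators with values in the $C^\infty(M)$-module $C^\infty(P)$, write $S=\sum_{j=0}^l D_{X_j}$ with symbols $X_j\in\Gamma^\infty(\mathsf{p}^*S^jTM)$ with respect to $\nabla^M$, so that $D_{X_j}(a)=\frac{1}{j!}\langle X_j,\mathsf{p}^*\mathsf{D}_M^{(j)}a\rangle$. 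Lifting the symbols by the fibrewise horizontal lift $\mathsf{p}^*S^jTM\to S^jTP$, $X_j\mapsto X_j^{\mathrm{h}}$, set $T_S:=\sum_{j=0}^l D_{X_j^{\mathrm{h}}}$ with respect to $\nabla^P$. For $a\in C^\infty(M)$ one has $\mathsf{D}_P^{(j)}\mathsf{p}^*a=\mathsf{p}^*\mathsf{D}_M^{(j)}a$ by \eqref{eq:symmetric-covariant-derivatives-relation}, and since the horizontal lift is a right inverse of $T\mathsf{p}$, contracting $X_j^{\mathrm{h}}$ with the pulled-back symmetric form $\mathsf{p}^*\mathsf{D}_M^{(j)}a$ reproduces the contraction of $X_j$ with $\mathsf{D}_M^{(j)}a$; hence $D_{X_j^{\mathrm{h}}}(\mathsf{p}^*a)=D_{X_j}(a)$ and therefore $T_S(\mathsf{p}^*a)=S(a)$, as required.

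The only delicate point is the globalization, i.e.\ that $T_S$ is a well-defined differential operator on all of $P$; the construction above settles this by basing $T_S$ on the globally chosen $\nabla^P$ and $HP$. An equally good but more pedestrian route avoids connections altogether: in an adapted chart as in \eqref{eq:diagram-surjective-submersion}, with coordinates $(x^1,\dots,x^n)$ pulled back from $M$ and fibre coordinates $(y^1,\dots,y^k)$, write $S(a)=\sum_{|\alpha|\le l}S^\alpha\cdot\mathsf{p}^*(\partial^{|\alpha|}a/\partial x^\alpha)$ with local coefficients $S^\alpha$ on the chart in $P$, replace each $\partial/\partial x^i$ by the coordinate vector field $\partial/\partial x^i$ on $P$ (which works because $\partial(\mathsf{p}^*a)/\partial x^i=\mathsf{p}^*(\partial a/\partial x^i)$ and $\partial(\mathsf{p}^*a)/\partial y^\mu=0$), and glue the resulting local operators with a partition of unity on $P$ subordinate to the adapted charts; this again produces $T_S\in\Diffop(P)$ with $T_S\circ\mathsf{p}^*=S$. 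In either case no obstruction theory is involved: Lemma~\ref{lemma:special-diffop} is a soft lifting statement whose only role is to let one pass, in the next step, to an equivalent deformation quantization that preserves the fibration.
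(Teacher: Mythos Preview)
Your proof is correct and follows essentially the same route as the paper: both recognize $a\mapsto\rho_r(1,a)$ as a differential operator $C^\infty(M)\to C^\infty(P)$, expand it via the symbol calculus with respect to $\nabla^M$, lift the symbols horizontally into $\Gamma^\infty(S^\bullet HP)\cong\Gamma^\infty(\mathsf{p}^*S^\bullet TM)$, and then define $T_r$ using these lifted symbols together with $\mathsf{D}_P$, invoking \eqref{eq:symmetric-covariant-derivatives-relation} for the key identity. Your alternative partition-of-unity argument is a valid and slightly more elementary variant that the paper does not mention, but your main argument matches the paper's almost verbatim.
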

\begin{proof}
    With $f\bullet a =\sum_{r=0}^\infty \lambda^r\rho_r(f,a)$ it is
    obvious that $D_r(a)=\rho_r(1,a)$ for all $r$ defines a
    differential operator $D_r\in \Diffop^{l_r}
    (M,C^\infty(P))$ with $l_r\in \mathbb{N}_0$. An
    according symbol calculus with respect to the structures of Lemma
    \ref{lemma:connection-covariant-derivatives} shows that
    \begin{equation}
        \label{eq:diff-M-P}
        D_r(a) = \sum_{s=0}^{l_r} 
        \left\langle 
            T^r_s, \mathsf{p}^* \mathsf{D}_M^{(s)} a
        \right\rangle 
        \quad \textrm{with} \quad T^r_s \in \Gamma^\infty(HP).
    \end{equation}
    Then we define $T_r \in \Diffop^{l_r}(P)$ by
    \begin{equation}
        \label{eq:diff-P}
        T_r(f)
        =  \sum_{s=0}^{l_r} 
        \left\langle T^r_s, \mathsf{D}_P^{(s)} f \right\rangle. 
    \end{equation}
    Clearly, $T_0=\id$ and with
    \eqref{eq:symmetric-covariant-derivatives-relation} we find
    $T_r(\mathsf{p}^* a)= D_r(a)$ which proves the lemma.
\end{proof}

With this lemma we now conclude the proof of
Theorem~\ref{theorem:def-quant-surj-sub}.
\begin{corollary}
    Every surjective submersion admits a deformation quantization
    which preserves the fibration.
\end{corollary}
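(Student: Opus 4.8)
The plan is to take any deformation quantization $\bullet$ of $\mathsf{p}$ --- which exists by the preceding corollary --- and to conjugate it by the formal series $T = \id + \sum_{r \ge 1} \lambda^r T_r$ of differential operators on $P$ furnished by Lemma~\ref{lemma:special-diffop}, which satisfies $T(\mathsf{p}^*a) = 1 \bullet a$ for all $a \in C^\infty(M)$. Since $T$ starts with the identity, $T^{-1}$ exists as a formal series of differential operators of the same type.

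The key preliminary fact is that $T$ fixes constants: putting $a = 1$ in $T(\mathsf{p}^*a) = 1 \bullet a$ and using $\mathsf{p}^*1 = 1$ together with the unit property $1 \bullet 1 = 1$ (which holds since $f \bullet 1 = f$ for any right module structure) yields $T(1) = 1$, hence also $T^{-1}(1) = 1$. I would then define
\[
    f \mathbin{\tilde{\bullet}} a := T^{-1}\bigl( T(f) \bullet a \bigr), \qquad f \in C^\infty(P)[[\lambda]],\ a \in C^\infty(M)[[\lambda]],
\]
and verify the three requirements. It is a right $(C^\infty(M)[[\lambda]], \star)$-module structure: associativity reduces to $\bigl( T(f) \bullet a \bigr) \bullet b = T(f) \bullet (a \star b)$, which holds because $\bullet$ is one, and $f \mathbin{\tilde{\bullet}} 1 = T^{-1}(T(f)) = f$. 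It has the form \eqref{eq:deformation-surjective-submersion}: the zeroth order is $f \cdot \mathsf{p}^*a$ since $T$ and $T^{-1}$ equal $\id$ modulo $\lambda$, and every higher-order coefficient is a finite composition of the differential operators $T_r, (T^{-1})_r$ with the bidifferential operators $\rho_s$, hence bidifferential. And it preserves the fibration: using $T^{-1}(1) = 1$ and Lemma~\ref{lemma:special-diffop} once more,
\[
    1 \mathbin{\tilde{\bullet}} a = T^{-1}\bigl( T(1) \bullet a \bigr) = T^{-1}( 1 \bullet a ) = T^{-1}\bigl( T(\mathsf{p}^*a) \bigr) = \mathsf{p}^*a,
\]
which by the observation following Definition~\ref{definition:def-quant-surj-sub} is equivalent to \eqref{eq:preserve-fibration-surj-sub}. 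As a byproduct $T^{-1}$ is an equivalence from $\bullet$ to $\tilde{\bullet}$, in accordance with the uniqueness already established.

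I do not expect a genuine obstacle: the argument is short once Lemma~\ref{lemma:special-diffop} is available. The only delicate point is the use of $T(1) = 1$ --- without it the conjugation would reintroduce a defect $T^2(\mathsf{p}^*a)$ instead of cancelling the original one --- so what really matters is that the operator $T$ constructed in Lemma~\ref{lemma:special-diffop} normalizes the unit, which is exactly what its defining identity gives at $a = 1$.
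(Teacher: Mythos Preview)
Your proof is correct and follows essentially the same route as the paper: start from an arbitrary deformation quantization $\bullet$, conjugate by the operator $T$ of Lemma~\ref{lemma:special-diffop}, and check that $\tilde{\bullet}$ preserves the fibration. The only cosmetic difference is that the paper verifies \eqref{eq:preserve-fibration-surj-sub} directly via $(\mathsf{p}^*a)\mathbin{\tilde{\bullet}} b = T^{-1}((1\bullet a)\bullet b) = T^{-1}(1\bullet(a\star b)) = \mathsf{p}^*(a\star b)$, which bypasses the need to observe $T(1)=1$; your verification of the equivalent condition $1\mathbin{\tilde{\bullet}} a = \mathsf{p}^*a$ is equally valid.
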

\begin{proof}
    Let $\bullet$ be an arbitrary deformation quantization. Since the
    map $T$ in Lemma~\ref{lemma:special-diffop} has all properties of
    an equivalence transformation, $f\bullett a= T^{-1}\left(Tf
        \bullet a \right)$ defines a new deformation quantization
    $\bullett$. With \eqref{eq:Diffop-deformation-quantization} we
    then get $(\mathsf{p}^*a) \bullett b = \mathsf{p}^*( a\star b)$
    for all $a,b \in C^\infty(M)$.
\end{proof}

Theorem \ref{theorem:Hochschild-cohomologies-surjective-submersions}
further ensures that we can apply
Proposition~\ref{proposition:Kommutante}. Thus, every choice of a
deformation quantization $\bullet$ and a decomposition
\begin{equation}
    \label{eq:decomposition-diffop}
    \Diffop(P)=\Diffopver(P)\oplus
    \overline{\Diffopver(P)}
\end{equation}
leads to an isomorphism
\begin{equation}
    \label{eq:iso-commutant-surj-sub}
    \rho': 
    \Diffopver(P) [[\lambda]]
    \longrightarrow 
    \{ D\in \Diffop(P) [[\lambda]] \: | \: D(f\bullet a) = D(f)\bullet a\}
\end{equation}
with $\rho'=\id + \sum_{r=1}^\infty \lambda^r \rho'_r$ and an
associative deformation $\left(\Diffopver (P) [[\lambda]], \mu'
\right)$ of $\Diffopver(P)$. Using the definitions
\begin{equation}
    \label{eq:bulletp-starp}
    A \bulletp f = \rho'(A)f
    \quad 
    \textrm{and}
    \quad A \starp B= \mu'(A,B),
\end{equation}
for all $A, B \in \Diffopver(P) [[\lambda]]$ and $f \in
C^\infty(P)[[\lambda]]$, the functions $C^\infty(P)[[\lambda]]$
inherit a $(\starp, \star)$-bimodule structure which is shortly
denoted by
\begin{equation}
    \label{eq:bimodule-surjective-submersion}
    {}_{(\Diffopver (P)[[\lambda]],\starp)} 
    (\bulletp,C^\infty(P)[[\lambda]], \bullet)_{(C^\infty(M)[[\lambda]],\star)}.
\end{equation}
\begin{proposition}
    \label{proposition:MutualCommutants}
    The commutant of $\Diffopver(P)[[\lambda]]$ acting via $\bulletp$
    on $C^\infty(P)[[\lambda]]$ is given by $C^\infty(M)[[\lambda]]$
    acting via $\bullet$. Thus the two algebras in
    \eqref{eq:bimodule-surjective-submersion} are mutual commutants.
\end{proposition}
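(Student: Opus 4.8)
The plan is to read the assertion as an equality of subsets of $\Diffop(P)[[\lambda]]$ (equivalently, with the same proof, of $\lambda$-adically continuous $\mathbb{C}[[\lambda]]$-linear endomorphisms of $C^\infty(P)[[\lambda]]$): on one side the commutant $\{D : D\circ L_A = L_A\circ D \text{ for all } A\}$ of the left multiplications $L_A\colon f\mapsto A\bulletp f$, $A\in\Diffopver(P)[[\lambda]]$, and on the other the right multiplications $R_a\colon f\mapsto f\bullet a$, $a\in C^\infty(M)[[\lambda]]$. One inclusion is purely formal: the bimodule structure in \eqref{eq:bimodule-surjective-submersion} is exactly the mixed associativity $(A\bulletp f)\bullet a = A\bulletp(f\bullet a)$, i.e.\ $R_a\circ L_A = L_A\circ R_a$, so every $R_a$ lies in the commutant of the $L_A$. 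For the ``mutual'' part one also wants $\{L_A\}$ to be the commutant of $\{R_a\}$; but this is nothing new, since by the very definition of $\bulletp$ recalled in \eqref{eq:iso-commutant-surj-sub} the set $\{L_A : A\in\Diffopver(P)[[\lambda]]\}$ \emph{is} the commutant of $\bullet$ inside $\Diffop(P)[[\lambda]]$, which is Proposition~\ref{proposition:Kommutante}. Hence the only thing left to prove is the reverse inclusion $\{D : [D,L_A]=0 \text{ for all } A\}\subseteq\{R_a\}$.

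The key classical input I would isolate first is the undeformed version: if a $\mathbb{C}$-linear endomorphism $B_0$ of $C^\infty(P)$ commutes with every vertical differential operator in the sense of \eqref{eq:vertical-diffops}, then $B_0$ is multiplication by $\mathsf{p}^*a_0$ for a unique $a_0\in C^\infty(M)$. Indeed, commuting with the order-zero vertical operators, i.e.\ with multiplication by every $h\in C^\infty(P)$, forces $B_0$ to be $C^\infty(P)$-linear, hence $B_0 = m_g$ with $g=B_0(1)$; commuting with vertical vector fields then gives $(Xg)f=0$ for all $f$ and all $X\in\Gamma^\infty(\ker T\mathsf{p})$, so $\D g$ annihilates the vertical bundle. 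In an adapted chart, where $\mathsf{p}$ looks like $\mathsf{pr}_1\colon V\times G\to V$, this says $g$ is locally a pull-back, and because $\mathsf{p}$ is surjective the locally defined base functions glue to a global $a_0\in C^\infty(M)$ with $g=\mathsf{p}^*a_0$.

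Finally I would run the order-by-order lift. Let $B=\sum_{r\ge 0}\lambda^r B_r$ commute with all $L_A$; in particular $[B,\rho'(A)]=0$ for every \emph{undeformed} $A\in\Diffopver(P)$, and since $\rho'=\id+O(\lambda)$ the lowest $\lambda$-order of such an identity only sees the bracket with $A$ itself. I claim recursively that one can choose $a^{(r)}=a_0+\cdots+\lambda^r a_r\in C^\infty(M)[[\lambda]]$ with $B-R_{a^{(r)}}\in\lambda^{r+1}\Diffop(P)[[\lambda]]$, the case $r=-1$ reading $B\in\Diffop(P)[[\lambda]]$, which is the hypothesis. Given the claim for $r-1$, write $\lambda^r C := B-R_{a^{(r-1)}}$; this difference still commutes with all $L_A$ (the commutant is a $\mathbb{C}[[\lambda]]$-submodule of the $\lambda$-torsion-free module $\Diffop(P)[[\lambda]]$, hence closed under subtraction and division by $\lambda$), so $[C,\rho'(A)]=0$ for all undeformed $A\in\Diffopver(P)$, and reading this in order $\lambda^0$ gives $[C_0,A]=0$. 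The classical step then yields $a_r\in C^\infty(M)$ with $C_0=m_{\mathsf{p}^*a_r}$, and since $(R_{a_r})_0=m_{\mathsf{p}^*a_r}$ as well and $a\mapsto R_a$ is $\mathbb{C}[[\lambda]]$-linear, $B-R_{a^{(r-1)}+\lambda^r a_r}=\lambda^r(C-R_{a_r})\in\lambda^{r+1}\Diffop(P)[[\lambda]]$, completing the induction. In the $\lambda$-adic limit, $a=\sum_r\lambda^r a_r$ satisfies $B=R_a$; and $a\mapsto R_a$ is injective because $R_a(1)=1\bullet a=\mathsf{p}^*a+O(\lambda)$ with $\mathsf{p}^*$ injective. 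The same induction goes through with $\Diffop(P)[[\lambda]]$ replaced by all $\lambda$-adically continuous $\mathbb{C}[[\lambda]]$-linear endomorphisms. The only step needing genuine geometric input is the classical computation of the commutant together with the global gluing of the local base functions; everything else is a formal order-by-order induction that merely recycles Proposition~\ref{proposition:Kommutante}, so I do not expect a serious obstacle here.
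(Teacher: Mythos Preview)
Your proposal is correct and follows essentially the same route as the paper's proof: identify the zeroth order of an element of the commutant by first commuting with order-zero vertical operators (multiplication by functions on $P$) to see it is a function, then with vertical vector fields to see it is a pull-back from $M$, and finally subtract the corresponding $R_{a_0}$ and induct. Your write-up is more explicit about the gluing of local base functions and about why the difference can be divided by $\lambda$ inside the commutant, but the underlying argument is the same as the paper's.
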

\begin{proof}
    Let $A = \sum_{r=0}^\infty \lambda^r A_r$ satisfy $A(D \bulletp f)
    = D \bulletp (Af)$ for all $D \in \Diffopver(P)[[\lambda]]$ and $f
    \in C^\infty(P)[[\lambda]]$. It follows that in zeroth order $A_0$
    commutes with all undeformed left multiplications with functions
    $D \in C^\infty(P)$. Hence $A_0 \in C^\infty(P)$. Moreover, since
    $A_0$ commutes with all $D \in \Diffopver(P)$ in zeroth order,
    $A_0$ is constant in fibre directions, i.e.\  $A_0 =
    \mathsf{p}^*a_0$ for some $a_0 \in C^\infty(M)$.  Since $A(D
    \bulletp f) - (D \bulletp f) \bullet a_0 = D \bulletp (Af - f
    \bullet a_0)$ and since $Af - f \bullet a_0$ has vanishing zeroth
    order we can proceed by induction.
\end{proof}
\begin{remark}
    \label{remark:NotYetMorita}
    Even though the two algebras are mutual commutants the bimodule is
    \emph{not} a Morita equivalence bimodule. This is not even true on
    the classical level. However, we will see in
    Section~\ref{sec:AssociatedBundles} the relation to Morita theory.
\end{remark}

%
%

\section{Deformation Quantization of Principal Bundles}
\label{sec:PrincipalBundles}

In this section we will prove
Theorem~\ref{theorem:def-quant-principal} and therefore consider
deformations of the right module structures which appear in the
special case of principal fibre bundles. As we will see, the general
results for surjective submersions of
Section~\ref{sec:SurjectiveSubmersions} can be applied and even
simplified in some parts.

So, in this section, let $\mathsf{p}: P\longrightarrow M$ be a
principal fibre bundle with total space $P$, basis $M$, structure Lie
group $G$ and principal right action $\mathsf{r}: P \times G
\longrightarrow P$ as in the introduction.  As usual we write
$\mathsf{r}_g (u) = \mathsf{r}(u, g)$ for $u \in P$ and $g \in G$.
Now, the undeformed right module structure
\eqref{eq:right-module-structure-surjective-submersion} has the
further property of $G$-invariance,
\begin{equation}
    \label{eq:G-invariance-right-module-structure}
    \mathsf{r}_g^* \circ \rho_0(a)= \rho_0(a)\circ \mathsf{r}_g^*.
\end{equation}
Since the aspired deformation should preserve this, we now have to consider the
$G$-invariant differential operators
\begin{equation}
    \label{eq:G-invariant-differential-operators}
    \Diffop(P)^G= \{ D\in
    \Diffop(P) \: | \: 
    \mathsf{r}_g^* \circ D = D \circ \mathsf{r}_g^* 
    \quad \forall g \in G\}
\end{equation}
instead of all differential operators.  It is clear that the bimodule
structure \eqref{eq:FunnyBimoduleStructure} is compatible with the
right action, i.e.\  $\Diffop(P)^G$ is a sub-bimodule. Following our
general framework we have to use the differential Hochschild complex
with values in this bimodule $\Diffop(P)^G$.

Due to the properties of a principal fibre bundle, for any $p\in M$
there exist an open subset $U\subset M$ with $p\in U$ and
diffeomorphisms $x: M \supseteq U \longrightarrow V \subseteq
\mathbb{R}^n$ and $\varphi: P \supseteq \mathsf{p}^{-1} (U)
\longrightarrow U\times G$ with $\mathsf{pr}_1 \circ \varphi
=\mathsf{p}$ and $\varphi \circ \mathsf{r}_g = (\id_U \times r_g)
\circ \varphi$. Here, $r_g (h) = hg$ is the right multiplication with
$g \in G$ on $G$. As before, $(U, x)$ is a chart of $M$ and
$(\mathsf{p}^{-1} (U), \varphi)$ is a fibre bundle chart of $P$,
respectively. Again, it is possible to achieve that $V\subseteq
\mathbb{R}^n$ is convex. Altogether, this leads to the commutative
diagram
\begin{equation}
    \xymatrix{
      P \ar@(ur,ul)[] _{\mathsf{r}_g} \ar[d]_{\mathsf{p}}&& 
      \mathsf{p}^{-1}(U)
      \ar@(ur,ul)[] _{\mathsf{r}_g} \ar @{_{(}->}[ll]_{\iota}
      \ar[d]_{\mathsf{p}} \ar[rr]^{\varphi}&& 
      U\times G \ar@(ur,ul)[]
      _{\id_U \times r_g}
      \ar[d]_{\mathsf{pr}_1} \ar[rr]^{x\times \id_G}&&
      V\times G
      \ar@(ur,ul)[] _{\id_V \times r_g}
      \ar[d]_{\mathsf{pr}_1}\\
      M && 
      U \ar @{_{(}->}[ll]_{\iota}\ \ar @{=}[rr] &&
      U \ar[rr]^x &&
      V\subseteq \mathbb{R}^n.
    }
\end{equation}
Again, every column of this diagram describes a principal fibre bundle
with the same structure group $G$. Similar to Lemma
\ref{lemma:localization} the following statement holds.
\begin{lemma}
    \label{lemma:localization-principal}
    The restriction map, the fibre bundle
    chart and the chart of $M$ give rise to chain maps
        \begin{equation}
        \label{eq:localization-principal}
        \xymatrix{
          \mathrm{HC}^\bullet_{\mathrm{diff}} (M,\Diffop(P)^G)
          \ar[d] &\\ 
          \mathrm{HC}^\bullet_{\mathrm{diff}}
          (U,\Diffop(\mathsf{p}^{-1}(U))^G) 
          \ar[r]^(.5){\cong} &
          \mathrm{HC}^\bullet_{\mathrm{diff}} (V,\Diffop(V\times G)^G).
        }
    \end{equation}
\end{lemma}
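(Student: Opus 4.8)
The plan is to write down the two arrows of \eqref{eq:localization-principal} explicitly and to check, in each case, that the map is well defined, that it lands in differential cochains with values in the \emph{$G$-invariant} operators, and that it commutes with the Hochschild differential $\delta$. As observed just before the lemma, $\Diffop(\cdot)^G$ is a sub-bimodule of $\Diffop(\cdot)$, so the three complexes in \eqref{eq:localization-principal} are genuine subcomplexes of the ones appearing in Lemma~\ref{lemma:localization}, and it suffices to see that the maps constructed there restrict to the $G$-invariant parts. For the upper arrow, note that $\mathsf{p} \circ \mathsf{r}_g = \mathsf{p}$ forces $\mathsf{p}^{-1}(U)$ to be $\mathsf{r}_g$-stable for every $g \in G$, so the restriction to $\mathsf{p}^{-1}(U)$ of a $G$-invariant differential operator on $P$ is again $G$-invariant. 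For $\phi \in \HCdiff^k(M, \Diffop(P)^G)$ one sets $(\phi|_U)(a_1, \ldots, a_k)$ equal to the restriction to $\mathsf{p}^{-1}(U)$ of $\phi(\hat a_1, \ldots, \hat a_k)$, where $\hat a_i \in C^\infty(M)$ is any extension of $a_i \in C^\infty(U)$; this is independent of the chosen extensions since $\phi$ is differential in each of the $M$-arguments and differential operators on $P$ are local. The resulting cochain is multidifferential with values in $\Diffop(\mathsf{p}^{-1}(U))^G$, and $\phi \mapsto \phi|_U$ intertwines $\delta$ because $\delta$ is built only from the pointwise product on $C^\infty(M)$, the undeformed module multiplication $\rho_0(f,a) = f \cdot \mathsf{p}^* a$, and the insertions $\circ_i$, all of which are compatible with restriction to $U$ respectively $\mathsf{p}^{-1}(U)$.

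\textbf{The chart isomorphism.} For the lower, horizontal arrow one uses that $\mathsf{p}^{-1}(U)$ is already trivialised by the fibre bundle chart $\varphi$ --- this is the simplification over Lemma~\ref{lemma:localization}, where an auxiliary smaller chart domain $\tilde U$ was needed. Put $\psi = (x \times \id_G) \circ \varphi \colon \mathsf{p}^{-1}(U) \longrightarrow V \times G$; by the commutative diagram above, $\psi$ is a diffeomorphism with $\mathsf{pr}_1 \circ \psi = x \circ \mathsf{p}$ and $\psi \circ \mathsf{r}_g = (\id_V \times r_g) \circ \psi$. Conjugation $D \mapsto (\psi^{-1})^* \circ D \circ \psi^*$ is an algebra isomorphism $\Diffop(\mathsf{p}^{-1}(U)) \longrightarrow \Diffop(V \times G)$ preserving the order of differentiation which, since $\psi$ intertwines $\mathsf{r}_g$ with $\id_V \times r_g$, restricts to an isomorphism $\Diffop(\mathsf{p}^{-1}(U))^G \longrightarrow \Diffop(V \times G)^G$; likewise $x^*$ identifies $C^\infty(V)$ with $C^\infty(U)$. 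One then defines the map on cochains by conjugating the $M$-arguments with $x^*$ and the output operator with $\psi^*$. By the chain rule this preserves both the multidifferentiability in the $M$-slots and the fixed order of the output operator, hence it maps $\HCdiff^k(U, \Diffop(\mathsf{p}^{-1}(U))^G)$ into $\HCdiff^k(V, \Diffop(V \times G)^G)$; it is bijective with the evident inverse, and it intertwines $\delta$ because the identity $\mathsf{pr}_1 \circ \psi = x \circ \mathsf{p}$ makes the two undeformed module structures $\rho_0$ correspond under $\psi$ and $x$. Thus it is an isomorphism of differential Hochschild complexes, and composing it with the restriction map of the previous paragraph yields the asserted chain maps.

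\textbf{Main obstacle.} I expect no real difficulty here: the argument is essentially a repetition of the proof of Lemma~\ref{lemma:localization}, and the only genuinely new bookkeeping is keeping track of $G$-invariance, which in each of the two steps reduces to one elementary geometric fact --- that $\mathsf{p}^{-1}(U)$ is $\mathsf{r}_g$-stable (for the restriction map) and that $\psi$ intertwines $\mathsf{r}_g$ with $\id_V \times r_g$ (for the chart isomorphism). The remaining verifications --- naturality of $\delta$ under restriction and under diffeomorphism pullback, and preservation of multidifferentiability --- are routine and handled exactly as in the surjective-submersion case.
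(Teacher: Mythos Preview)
Your proposal is correct and is exactly the kind of argument the paper has in mind; indeed, the paper states both this lemma and the analogous Lemma~\ref{lemma:localization} without proof, treating them as routine verifications. You have even noticed the one genuine simplification over the surjective-submersion case, namely that $\mathsf{p}^{-1}(U)$ is already trivialised by the fibre bundle chart so the intermediate chart domain $\tilde U$ is unnecessary, and you correctly isolate the only new ingredient as the preservation of $G$-invariance under restriction and under the chart diffeomorphism.
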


The computation of the Hochschild cohomologies
$\mathrm{HH}^\bullet_\mathrm{diff} (V, \Diffop (V \times G)^G)$, now
with the restriction to the $G$-invariant differential operators with
respect to the action $\id_V \times r_g$ of the trivial principal fibre
bundle, can be done in an absolutely analogous way to that for
surjective submersions yielding the same result: in degrees $k \ge 1$
the cohomology is trivial. The reason for this are the following
facts:
\begin{enumerate}
\item $\Diffop^l(V\times G)^G\subset \Diffop^l(V\times G)$ is a closed
    subspace for all $l \in \mathbb{N}_0$ and hence a topological
    Hausdorff and complete bimodule itself.
\item The differential operators $\frac{\partial}{\partial x^j} \in
    \Diffop^1(V\times G)^G$ for $j = 1, \ldots, n$ in
    \eqref{eq:homotopy-map-Hom-Koszul} are $G$-invariant. Thus,
    \begin{equation}
        \label{eq:homotopy-map-principal}
        \delta_K^{-1}:
        \Union_{l=0}^\infty \Hom_{\mathcal{A}^e}
        (K_\bullet,\Diffop^l(V\times G)^G) 
        \longrightarrow
        \Union_{l=0}^\infty \Hom_{\mathcal{A}^e}
        (K_{\bullet-1},\Diffop^l(V\times G)^G).
    \end{equation}
\end{enumerate}
Altogether, we find the following theorem about the Hochschild
cohomologies for principal fibre bundles.
\begin{theorem}[Hochschild cohomology for principal fibre bundles]
    \label{theorem:Hochschild-cohomologies-principal-fibre-bundles}
    Let $\mathsf{p}: P\longrightarrow M$ be a principal fibre bundle
    with structure Lie group $G$.
    Then
    \begin{equation}
        \label{eq:cohomology-groups-principal-fibre-bundle}
        \mathrm{HH}^k_{\mathrm{diff}} (M,\Diffop(P)^G)
        =
        \begin{cases}
            \Diffopver(P)^G & \textrm{for} \; k = 0 \\
            \{0\} & \textrm{for} \; k\ge 1.
        \end{cases}
    \end{equation}
    Again, every $\phi \in \Diffop^L(M, \Diffop^l(P)^G)$ with $L =
    (l_1, \ldots, l_k) \in \mathbb{N}_0^k$, $k \ge 1$, and $\delta
    \phi = 0$ is of the form
    \begin{equation}
        \label{eq:phiExactInvariant}
        \phi = \delta \Theta,
    \end{equation}
    where $\Theta \in \Diffop^{\tilde{L}}(M, \Diffop^{l+1}(P)^G)$ and
    $\tilde{L}$ as in \eqref{eq:new-multi-index}.
\end{theorem}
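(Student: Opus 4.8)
The plan is to imitate, almost line by line, the proof of Theorem~\ref{theorem:Hochschild-cohomologies-surjective-submersions}, merely carrying the $G$-invariance along at every step; almost all of the work has already been done, either in Section~\ref{sec:homological} or in the two observations stated just before the theorem. For $k=0$ nothing new happens: a cochain $D\in\Diffop(P)^G$ is $\delta$-closed exactly when it commutes with every multiplication operator $\rho_0(a)$, i.e.\ when $D\in\Diffopver(P)$ by \eqref{eq:vertical-diffops}, and intersecting with $G$-invariance yields $\mathrm{HH}^0_{\mathrm{diff}}(M,\Diffop(P)^G)=\Diffopver(P)^G$.

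For $k\ge 1$ I would first handle the local model. By Lemma~\ref{lemma:localization-principal} it is enough to show that $\mathrm{HH}^k_{\mathrm{diff}}(V,\Diffop(V\times G)^G)$ vanishes for $k\ge 1$, together with an explicit contracting homotopy carrying the order estimates of Theorem~\ref{theorem:local-Hochschild-cohomologies-surjective-submersions}. Since $\Diffop^l(V\times G)^G$ is a closed subspace of $\Diffop^l(V\times G)$ it is still a Hausdorff and complete topological bimodule of the kind required, so Proposition~\ref{proposition:isomorphism-Hochschild-bar} and the whole Koszul and bar machinery apply verbatim with $\mathcal{M}^l=\Diffop^l(V\times G)^G$. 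The only thing to check is that the explicit homotopy \eqref{eq:explicit-homotopy-map} restricts to the $G$-invariant subcomplex: the pull-backs $(F^k)^*,(G^k)^*,(\Theta^k)^*,(h_X^k)^*$ and $(s^k)^*$ act solely on the resolution side and hence do not alter the value of a cochain at all, while the Koszul contracting homotopy $(\delta_K^{-1})^k$ of Definition~\ref{definition:homotopy-maps-Koszul} post-composes the value of a cochain with the operators $\partial/\partial x^j$, which are $G$-invariant on $V\times G$; therefore $(\delta_K^{-1})^k$, and with it $(\delta^{-1})^k$, sends $G$-invariant cochains to $G$-invariant cochains with the same bounds on the orders of differentiation. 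This gives both the vanishing and the homotopy in the local model, exactly as in the submersion case.

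Finally, for the globalization I would, given a closed $\phi\in\Diffop^L(M,\Diffop^l(P)^G)$, choose adapted atlases $\{(\tilde U_\alpha,\tilde x_\alpha)\}$ of $P$ and $\{(U_\alpha,x_\alpha)\}$ of $M$ with $\tilde U_\alpha=\mathsf{p}^{-1}(U_\alpha)$ (automatically $G$-invariant) and --- the only genuinely new ingredient --- a subordinate partition of unity on $P$ \emph{pulled back from the base}, $\tilde\chi_\alpha=\mathsf{p}^*\chi_\alpha$ with $\{\chi_\alpha\}$ a partition of unity on $M$ subordinate to $\{U_\alpha\}$, so that each $\tilde\chi_\alpha$ is $G$-invariant. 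By the local result the restrictions $\phi_{\tilde U_\alpha}$ are exact via $G$-invariant $\Theta_\alpha\in\Diffop^{\tilde L}(U_\alpha,\Diffop^{l+1}(\tilde U_\alpha)^G)$. Because left multiplication by the $G$-invariant function $\tilde\chi_\alpha$ is a $G$-invariant operator and a bimodule endomorphism of $\Diffop(P)$ (it commutes with the left and right $C^\infty(M)$-actions by commutativity of the pointwise product), the globally defined operators $\Theta_{\tilde\chi_\alpha}$ built as in the proof of Theorem~\ref{theorem:Hochschild-cohomologies-surjective-submersions} are $G$-invariant and satisfy $\delta\Theta_{\tilde\chi_\alpha}=\tilde\chi_\alpha\phi$; summing over the locally finite cover yields $\Theta=\sum_\alpha\Theta_{\tilde\chi_\alpha}\in\Diffop^{\tilde L}(M,\Diffop^{l+1}(P)^G)$ with $\delta\Theta=\phi$, which is \eqref{eq:phiExactInvariant}. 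The one point that needs care --- and the only place where the argument genuinely differs from the surjective-submersion case --- is precisely this: $G$-invariance of the primitive forces the partition of unity to be pulled back from $M$; once that is observed, the proof is just the invariant copy of the earlier one.
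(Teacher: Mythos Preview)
Your proof is correct and follows essentially the same approach as the paper: the local vanishing is obtained by observing that the explicit homotopy $(\delta^{-1})^k$ preserves $G$-invariance (since $\partial/\partial x^j$ is $G$-invariant on $V\times G$), and the globalization uses a partition of unity on $M$ rather than on $P$ to keep the primitives $G$-invariant. The paper phrases the latter step slightly differently---it multiplies the local primitives $\Theta_\alpha$ by $\chi_\alpha|_{U_\alpha}$ via the bimodule structure rather than by the pulled-back function $\mathsf{p}^*\chi_\alpha$---but since the left $C^\infty(M)$-action on $\Diffop(P)$ is precisely composition with multiplication by $\mathsf{p}^*\chi_\alpha$, this is the same operation, and your identification of this as ``the only place where the argument genuinely differs from the surjective-submersion case'' is exactly right.
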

\begin{proof}
    The proof for $k = 0$ is again trivial. For $k \ge 1$ we choose an
    atlas $\{(U_\alpha, x_\alpha)\}_{\alpha\in I}$ of $M$ and an
    appropriate principal fibre bundle atlas $\{(\mathsf{p}^{-1}
    (U_\alpha), \varphi_\alpha)\}_{\alpha\in I}$ of $P$. Further, let
    $\{\chi_\alpha\}_{\alpha \in I}$ be a partition of unity for $M$
    which is subordinate to the open cover $\{U_\alpha\}_{\alpha \in
      I}$, so $\chi_\alpha \in C^\infty(M)$ with $\supp \chi_\alpha
    \subseteq U_\alpha$ and $\sum_{\alpha \in I} \chi_\alpha = 1$.
    Now, let $\phi \in \mathrm{HC}^k_{\mathrm{diff}}(M, \Diffop(P)^G)$
    be closed, $\delta \phi = 0$. There exists an $l \in \mathbb{N}_0$
    and a multi-index $L\in \mathbb{N}^k_0$ with $\phi \in
    \Diffop^L(M, \Diffop^l(P)^G)$. The restrictions $\phi_{U_\alpha}
    \in \Diffop^L (U_\alpha, \Diffop^l (\mathsf{p}^{-1}
    (U_\alpha))^G)$ are closed, too. For them we find $\Theta_\alpha
    \in \Diffop^{\tilde{L}} (U_\alpha, \Diffop^{l+1} (\mathsf{p}^{-1}
    (U_\alpha))^G)$, $\tilde{L}\in\mathbb{N}_0^{k-1}$ as in
    \eqref{eq:new-multi-index}, with $\delta \Theta_\alpha=
    \phi_{U_\alpha}$. The restrictions
    \[
    \Theta_{\chi_\alpha}(a_1, \ldots, a_{k-1})|_{\mathsf{p}^{-1}(U_\alpha)}
    = 
    \chi_\alpha|_{U_\alpha}
    \cdot
    \Theta_\alpha(a_1|_{U_\alpha}, \ldots, a_{k-1}|_{U_\alpha})
    \]
    and $\Theta_{\chi_\alpha} (a_1,\ldots,a_{k-1})|_{P\setminus
      \mathsf{p}^{-1}(\supp \chi_\alpha)} = 0$ define global elements
    $\Theta_{\chi_\alpha} \in
    \Diffop^{\tilde{L}}(M,\Diffop^{l+1}(P)^G)$ which are evidently
    $G$-invariant.  As in the proof of Theorem
    \ref{theorem:Hochschild-cohomologies-surjective-submersions} one
    shows that $\Theta = \sum_{\alpha \in I}\Theta_{\chi_\alpha} \in
    \mathrm{HC}^{k-1}_{\mathrm{diff}}(M,\Diffop^{l+1}(P))$ has the
    correct degrees of differentiation and yields
    \eqref{eq:phiExactInvariant}.
\end{proof}
\begin{corollary}
    Every principal fibre bundle admits a deformation quantization
    which is unique up to equivalence.
\end{corollary}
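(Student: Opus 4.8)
The plan is to obtain the corollary as a direct consequence of the abstract order-by-order machinery of Section~\ref{sec:AlgebraicPreliminaries} together with the vanishing result Theorem~\ref{theorem:Hochschild-cohomologies-principal-fibre-bundles}, in exact parallel with the surjective-submersion case. First I would fix the algebraic data: take $\mathcal{A} = C^\infty(M)$ with the given differential star product $\star$ in the role of the deformation $\mu$, take $\mathcal{E} = C^\infty(P)$ with undeformed right module structure $\rho_0(a) = \mathsf{p}^*a\cdot(\cdot)$, and --- this is the only place where the principal-bundle case genuinely differs from the bare surjective submersion --- choose as the relevant subalgebra of endomorphisms $\mathcal{D} = \Diffop(P)^G$, the $G$-invariant differential operators. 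One checks that $\id \in \Diffop(P)^G$, that it is a subalgebra, and that the $G$-invariance \eqref{eq:G-invariance-right-module-structure} of $\rho_0$ gives $\rho_0(a) \in \Diffop^0(P)^G = \HCdiff^0(M, \Diffop(P)^G)$; the type conditions \textit{i.)}--\textit{iv.)} hold just as in Example~\ref{example:DifferentialDeformations}, since the $G$-invariant multi-differential cochains $\HCdiff^\bullet(M, \Diffop(P)^G)$ are closed under the insertions $\circ_i$, under the composition $\circ$, and under $\cup$, and since all $\mu_r \in \HCdiff^2(M, M)$ as $\star$ is differential.

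For existence, Lemma~\ref{lemma:DeformationObstruction} applies verbatim: if a $G$-invariant right module structure $\rho^{(r)}$ with respect to $\star$ has been built up to order $\lambda^r$ with $\rho_s \in \HCdiff^1(M, \Diffop(P)^G)$, then the obstruction to extending it one more order is the class $[R_r] \in \HHdiff^2(M, \Diffop(P)^G)$ with $R_r$ given by \eqref{eq:ObstructionExistence}, which indeed lies in $\HCdiff^2(M, \Diffop(P)^G)$ because each of its summands does. By Theorem~\ref{theorem:Hochschild-cohomologies-principal-fibre-bundles} this group is $\{0\}$, so $R_r$ is a coboundary; moreover the explicit form \eqref{eq:phiExactInvariant} supplied there provides a $G$-invariant primitive $\rho_{r+1} \in \HCdiff^1(M, \Diffop(P)^G)$ of controlled multi-order. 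Hence the recursion runs and produces a $G$-invariant deformation quantization $\bullet$ in the sense of Definition~\ref{definition:def-quant-principal}.

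Uniqueness up to equivalence is the dual argument. Given two $G$-invariant deformations $\bullet$ and $\tilde{\bullet}$, Lemma~\ref{lemma:ObstructionForEquivalence} reduces the construction of an equivalence $T = \id + \sum_{r\ge 1}\lambda^r T_r$ with $T_r \in \Diffop(P)^G$ to solving $\delta T_{r+1} = E_r$ with $E_r \in \HCdiff^1(M, \Diffop(P)^G)$ from \eqref{eq:ErExplicit}, the recursive obstruction being $[E_r] \in \HHdiff^1(M, \Diffop(P)^G)$; this again vanishes by Theorem~\ref{theorem:Hochschild-cohomologies-principal-fibre-bundles}, and choosing the $G$-invariant primitives it provides lets the recursion go through, yielding an equivalence in the sense of the second part of Definition~\ref{definition:def-quant-principal}. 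The only genuine care required beyond this bookkeeping --- and the sole place an obstacle could hide --- is making sure that every cochain manipulated remains in the $G$-invariant subcomplex and that the chosen primitives are $G$-invariant operators of the prescribed multi-order; but this has already been arranged in Theorem~\ref{theorem:Hochschild-cohomologies-principal-fibre-bundles}, whose proof builds the primitives $\Theta_{\chi_\alpha}$ from a partition of unity on $M$ and verifies their $G$-invariance. The substantive work thus sits entirely in that cohomology computation, and the corollary follows formally. (The further fibration-preserving refinement asserted in Theorem~\ref{theorem:def-quant-principal} then follows as in the surjective-submersion case from the $G$-invariant analogue of Lemma~\ref{lemma:special-diffop}, using a principal connection and a $G$-invariant torsion-free covariant derivative respecting the vertical bundle, both of which exist on a principal fibre bundle.)
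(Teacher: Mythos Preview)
Your proposal is correct and follows exactly the route the paper intends: the corollary is stated without a separate proof because it is an immediate consequence of Theorem~\ref{theorem:Hochschild-cohomologies-principal-fibre-bundles} combined with Lemmas~\ref{lemma:DeformationObstruction} and~\ref{lemma:ObstructionForEquivalence} applied with $\mathcal{D}=\Diffop(P)^G$, which is precisely what you have spelled out. Your parenthetical remark on the fibration-preserving refinement also matches what the paper does next via the $G$-invariant analogue of Lemma~\ref{lemma:special-diffop}.
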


In order to prove the existence of a deformation quantization which in
addition preserves the fibration, we proceed analogously to the
general case, only taking care of the additional $G$-invariance. We
have to choose an always existing $G$-invariant, torsion-free
covariant derivative $\nabla^P$ respecting the vertical bundle. The
additional requirement of $G$-invariance means that $\mathsf{r}_g^*
\nabla^P_Z W = \nabla^P_{\mathsf{r}_g^*Z} \mathsf{r}_g^* W$ for all
vector fields $V, W \in \Gamma^\infty(TP)$ and $g\in G$.
\begin{lemma}
    \label{lemma:covariant-derivative-G-invariant}
    Let $\nabla^P$ be a $G$-invariant covariant
    derivative. Then the $l$-fold symmetrized covariant derivative is
    $G$-invariant for all $l\in \mathbb{N}$,
    \begin{equation}
        \label{eq:covariant-derivative-invariant}
        \mathsf{r}_g^* \circ \mathsf{D}_P^{(l)}= \mathsf{D}_P^{(l)}
        \circ \mathsf{r}_g^*. 
    \end{equation}
\end{lemma}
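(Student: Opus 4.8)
The plan is to reduce the statement to the single--step identity $\mathsf{r}_g^* \circ \mathsf{D}_P = \mathsf{D}_P \circ \mathsf{r}_g^*$ on \emph{all} symmetric covariant tensor fields and then iterate. Recall first that the symmetrized covariant derivative is defined by symmetrizing $\nabla^P$, so that it maps $\Gamma^\infty(S^k T^*P)$ to $\Gamma^\infty(S^{k+1}T^*P)$, and $\mathsf{D}_P^{(l)} = \frac{1}{l!}\mathsf{D}_P^l$; in particular $\mathsf{D}_P^{(l)} = \frac{1}{l}\,\mathsf{D}_P \circ \mathsf{D}_P^{(l-1)}$ for $l \ge 1$, with $\mathsf{D}_P^{(1)}$ acting on $C^\infty(P)$ being just the exterior differential $\D$. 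Since each $\mathsf{r}_g$ is a diffeomorphism of $P$ with inverse $\mathsf{r}_{g^{-1}}$, the pullback $\mathsf{r}_g^*$ is well-defined on all tensor fields, is an algebra automorphism of the symmetric tensor algebra, hence commutes with $\vee$ and with the symmetrization map, and satisfies $\mathsf{r}_g^*\D a = \D \mathsf{r}_g^* a$ for $a \in C^\infty(P)$.

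The heart of the argument is the following. The hypothesis $\mathsf{r}_g^* \nabla^P_Z W = \nabla^P_{\mathsf{r}_g^* Z} \mathsf{r}_g^* W$ says exactly that every $\mathsf{r}_g$ is an affine transformation for $\nabla^P$. Using the canonical extension of $\nabla^P$ from $TP$ to all tensor bundles and the Leibniz rule, I would check that this affine property propagates to covariant tensor fields, i.e.\ $\mathsf{r}_g^*(\nabla^P_Z T) = \nabla^P_{\mathsf{r}_g^* Z}(\mathsf{r}_g^* T)$ for every $T \in \Gamma^\infty(S^k T^*P)$ and every vector field $Z$. Combining this with the fact that $\mathsf{r}_g^*$ commutes with symmetrization yields $\mathsf{r}_g^* \circ \mathsf{D}_P = \mathsf{D}_P \circ \mathsf{r}_g^*$ on all symmetric covariant tensor fields.

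Finally I would conclude by induction on $l$. The case $l = 1$ is the naturality of $\D$ under pullback recalled above. For the inductive step, using $\mathsf{D}_P^{(l)} = \frac{1}{l}\,\mathsf{D}_P \circ \mathsf{D}_P^{(l-1)}$, the intertwining property of $\mathsf{D}_P$ just established, and the inductive hypothesis $\mathsf{r}_g^* \circ \mathsf{D}_P^{(l-1)} = \mathsf{D}_P^{(l-1)} \circ \mathsf{r}_g^*$, one computes
\begin{equation*}
    \mathsf{r}_g^* \circ \mathsf{D}_P^{(l)}
    = \tfrac{1}{l}\,\mathsf{r}_g^* \circ \mathsf{D}_P \circ \mathsf{D}_P^{(l-1)}
    = \tfrac{1}{l}\,\mathsf{D}_P \circ \mathsf{r}_g^* \circ \mathsf{D}_P^{(l-1)}
    = \tfrac{1}{l}\,\mathsf{D}_P \circ \mathsf{D}_P^{(l-1)} \circ \mathsf{r}_g^*
    = \mathsf{D}_P^{(l)} \circ \mathsf{r}_g^*,
\end{equation*}
which is \eqref{eq:covariant-derivative-invariant}. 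The only real obstacle is the bookkeeping in the middle step: carefully verifying that the affine (i.e.\ $\nabla^P$-preserving) property of $\mathsf{r}_g$ on vector fields lifts to symmetric covariant tensor fields and interacts correctly with the symmetrization defining $\mathsf{D}_P$; everything else is routine, which is why the paper calls this a simple computation.
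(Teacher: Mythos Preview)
Your argument is correct and is exactly the natural proof. The paper itself gives no proof of this lemma at all---it is simply stated and treated as routine---so there is nothing to compare against beyond noting that your induction on $l$, reducing to the single-step intertwining $\mathsf{r}_g^* \circ \mathsf{D}_P = \mathsf{D}_P \circ \mathsf{r}_g^*$ via the affine property of $\mathsf{r}_g$ extended to symmetric covariant tensors, is precisely the computation the authors had in mind when they omitted it.
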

\begin{lemma}
    Let $\bullet$ be a deformation quantization of a principal fibre
    bundle $\mathsf{p}: P \longrightarrow M$ with structure Lie group
    $G$. Then there exists a formal series $T = \id +
    \sum_{r=1}^\infty \lambda^r T_r$ of $G$-invariant
    differential operators $T_r \in \Diffop(P)^G$, such that
    \begin{equation}
        \label{eq:Diffop-deformation-quantization-G-invariant}
        T(\mathsf{p}^*a) = 1 \bullet a 
        \quad \forall a\in C^\infty(M).
    \end{equation}
\end{lemma}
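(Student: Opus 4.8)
The plan is to follow the proof of Lemma~\ref{lemma:special-diffop} line by line, carrying the $G$-invariance through every step. Writing $f \bullet a = \sum_{r=0}^\infty \lambda^r \rho_r(f,a)$ and setting $D_r(a) = \rho_r(1,a)$, the same argument as in the surjective submersion case shows that each $D_r$ is a differential operator in $\Diffop^{l_r}(M, C^\infty(P))$ for some $l_r \in \mathbb{N}_0$, with $D_0 = \mathsf{p}^*$. The point to notice first is that, unlike in the general case, these operators are automatically $G$-invariant as maps into $C^\infty(P)$: putting $f = 1$ in the invariance property \eqref{eq:deformation-right-module-principal} and using $\mathsf{r}_g^* 1 = 1$ yields $\mathsf{r}_g^*(1 \bullet a) = 1 \bullet a$ for all $g \in G$ and $a \in C^\infty(M)$, hence $\mathsf{r}_g^* D_r(a) = D_r(a)$ order by order.

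Next I would fix, as announced in the paragraph preceding the lemma, a $G$-invariant torsion-free covariant derivative $\nabla^P$ respecting the vertical bundle together with a principal connection $TP = VP \oplus HP$, so that both Lemma~\ref{lemma:connection-covariant-derivatives} and Lemma~\ref{lemma:covariant-derivative-G-invariant} are available. The symbol calculus with respect to $\nabla^P$ then produces, exactly as in \eqref{eq:diff-M-P}, horizontal symbol tensors $T^r_s \in \Gamma^\infty(S^s HP)$ with $D_r(a) = \sum_{s=0}^{l_r} \langle T^r_s, \mathsf{p}^* \mathsf{D}_M^{(s)} a\rangle = \sum_{s=0}^{l_r} \langle T^r_s, \mathsf{D}_P^{(s)} \mathsf{p}^* a\rangle$, the second equality being \eqref{eq:symmetric-covariant-derivatives-relation}.

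The only genuinely new step is to verify that the symbol tensors $T^r_s$ are themselves $G$-invariant. Since $HP$ and $\nabla^P$ were chosen $G$-invariant, $\mathsf{r}_g^*$ maps $\Gamma^\infty(S^s HP)$ to itself and commutes with $\mathsf{D}_P^{(s)}$ by \eqref{eq:covariant-derivative-invariant}; using in addition $\mathsf{p} \circ \mathsf{r}_g = \mathsf{p}$ and the $G$-invariance of $D_r(a)$ obtained above, applying $\mathsf{r}_g^*$ to the previous formula gives $\sum_{s} \langle \mathsf{r}_g^* T^r_s, \mathsf{D}_P^{(s)} \mathsf{p}^* a\rangle = \sum_{s} \langle T^r_s, \mathsf{D}_P^{(s)} \mathsf{p}^* a\rangle$ for every $a \in C^\infty(M)$, and since the symbol of a differential operator is uniquely determined by the operator this forces $\mathsf{r}_g^* T^r_s = T^r_s$. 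Defining $T_r \in \Diffop^{l_r}(P)$ by $T_r(f) = \sum_{s=0}^{l_r} \langle T^r_s, \mathsf{D}_P^{(s)} f\rangle$ as in \eqref{eq:diff-P}, the $G$-invariance of the $T^r_s$ together with \eqref{eq:covariant-derivative-invariant} gives $T_r \in \Diffop(P)^G$, while $T_0 = \id$ and \eqref{eq:symmetric-covariant-derivatives-relation} give $T_r(\mathsf{p}^* a) = D_r(a) = \rho_r(1,a)$, so that $T = \id + \sum_{r \ge 1} \lambda^r T_r$ satisfies \eqref{eq:Diffop-deformation-quantization-G-invariant}. I expect the main obstacle to be nothing more than this equivariance bookkeeping for the symbol calculus (and the verification that the auxiliary geometric data can indeed be chosen $G$-invariantly); everything else is a verbatim copy of the non-invariant argument in Lemma~\ref{lemma:special-diffop}.
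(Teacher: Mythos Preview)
Your proposal is correct and follows essentially the same approach as the paper's own proof, which is merely a two-line sketch invoking Lemma~\ref{lemma:special-diffop} and observing that the $G$-invariance of $\bullet$ forces $\mathsf{r}_g^* D_r(a) = D_r(a)$, hence $T^r_s \in \Gamma^\infty(S^\bullet HP)^G$, and that the $G$-invariance of $\nabla^P$ then carries through. You have simply spelled out the equivariance bookkeeping (in particular the uniqueness-of-symbols argument for the invariance of $T^r_s$) that the paper leaves implicit.
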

\begin{proof}
    According to the proof of Lemma \ref{lemma:special-diffop} the
    $G$-invariance of $\bullet$ yields $\mathsf{r}_g^* D_r (a) =
    D_r(a)$ and $T^r_s\in \Gamma^\infty(HP)^G$. The $G$-invariance of
    $\nabla^P$ finally proves the assertion.
\end{proof}
Again, such a map $T$ is an equivalence transformation and leads to
the following corollary which concludes the proof of Theorem
\ref{theorem:def-quant-principal}.
\begin{corollary}
    Every principal fibre bundle admits a deformation quantization
    which preserves the fibration.
\end{corollary}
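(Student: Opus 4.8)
The plan is to repeat, essentially verbatim, the argument used for the corresponding corollary in the surjective submersion case, but with Lemma~\ref{lemma:special-diffop} replaced by its $G$-invariant refinement proved just above. So I would start from an arbitrary deformation quantization $\bullet$ of the principal fibre bundle $\mathsf{p}: P \longrightarrow M$, which exists by the previous corollary (a special case of Theorem~\ref{theorem:Hochschild-cohomologies-principal-fibre-bundles}). The $G$-invariant version of Lemma~\ref{lemma:special-diffop} then supplies a formal series $T = \id + \sum_{r=1}^\infty \lambda^r T_r$ of $G$-invariant differential operators $T_r \in \Diffop(P)^G$ satisfying $T(\mathsf{p}^*a) = 1 \bullet a$ for all $a \in C^\infty(M)$.

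Next I would define $f \bullett a := T^{-1}\bigl(Tf \bullet a\bigr)$ for $f \in C^\infty(P)[[\lambda]]$ and $a \in C^\infty(M)[[\lambda]]$. Since $T$ is a formal series of differential operators starting with $\id$, it is invertible with an inverse of the same kind, so $\bullett$ is again a $(C^\infty(M)[[\lambda]],\star)$-right module structure of the form \eqref{eq:deformation-surjective-submersion}, equivalent to $\bullet$ in the sense of Definition~\ref{definition:def-quant-surj-sub} (the equivalence transformation being $T^{-1}$). The one point that requires some care is the $G$-invariance of $\bullett$: because every $T_r$ is $G$-invariant, $T$ commutes with all the pullbacks $\mathsf{r}_g^*$, hence so does $T^{-1}$, and combining this with the $G$-invariance \eqref{eq:deformation-right-module-principal} of $\bullet$ one obtains $\mathsf{r}_g^*(f \bullett a) = T^{-1}\bigl(\mathsf{r}_g^*(Tf) \bullet a\bigr) = T^{-1}\bigl((T\mathsf{r}_g^* f)\bullet a\bigr) = (\mathsf{r}_g^* f)\bullett a$. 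Thus $\bullett$ is indeed a deformation quantization of the principal fibre bundle, not merely of the underlying surjective submersion.

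It then remains to check that $\bullett$ respects the fibration. Using $T\mathsf{p}^*a = 1 \bullet a$, the right module property $(1\bullet a)\bullet b = 1 \bullet (a\star b)$ of $\bullet$, and once more $T\mathsf{p}^*(a\star b) = 1\bullet(a\star b)$, one computes $(\mathsf{p}^*a)\bullett b = T^{-1}\bigl((T\mathsf{p}^*a)\bullet b\bigr) = T^{-1}\bigl((1\bullet a)\bullet b\bigr) = T^{-1}\bigl(1\bullet(a\star b)\bigr) = T^{-1}\bigl(T\mathsf{p}^*(a\star b)\bigr) = \mathsf{p}^*(a\star b)$ for all $a,b \in C^\infty(M)$, which is precisely \eqref{eq:preserve-fibration-surj-sub}. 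There is no genuine obstacle left in this corollary itself: all the analytic substance has already been absorbed into Theorem~\ref{theorem:Hochschild-cohomologies-principal-fibre-bundles} and into the symbol-calculus construction behind the $G$-invariant operator $T$. The only thing one must not overlook is the bookkeeping that $T$, and therefore $T^{-1}$, stays inside $G$-invariant differential operators, which is exactly what guarantees that the modified product $\bullett$ does not leave the class of $G$-invariant deformations.
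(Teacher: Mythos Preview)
Your proof is correct and follows exactly the approach the paper intends: the corollary is stated without a separate proof, the text merely noting that ``such a map $T$ is an equivalence transformation'' and referring back to the identical computation carried out in the surjective-submersion case. Your additional explicit check of $G$-invariance of $\bullett$ via $T, T^{-1} \in \Diffop(P)^G[[\lambda]]$ is precisely the one extra bookkeeping step needed beyond the earlier corollary.
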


Since every principal fibre bundle is a surjective submersion, every
deformation quantization $\bullet$ and every decomposition
\eqref{eq:decomposition-diffop} lead to a bimodule structure
\eqref{eq:bimodule-surjective-submersion}. As we will see, there
exists a geometrically motivated choice of the decomposition
\eqref{eq:decomposition-diffop} such that the derived structures
$\bulletp$ and $\starp$ are $G$-invariant with respect to the
canonical left action of $G$ on $\Diffop(P)$ induced by
$\mathsf{r}_g^*$.  Clearly, $\Diffopver(P)$ is an invariant subspace.
\begin{lemma}
    \label{lemma:iso-diffop-surj-sub}
    Let $\mathsf{p}: P\longrightarrow M$ be a surjective submersion
    and let $\nabla^P$ be a torsion-free covariant
    derivative, which respects the vertical bundle. Then
    the symbol map
    \begin{equation}
        \label{eq:symbol-diffops}
        \sigma: \Diffop^\bullet (P)\longrightarrow
        \Gamma^\infty(\mathrm{S}^\bullet TP)=\bigoplus_{l=0}^\infty
        \Gamma^\infty(\mathrm{S}^l TP),
    \end{equation}
    with respect to $\nabla^P$ restricts to a vector space
    isomorphism
    \begin{equation}
        \label{eq:symbol-vertical-diffops}
        \sigma: \Diffopver(P) 
        \longrightarrow
        \Gamma^\infty(S^\bullet VP) 
        = \bigoplus_{l=0}^\infty \Gamma^\infty(\mathrm{S}^l VP)
    \end{equation}
    from the vertical differential operators to the vertical symmetric
    multi-vector fields.  If in addition $\mathsf{p}:P\longrightarrow
    M$ is a principal fibre bundle and $\nabla^P$ is $G$-invariant,
    then $\sigma$ is $G$-equivariant.
\end{lemma}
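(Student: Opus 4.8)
The plan is to lean on the symbol calculus of Remark~\ref{remark:symbol-calculus}: with respect to $\nabla^P$ the symbol map $\sigma$ is a $\mathbb{K}$-linear bijection of $\Diffop^\bullet(P)$ onto $\bigoplus_{l\ge0}\Gamma^\infty(S^l TP)$, its inverse sending $T = T_0 + \cdots + T_l$ to $\sum_j D_{T_j}$ with $\sigma(D_{T_j}) = T_j$. Granting this, the assertion reduces to two inclusions: \emph{(a)} $D_T \in \Diffopver(P)$ whenever $T \in \Gamma^\infty(S^\bullet VP)$, which together with $\sigma(D_T)=T$ shows $\sigma$ maps $\Diffopver(P)$ \emph{onto} $\Gamma^\infty(S^\bullet VP)$; and \emph{(b)} $\sigma(D) \in \Gamma^\infty(S^\bullet VP)$ for every $D \in \Diffopver(P)$. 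Injectivity of the restriction is automatic from injectivity of $\sigma$.

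For \emph{(a)} I would first establish the one geometric fact where the hypothesis that $\nabla^P$ respects $VP$ is used: $\mathsf{D}_P^{(q)}(\mathsf{p}^*a) \in \Gamma^\infty(S^q \ann(VP))$ for all $a \in C^\infty(M)$ and $q \ge 1$, where $\ann(VP)\subseteq T^*P$ is the annihilator. For $q=1$ this holds since $\mathsf{D}_P(\mathsf{p}^*a) = \D(\mathsf{p}^*a) = \mathsf{p}^*\D a$ annihilates vectors tangent to the fibres; for $q\ge 2$ one writes $\mathsf{D}_P^{(q)} = \frac{1}{q}\mathsf{D}_P\circ\mathsf{D}_P^{(q-1)}$ and checks that $\mathsf{D}_P$ carries $\Gamma^\infty(S^{q-1}\ann(VP))$ into $\Gamma^\infty(S^q\ann(VP))$: evaluating $\mathsf{D}_P\omega$ on vertical vector fields $Z_0,\ldots,Z_{q-1}$, each term $Z_i\bigl(\omega(Z_0,\ldots,\widehat{Z_i},\ldots,Z_{q-1})\bigr)$ vanishes because $\omega$ already annihilates vertical arguments, and each correction term $\omega(\ldots,\nabla^P_{Z_i}Z_k,\ldots)$ vanishes because $\nabla^P_{Z_i}Z_k$ is again vertical. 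Since $\mathsf{D}_P$ is a derivation of degree $+1$ for the symmetric product $\vee$, one has $\mathsf{D}_P^{(j)}(f\cdot\mathsf{p}^*a) = \sum_{p+q=j}(\mathsf{D}_P^{(p)}f)\vee(\mathsf{D}_P^{(q)}\mathsf{p}^*a)$, and pairing with $T \in \Gamma^\infty(S^j VP)$ kills every summand with $q\ge 1$ because $VP$ and $\ann(VP)$ pair to zero; the surviving $q=0$ term gives $D_T(f\cdot\mathsf{p}^*a) = D_T(f)\cdot\mathsf{p}^*a$. As $\Diffopver(P)$ is a linear subspace, this proves \emph{(a)}.

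For \emph{(b)} I would induct downward on the order $l$ of $D \in \Diffopver(P)$. The principal symbol $\sigma^{\mathrm{pr}}_l(D) \in \Gamma^\infty(S^l TP)$ is the connection-independent leading term of $\sigma(D)$, characterized by the fact that $\sigma^{\mathrm{pr}}_l(D)(\D f_1,\ldots,\D f_l)$ equals, up to a nonzero constant, the function $[\cdots[[D,f_1],f_2],\cdots,f_l]$ (an operator of order $0$), with $[D,f]$ the commutator with multiplication by $f$. If $D$ is vertical then $[D,\mathsf{p}^*a]=0$ for all $a\in C^\infty(M)$, and by the Jacobi identity one may move $\mathsf{p}^*a$ next to $D$ inside any iterated commutator, the extra terms being commutators of multiplication operators and hence zero; so the whole expression vanishes once some $f_i = \mathsf{p}^*a$. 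Thus $\iota_{\mathsf{p}^*\D a}\,\sigma^{\mathrm{pr}}_l(D)=0$ for all $a$, and since $\mathsf{p}$ is a submersion the covectors $\mathsf{p}^*\D a$ span $\ann(VP)$ pointwise, whence $\sigma^{\mathrm{pr}}_l(D) \in \Gamma^\infty(S^l VP)$ by the elementary fact that a symmetric tensor all of whose contractions with $\ann(VP)$ vanish lies in $S^l VP$. Now $D - D_{\sigma^{\mathrm{pr}}_l(D)}$ is vertical by \emph{(a)} and of order $\le l-1$; the base case $l=0$ is multiplication by a function in $C^\infty(P)=\Gamma^\infty(S^0 VP)$, so induction yields $\sigma(D)\in\Gamma^\infty(S^\bullet VP)$, completing the isomorphism $\sigma\colon\Diffopver(P)\to\Gamma^\infty(S^\bullet VP)$.

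For the $G$-equivariance in the principal bundle case: the symbol calculus is natural under any diffeomorphism $\phi$ of $P$ with $\phi^*\nabla^P=\nabla^P$, in the sense that $\phi^*\circ D_T\circ(\phi^*)^{-1} = D_{\phi^*T}$, which follows from the naturality of $\SP{\cdot,\cdot}$ together with $\phi^*\circ\mathsf{D}_P^{(j)} = \mathsf{D}_P^{(j)}\circ\phi^*$ — the latter being, for $\phi=\mathsf{r}_g$ and $G$-invariant $\nabla^P$, exactly Lemma~\ref{lemma:covariant-derivative-G-invariant}. Hence $\sigma(\mathsf{r}_g^*\circ D\circ(\mathsf{r}_g^*)^{-1}) = \mathsf{r}_g^*\sigma(D)$, i.e.\ $\sigma$ intertwines the induced $G$-actions on $\Diffop^\bullet(P)$ and on $\Gamma^\infty(S^\bullet TP)$; since the principal action carries fibres to fibres, $VP$ is $G$-invariant, so both $\Diffopver(P)$ and $\Gamma^\infty(S^\bullet VP)$ are $G$-stable and the restricted isomorphism is $G$-equivariant. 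The only genuinely non-formal step is the claim $\mathsf{D}_P^{(q)}(\mathsf{p}^*a)\in\Gamma^\infty(S^q\ann(VP))$ in part \emph{(a)}; everything else is bookkeeping with the symbol calculus and the standard commutator description of the principal symbol.
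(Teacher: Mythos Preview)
Your argument is correct and reaches the result by a genuinely different route than the paper. The paper chooses an auxiliary horizontal complement $HP$, invokes Lemma~\ref{lemma:connection-covariant-derivatives} to obtain the identity $\mathsf{D}_P^{(l)}\circ\mathsf{p}^* = \mathsf{p}^*\circ\mathsf{D}_M^{(l)}$, and then the Leibniz rule for $\mathsf{D}_P^{(j)}(f\cdot\mathsf{p}^*a)$ gives both directions of the equivalence ``$D$ vertical $\Leftrightarrow$ all $T_j$ vertical'' in one stroke. Your part~(a) avoids the auxiliary connection and works intrinsically with $\nabla^P$ alone; your part~(b) replaces the paper's symmetric argument by the commutator description of the principal symbol plus downward induction on the order. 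The paper's approach is shorter and symmetric in the two directions; yours is more self-contained (no appeal to Lemma~\ref{lemma:connection-covariant-derivatives}) and makes the role of the hypothesis ``$\nabla^P$ preserves $VP$'' visible in two separate places.

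One caveat in (a): your intermediate claim that $\mathsf{D}_P$ carries $\Gamma^\infty(S^{q-1}\ann(VP))$ into $\Gamma^\infty(S^q\ann(VP))$ is stated more strongly than what your check establishes, and in fact is \emph{false} as a general statement (for $\omega\in\Gamma^\infty(S^{q-1}\ann(VP))$ and $Z_1$ vertical one finds $(\mathsf{D}_P\omega)(Z_1,Z_2,\ldots,Z_q)=(\nabla^P_{Z_1}\omega)(Z_2,\ldots,Z_q)$, which need not vanish for arbitrary $Z_2,\ldots,Z_q$). What your computation actually proves is the weaker assertion that $\mathsf{D}_P^{(q)}(\mathsf{p}^*a)$ vanishes when \emph{all} $q$ arguments are vertical, and this is precisely what is needed to pair against $T\in\Gamma^\infty(S^jVP)$ in the Leibniz expansion. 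So the logic is sound; just rephrase the inductive hypothesis as ``$\mathsf{D}_P^{(q)}(\mathsf{p}^*a)(V_1,\ldots,V_q)=0$ for vertical $V_i$'' rather than the membership in $S^q\ann(VP)$.
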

\begin{proof}
    That \eqref{eq:symbol-diffops} is an isomorphism is a general
    consequence of any symbol calculus, confer
    Remark~\ref{remark:symbol-calculus}. In the present situation we
    additionally choose a connection on $P$ and consider the covariant
    derivative $\nabla^M$ of
    Lemma~\ref{lemma:connection-covariant-derivatives}. With
    \eqref{eq:symmetric-covariant-derivatives-relation} and the
    Leibniz rule $\mathsf{D}_P^{(j)}(fh) = \sum_{s=0}^j
    \mathsf{D}_P^{(s)} f \vee \mathsf{D}_P^{(j-s)} h$ we find that the
    property $D(f\cdot \mathsf{p}^*a)=D(f)\cdot \mathsf{p}^*a$ of
    vertical differential operators $D=\sum_{j=0}^l D_{T_j}\in
    \Diffopver^l (P)$ is equivalent to $T_j\in
    \Gamma^\infty(\mathrm{S}^jVP)$ for all $j=0,\ldots ,l$.  Due to
    Lemma~\ref{lemma:covariant-derivative-G-invariant}, $G$-invariance
    of $\nabla^P$ finally leads to $\mathsf{r}_g^* \circ D_{T_{j}} =
    D_{(\mathsf{r}^g)^*T_{j}} \circ \mathsf{r}_g^*$ for all $g \in G$
    and $j = 0, \ldots , l$.
\end{proof}

The choice of a principal connection, i.e.\  a $G$-invariant
decomposition $TP = VP \oplus HP$ with $T \mathsf{r}_g HP= HP$, gives
rise to a $G$-invariant decomposition
\begin{equation}
    \label{eq:decomposition-vector-fields}
    \Gamma^\infty(\mathrm{S}^\bullet TP) =
    \Gamma^\infty(\mathrm{S}^\bullet VP) \oplus
    \overline{\Gamma^\infty(\mathrm{S}^\bullet VP)},
\end{equation}
where $\overline{\Gamma^\infty(\mathrm{S}^\bullet VP)}$ is the
complementary vector space of $\Gamma^\infty(\mathrm{S}^\bullet VP)$
induced by $TP = VP \oplus HP$.  Using the symbol map of
Lemma~\ref{lemma:iso-diffop-surj-sub} with respect to a $G$-invariant
covariant derivative, $\overline{\Diffopver(P)} =
\sigma^{-1}(\overline{\Gamma^\infty(\mathrm{S}^\bullet VP)})$ is a
vector space which leads to a particular decomposition
\eqref{eq:decomposition-diffop}
\begin{equation}
    \label{eq:decomposition-diffop-special}
    \Diffop(P) = 
    \Diffopver (P) \oplus
    \overline{\Diffopver(P)}.
\end{equation}
Due to $G$-equivariance of $\sigma$ this decomposition is
$G$-invariant, too.  Obviously,
\eqref{eq:decomposition-diffop-special} depends on the choices of the
principal connection and the covariant derivative.  Altogether, this
leads to the final theorem of this section.
\begin{theorem}
    \label{theorem:CooleDeformation}
    Let $\mathsf{p}: P\longrightarrow M$ be a principal fibre bundle
    with structure Lie group $G$ and let $\bullet$ be a deformation
    quantization with respect to a star product $\star$ on $M$. Then
    there exists a bimodule structure
    \begin{equation}
        \label{eq:bimodule-principal}
        {}_{( \Diffopver (P)
          [[\lambda]],\starp)} (\bulletp,C^\infty(P)
        [[\lambda]],\bullet)_{(C^\infty(M)[[\lambda]],\star)}
    \end{equation}
    as in \eqref{eq:bimodule-surjective-submersion} with the further
    property that $\starp$ and $\bulletp$ are $G$-invariant, i.e.
    \begin{equation}
        \label{eq:starp-G-invariant}
        \mathsf{r}_g^* (A \starp B)
        = (\mathsf{r}_g^* A) \starp (\mathsf{r}_g^* B),
    \end{equation}
    \begin{equation}
        \label{eq:bulletp-G-invariant}
        \mathsf{r}_g^* (A \bulletp f) 
        =  (\mathsf{r}_g^* A)\bulletp (\mathsf{r}_g^*f)
    \end{equation}
    for all $A, B \in \Diffopver(P)[[\lambda]]$, $f \in
    C^\infty(P)[[\lambda]]$ and $g \in G$. Moreover, $\starp$ is
    unique up to $G$-equivariant equivalence and the algebras in
    \eqref{eq:bimodule-principal} are mutual commutants.
\end{theorem}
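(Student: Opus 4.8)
The plan is to combine the abstract commutant construction of Proposition~\ref{proposition:Kommutante} with the geometrically adapted, $G$-invariant splitting \eqref{eq:decomposition-diffop-special}, and then check that $G$-invariance propagates through the recursion. First I would note that Theorem~\ref{theorem:Hochschild-cohomologies-surjective-submersions} gives $\HHtype^0(C^\infty(M),\Diffop(P)) = \Diffopver(P)$ and $\HHtype^1 = \HHtype^2 = \{0\}$, so Proposition~\ref{proposition:Kommutante} applies verbatim with $\mathcal{A} = C^\infty(M)$, $\mathcal{E} = C^\infty(P)$, $\mathcal{D} = \Diffop(P)$. Choosing the complementary subspace to be $\overline{\Diffopver(P)} = \sigma^{-1}(\overline{\Gamma^\infty(\mathrm{S}^\bullet VP)})$ as in \eqref{eq:decomposition-diffop-special}, the proposition produces the map $\rho' = \id + \sum_{r\ge 1}\lambda^r\rho'_r$, a $\mathbb{K}[[\lambda]]$-linear bijection onto the commutant $\mathcal{K}$ of \eqref{eq:iso-commutant-surj-sub}, together with the induced deformation $\mu'$ of $\Diffopver(P)$. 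Setting $A\bulletp f = \rho'(A)f$ and $A\starp B = \mu'(A,B)$ then yields the bimodule \eqref{eq:bimodule-principal}, and Proposition~\ref{proposition:MutualCommutants} already gives that the two algebras are mutual commutants; so the only genuinely new content is the $G$-invariance \eqref{eq:starp-G-invariant}, \eqref{eq:bulletp-G-invariant} and the corresponding uniqueness statement.

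For $G$-invariance I would argue by induction on the order of $\lambda$, exactly along the recursion in the proof of Proposition~\ref{proposition:Kommutante}. Two ingredients make the induction go through. First, by the existence part of Theorem~\ref{theorem:def-quant-principal} (the corollary right before Lemma~\ref{lemma:covariant-derivative-G-invariant}) we may assume $\bullet$ is $G$-invariant, i.e.\ $\mathsf{r}_g^*(f\bullet a) = (\mathsf{r}_g^*f)\bullet a$; equivalently each $\rho_r$ is $G$-invariant in the sense that $\mathsf{r}_g^*\circ\rho_r(a) = \rho_r(a)\circ\mathsf{r}_g^*$. Second, the chosen complement $\overline{\Diffopver(P)}$ is $G$-invariant, since $\sigma$ is $G$-equivariant by Lemma~\ref{lemma:iso-diffop-surj-sub} and the splitting \eqref{eq:decomposition-vector-fields} comes from a principal connection. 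Now suppose $\rho'_1(A),\dots,\rho'_r(A)$ have been constructed and are $G$-invariant whenever $A\in\Diffopver(P)^G$ is. The error cochain $R_r\in\HCtype^1(C^\infty(M),\Diffop(P))$ whose vanishing is enforced in order $\lambda^{r+1}$ is built polynomially out of $\rho_0,\dots,\rho_r$, $\rho'_1(A),\dots,\rho'_r(A)$, $A$ and the $\star$-cochains $C_s$; since $\star$ on $M$ is untouched by $\mathsf{r}_g$ and all these pieces intertwine $\mathsf{r}_g^*$, the operator $R_r$ itself intertwines $\mathsf{r}_g^*$ on the left, i.e.\ $R_r$ takes values in $\Diffop(P)^G$. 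As $R_r = \delta(\text{correction})$ and the correction is chosen \emph{uniquely} in $\overline{\Diffopver(P)}$, and because $\mathsf{r}_g^*$ commutes with $\delta$ (the Hochschild differential uses only $\star$ on $M$ and the undeformed $\rho_0$, both $G$-invariant) and preserves the $G$-invariant complement $\overline{\Diffopver(P)}$, uniqueness forces $\rho'_{r+1}(A)$ to be $G$-invariant as well. This establishes that $\rho'$ restricts to $\Diffopver(P)^G[[\lambda]]$ and is $G$-equivariant, which immediately gives \eqref{eq:bulletp-G-invariant} from $A\bulletp f = \rho'(A)f$ and then \eqref{eq:starp-G-invariant} from $\rho'(A\starp B) = \rho'(A)\circ\rho'(B)$ and injectivity of $\rho'$.

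For the uniqueness up to $G$-equivariant equivalence I would invoke Theorem~\ref{theorem:Hochschild-cohomologies-principal-fibre-bundles}, which gives $\HHtype^1(C^\infty(M),\Diffop(P)^G) = \{0\}$: any two $G$-invariant deformation quantizations $\bullet$, $\tilde\bullet$ are then connected by a $G$-invariant equivalence $T$ (Lemma~\ref{lemma:ObstructionForEquivalence} applied to the invariant complex), $T$ conjugates the respective commutants $\mathcal{K}$, $\tilde{\mathcal{K}}$, and hence intertwines $\rho'$ and $\tilde{\rho'}$ after adjusting the complements — the same bookkeeping as part \textit{v.)} of Proposition~\ref{proposition:Kommutante}, now carried out $G$-equivariantly because $T$ and the complements are $G$-invariant. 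The main obstacle, such as it is, is purely bookkeeping: one must verify that every object entering the recursion of Proposition~\ref{proposition:Kommutante} — the error term $R_r$, the Hochschild differential $\delta$, the splitting, the uniqueness of the chosen preimage — is compatible with pullback by $\mathsf{r}_g$, so that $G$-invariance is not destroyed at any stage. There is no cohomological difficulty beyond what the earlier sections already supply; the content is entirely in checking that the geometric choice of $\overline{\Diffopver(P)}$ in \eqref{eq:decomposition-diffop-special}, which is $G$-invariant precisely because $\sigma$ is $G$-equivariant and the connection is principal, is the right one to make the abstract machine output $G$-invariant structures.
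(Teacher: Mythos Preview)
Your approach is the paper's, and the ingredients you identify --- the $G$-invariance of $\bullet$, the compatibility of $\delta$ with $\mathsf{r}_g^*$, and the $G$-invariance of the complement \eqref{eq:decomposition-diffop-special} --- are exactly right. But your induction hypothesis is too weak for the conclusion you draw: you assume and prove only that $\rho'_s(A)$ is $G$-invariant whenever $A$ is, yet \eqref{eq:bulletp-G-invariant} is stated for \emph{all} $A \in \Diffopver(P)[[\lambda]]$, not just the invariant ones, so what you actually need is the full equivariance $\mathsf{r}_g^*\,\rho'_s(A) = \rho'_s(\mathsf{r}_g^* A)$ (with $\mathsf{r}_g^*$ acting on $\Diffop(P)$ by conjugation). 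The same machinery gives this immediately once the induction is set up correctly: applying $\mathsf{r}_g^*$ to the order-$\lambda^{r+1}$ error for $A$ yields the error for $\mathsf{r}_g^* A$, so $\mathsf{r}_g^*\,\rho'_{r+1}(A)$ and $\rho'_{r+1}(\mathsf{r}_g^* A)$ both lie in the $G$-stable complement $\overline{\Diffopver(P)}$ and satisfy the same defining equation, hence coincide by uniqueness. This is precisely how the paper argues. A minor aside: you need not invoke Theorem~\ref{theorem:def-quant-principal} to ``assume'' $\bullet$ is $G$-invariant --- that is already part of the hypothesis, since $\bullet$ is a deformation quantization of the principal bundle in the sense of Definition~\ref{definition:def-quant-principal}.
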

\begin{proof}
    Regarding the recursive construction of $\rho'= \id +
    \sum_{r=1}^\infty \lambda^r \rho'_r$ from
    Proposition~\ref{proposition:Kommutante} an induction shows that
    $\rho'$ is $G$-invariant: indeed, the $G$-invariance of $\bullet$
    and $\mathsf{r}_g^* (\delta D (a)) = \delta(\mathsf{r}_g^* D)(a)$
    for all $D \in \Diffop(P)$ and $a \in C^\infty(M)$ show that for
    all $A \in \Diffopver(P)$ and $r \ge 1$ the element
    $\mathsf{r}_g^* \rho'_r(A)$ satisfies the same defining equation
    as $\rho'_r(\mathsf{r}_g^* A)$. With the $G$-invariance of the
    decomposition \eqref{eq:decomposition-diffop-special} these
    elements are equal by uniqueness. Thus we have the $G$-invariance
    of $\rho'$ and \eqref{eq:bulletp-G-invariant} follows directly. By
    definition, this implies \eqref{eq:starp-G-invariant}. The
    uniqueness of $\starp$ up to $G$-equivariant equivalence is also
    clear as $\bulletp$ establishes a $G$-equivariant isomorphism to
    the commutant of $(C^\infty(M)[[\lambda]], \star)$.
\end{proof}
\begin{remark}
    \label{remark:DeformedUniversalEnvelopping}
    From this point of view one can now understand the results of
    \cite{jurco.schraml.schupp.wess:2000a} in a more geometric way,
    independent of the concrete model of the noncommutative gauge
    theory: For two infinitesimal classical gauge transformations
    $\xi, \eta \in \mathfrak{gau}(P) = \Gamma^\infty(VP) \subseteq
    \Diffop_{\mathrm{ver}}^1(P)$ the deformed action gives
    \begin{equation}
        \label{eq:xietaCommutator}
        \xi \bulletp (\eta \bulletp f) - \eta \bulletp (\xi \bulletp f)
        = (\xi \starp \eta - \eta \starp \xi) \bulletp f
        = \Lie_{[\xi, \eta]} f + \cdots,
    \end{equation}
    where in general the higher order contributions are
    \emph{nontrivial}. The Lie algebra of infinitesimal gauge
    transformations $\mathfrak{gau}(P)$ is in general no longer a Lie
    subalgebra of $(\Diffopver(P)[[\lambda]], \starp)$ with respect to
    the $\starp$-commutator. This was observed in
    \cite{jurco.schraml.schupp.wess:2000a} on the level of associated
    vector bundles for particular situations.
\end{remark}

%
%

\section{Associated Vector Bundles}
\label{sec:AssociatedBundles}

The construction of associated vector bundles is one of the most
important features of principal fibre bundles. The present section
shows that our Definition~\ref{definition:def-quant-principal} of a
deformation quantization of principal fibre bundles naturally leads to
a deformation quantization of associated vector bundles.

Deformation quantization of vector bundles
$\mathsf{p}:E\longrightarrow M$ is already established and can be put
down to deformation quantization of finitely generated projective
modules, since the sections $\Gamma^\infty(E)$ are such a right module
over $C^\infty(M)$. The well-known definitions and results can all be
found in \cite{bursztyn.waldmann:2000b} and \cite{waldmann:2005a}.
Altogether, one considers the deformed bimodule
\begin{equation}
    \label{eq:deformed-bimodule-vector-bundle}
    {}_{(\Gamma^\infty(\End(E)) [[\lambda]],\star'_E)} (\bullet'_E,
    \Gamma^\infty(E) [[\lambda]], \bullet)_ {(C^\infty(M)
      [[\lambda]],\star)}, 
\end{equation}
where the right module structure itself is defined as a deformation
quantization of the vector bundle $E$.  It is known, confer
\cite{waldmann:2005a}, Thm.~1, that for a given star product $\star$
the deformed bimodule structures $\bullet'_E$, $\bullet$ and the
algebra structure $\star'_E$ are unique up to equivalence and that the
two deformed algebras $(\Gamma^\infty(\End(E)) [[\lambda]],\star'_E)$
and $(C^\infty(M) [[\lambda]],\star)$ are mutual commutants.

In the following we use some basic facts about the geometry of
associated vector bundles which for example can be found in detail in
\cite{kolar.michor.slovak:1993a}. Let $\mathsf{p}:P\longrightarrow M$
be a principal fibre bundle with structure Lie group $G$.  The
associated vector bundle with respect to a representation $\pi:
G\longrightarrow \End(V)$ of $G$ on a finite dimensional vector space
$V$ over $\mathbb{C}$ from the left is denoted by $P\times_G V$.  All
results of this section are based on the well-known isomorphism
\begin{equation}
    \label{eq:isomorphism-sections}
    \Gamma^\infty(P\times_G V)
    \cong
    \left(C^\infty(P)\otimes V\right)^G
\end{equation}
between the smooth sections of the associated bundle and the
$G$-invariant $V$-valued functions on $P$ with respect to the left
action $\mathsf{r}_g^* \otimes \pi(g)$ on $C^\infty(P) \otimes V$.
Together with the according left action on the algebra $\Diffop(P)
\otimes \End(V)$ one finds the following lemma.
\begin{lemma} 
    \label{lemma:extended-bimodule}
    The $G$-invariant bimodule structure 
    \begin{equation}
        \label{eq:bimodule-principal-again}
        {}_{( \Diffopver (P)
          [[\lambda]],\starp)} (\bulletp,C^\infty(P)
        [[\lambda]],\bullet)_{(C^\infty(M)[[\lambda]],\star)} 
    \end{equation}
    of Theorem~\ref{theorem:CooleDeformation} yields a
    bimodule structure 
    \begin{equation}
        \label{eq:extended-bimodule}
        {}_{((\Diffopver (P)\otimes \End(V))^G [[\lambda]],\starp)}
        (\bulletp, (C^\infty(P)\otimes V)^G[[\lambda]],
        \bullet )_ 
        {(C^\infty(M) [[\lambda]],\star)}.
    \end{equation}
\end{lemma}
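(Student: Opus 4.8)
The plan is to obtain the bimodule structure \eqref{eq:extended-bimodule} from \eqref{eq:bimodule-principal-again} by the functorial operation of tensoring everything with the fixed finite-dimensional vector space $V$ and then taking $G$-invariants, where $G$ acts diagonally via $\mathsf{r}_g^*$ on the ``$P$-factors'' and via $\pi(g)$ on the ``$V$-factors''. Concretely, I would first extend all the $\lambda$-linear structures $\bulletp$, $\bullet$, and $\starp$ to the $V$-tensored spaces by setting $(A \otimes S) \bulletp (f \otimes v) = (A \bulletp f) \otimes (S(v))$ and $(f \otimes v) \bullet a = (f \bullet a) \otimes v$ for $A \in \Diffopver(P)$, $S \in \End(V)$, $f \in C^\infty(P)$, $v \in V$, $a \in C^\infty(M)$, and likewise $(A \otimes S) \starp (B \otimes T) = (A \starp B) \otimes (ST)$, all extended $\mathbb{C}[[\lambda]]$-(multi)linearly. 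Since $\End(V)$ is just an ordinary associative algebra acting on $V$ and is untouched by the deformation, the associativity of $\starp$ on $(\Diffopver(P) \otimes \End(V))[[\lambda]]$, the module axioms for $\bulletp$ and $\bullet$ on $(C^\infty(P) \otimes V)[[\lambda]]$, and the commuting of the two actions all follow immediately from the corresponding statements in Theorem~\ref{theorem:CooleDeformation} by a routine factor-wise check.

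The second step is to verify that the diagonal $G$-action is by automorphisms of all these structures, so that the invariant subspaces inherit the bimodule structure. On $\Diffopver(P) \otimes \End(V)$ the action is $D \otimes S \mapsto (\mathsf{r}_g^* D) \otimes (\pi(g) S \pi(g)^{-1})$ and on $C^\infty(P) \otimes V$ it is $f \otimes v \mapsto (\mathsf{r}_g^* f) \otimes \pi(g) v$; that these respect $\starp$, $\bulletp$, and $\bullet$ is precisely the $G$-invariance \eqref{eq:starp-G-invariant}, \eqref{eq:bulletp-G-invariant} of Theorem~\ref{theorem:CooleDeformation} on the $P$-factors together with the fact that $\pi$ is a group homomorphism on the $V$-factors, and on the $M$-factor the action is trivial. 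Hence passing to $G$-invariants is compatible with all three operations: $(\Diffopver(P) \otimes \End(V))^G[[\lambda]]$ is a subalgebra for $\starp$, $(C^\infty(P) \otimes V)^G[[\lambda]]$ is a sub-bimodule, and the left action of $(C^\infty(M)[[\lambda]],\star)$ already lands in the invariants since it is unaffected by $G$. Finally one notes that these invariant subspaces are the correct zeroth-order objects, identifying $(C^\infty(P) \otimes V)^G$ with $\Gamma^\infty(P \times_G V)$ via \eqref{eq:isomorphism-sections}, so that \eqref{eq:extended-bimodule} is indeed a deformation of the classical associated-bundle bimodule.

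The only mild subtlety — and the point I would state most carefully — is the bookkeeping with formal power series: one must check that the invariant subspaces of the $\lambda$-adic completions are exactly the $\lambda$-adic completions of the invariant subspaces, i.e. that $\big((C^\infty(P) \otimes V)[[\lambda]]\big)^G = (C^\infty(P) \otimes V)^G[[\lambda]]$ and similarly for the operator space. This is immediate because $G$ acts $\mathbb{C}[[\lambda]]$-linearly and order-by-order, so an element is $G$-invariant precisely when each of its $\lambda$-coefficients is; no completeness argument is needed beyond this order-wise remark. Everything else is the factor-wise verification sketched above, which I would leave to the reader as in the style of the surrounding lemmas.
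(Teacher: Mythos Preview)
Your proposal is correct and follows essentially the same route as the paper's one-line proof: extend the structures trivially on the $V$- and $\End(V)$-factors, then use the $G$-invariance of $\starp$, $\bulletp$, $\bullet$ from Theorem~\ref{theorem:CooleDeformation} to conclude that the invariants form a subalgebra and sub-bimodule. The only slip is that you once call the $(C^\infty(M)[[\lambda]],\star)$-action a \emph{left} action when it is the right one via $\bullet$; otherwise your more detailed write-up (including the order-by-order remark on invariants in $[[\lambda]]$) simply spells out what the paper leaves implicit.
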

\begin{proof}
    With the obvious new structures the $G$-invariance of $\starp$,
    $\bulletp$ and $\bullet$ ensures that the considered $G$-invariant
    elements form a sub-algebra and a sub-bimodule.
\end{proof}

Using the isomorphism \eqref{eq:isomorphism-sections}, the right module
structure of \eqref{eq:extended-bimodule} directly leads to a
deformation quantization of the associated vector bundle.
\begin{theorem}[Deformation quantization of associated vector bundles]
    Let $\bullet$ be a deformation quantization of a principal fibre
    bundle. Then, by \eqref{eq:extended-bimodule}, every associated
    vector bundle inherits a deformation quantization in the sense of
    \cite{bursztyn.waldmann:2000b}.
\end{theorem}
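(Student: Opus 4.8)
The plan is to transport the right module structure appearing in \eqref{eq:extended-bimodule} along the classical isomorphism \eqref{eq:isomorphism-sections} and then to verify that the result meets the definition of a deformation quantization of the vector bundle $E = P \times_G V$ from \cite{bursztyn.waldmann:2000b}: one needs an associative right $(C^\infty(M)[[\lambda]], \star)$-module structure on $\Gamma^\infty(E)[[\lambda]]$ which in zeroth order reduces to the classical multiplication of sections by functions and whose higher-order terms are differential in both arguments. Since $G$ acts trivially on $M = P/G$, the algebra $(C^\infty(M)[[\lambda]], \star)$ is exactly the one occurring in \eqref{eq:extended-bimodule}, and by Lemma~\ref{lemma:extended-bimodule} the space $(C^\infty(P) \otimes V)^G[[\lambda]]$ is already a sub-bimodule, so associativity of the restricted right action is automatic.

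First I would note that extending $\bullet$ to $C^\infty(P) \otimes V$ by letting it act on the $C^\infty(P)$ tensor factor only preserves $G$-invariant elements: writing $\bullet = \sum_{r} \lambda^r \rho_r$, the principal-bundle invariance \eqref{eq:deformation-right-module-principal} forces, order by order, $\mathsf{r}_g^* \circ \rho_r(\cdot, a) = \rho_r(\cdot, a) \circ \mathsf{r}_g^*$ for all $a$ and all $g \in G$; hence each $\rho_r(\cdot, a) \otimes \id_V$ commutes with $\mathsf{r}_g^* \otimes \pi(g)$ and in particular maps $(C^\infty(P) \otimes V)^G$ into itself. In zeroth order $s \bullet a = s \cdot \mathsf{p}^* a$, which under \eqref{eq:isomorphism-sections} is precisely the classical multiplication of a section of $E$ by a function on $M$. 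So the transported structure is a formal associative deformation of the classical right module structure on $\Gamma^\infty(E)$.

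It remains to check that the transported correction terms are genuinely bidifferential. Each $\rho_r$ is a bidifferential operator $C^\infty(P) \times C^\infty(M) \to C^\infty(P)$, so $\rho_r \otimes \id_V$ is bidifferential and, by the previous step, $G$-invariant in its total-space argument. The standard description of differential operators on associated bundles, see \cite{kolar.michor.slovak:1993a}, then shows that such a $G$-invariant operator descends under \eqref{eq:isomorphism-sections} to a bidifferential operator on $\Gamma^\infty(E)$: over a trivializing open set $U \subseteq M$ the isomorphism \eqref{eq:isomorphism-sections} becomes $\Gamma^\infty(E|_U) \cong C^\infty(U) \otimes V$ and intertwines fibrewise $G$-averaging with the identity, so that differentiality is visible order by order, and one patches the local expressions with a partition of unity exactly as in the proofs of Theorems~\ref{theorem:Hochschild-cohomologies-surjective-submersions} and \ref{theorem:Hochschild-cohomologies-principal-fibre-bundles}. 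I expect this local-to-global bookkeeping, rather than any conceptual point, to be the only obstacle. Combining the three steps, the right module structure in \eqref{eq:extended-bimodule} is, via \eqref{eq:isomorphism-sections}, a deformation quantization of the associated vector bundle $P \times_G V$ in the sense of \cite{bursztyn.waldmann:2000b}; moreover the deformed algebra $((\Diffopver(P) \otimes \End(V))^G[[\lambda]], \starp)$ maps naturally onto the corresponding deformed endomorphism algebra $(\Gamma^\infty(\End(E))[[\lambda]], \star'_E)$.
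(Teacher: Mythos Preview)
Your approach is correct and coincides with the paper's: the paper treats the theorem as an immediate consequence of Lemma~\ref{lemma:extended-bimodule} together with the isomorphism \eqref{eq:isomorphism-sections}, stating without further proof that the right module structure in \eqref{eq:extended-bimodule} transported via \eqref{eq:isomorphism-sections} is the desired deformation quantization. You have simply spelled out the verification (zeroth order, $G$-invariance, bidifferentiality) in more detail than the paper does.

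Two minor remarks. First, your closing sentence about the surjection onto $(\Gamma^\infty(\End(E))[[\lambda]], \star'_E)$ is not part of this theorem; that is the content of the subsequent Theorem~\ref{theorem:associated-commutant} and requires the additional argument via \eqref{eq:map-on-endos} and the surjectivity lemma. Second, your careful local check of bidifferentiality is sound but somewhat heavier than needed: since the $\rho_r$ are already bidifferential on $P$ and $G$-invariant, and since in a principal bundle trivialization $\mathsf{p}^{-1}(U) \cong U \times G$ the isomorphism \eqref{eq:isomorphism-sections} identifies $G$-invariant $V$-valued functions with $C^\infty(U, V)$ via evaluation at $e$, the descent is almost tautological; no partition-of-unity argument in the style of Theorem~\ref{theorem:Hochschild-cohomologies-principal-fibre-bundles} is really required.
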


Since \eqref{eq:extended-bimodule} is a bimodule structure, the
algebra $((\Diffopver (P)\otimes \End(V))^G
[[\lambda]],\starp)$ can be related to the commutant of the deformed
vector bundle. To this end we first recall that $P\times_G \End(V) \cong
\End(P\times_G V)$, where $\End(P\times_G V)$ is the bundle of
endomorphisms of $P\times_G V$. One gets 
\begin{equation}
    \label{eq:isomorphism-sections-endos}
    (C^\infty(P)\otimes \End(V))^G \cong
    \Gamma^\infty(\End(P\times_G V))
\end{equation}
and with \eqref{eq:isomorphism-sections} the action of $f\otimes
L\in(C^\infty(P)\otimes \End(V))^G$ on $h\otimes w\in
(C^\infty(P)\otimes V)^G$ is given by $(f\otimes L)(h\otimes
w)= fh\otimes L(w)$. In analogy to that one defines the map
\begin{equation}
    \label{eq:map-on-endos}
    \psi: (\Diffopver(P) \otimes \End(V))^G
    \longrightarrow \Gamma^\infty(\End(P\times_G V))
\end{equation}
via $(A\otimes L)(h\otimes w)=A(h)\otimes L(w)$. By
\eqref{eq:isomorphism-sections-endos} $\psi$ is surjective. 
Altogether we have the commutative diagram
\begin{equation}
    \label{eq:diagramm-commutant}
    \xymatrix{
    (C^\infty(P)\otimes \End(V))^G \ar@{_{(}->}[d]
    \ar^{\cong}[rd]& \\
    (\Diffop_{\mathrm{ver}}(P)\otimes \End(V))^G \ar[r]^(.65){\psi} &
    **[r]\Gamma^\infty(\End(P\times_G V)).
    }
\end{equation}
With these arrangements we can investigate the commutant of the
deformed associated vector bundle.  By \eqref{eq:extended-bimodule},
for every element $D\in (\Diffopver(P) \otimes
\End(V))^G[[\lambda]]$ there exists a unique element $\phi(D)\in
\End_{C^\infty(M)[[\lambda]]}(\Gamma^\infty(P\times _G
V)[[\lambda]], \bullet)$ in the commutant with $D \bulletp s = \phi(D)s$ for
all $s\in \Gamma^\infty(P\times _G V)[[\lambda]]$.
\begin{lemma}
    The map
    \begin{equation}
        \label{eq:surjection-commutant}
        \phi: (\Diffopver(P) \otimes
        \End(V))^G[[\lambda]]
        \longrightarrow 
        \End_{C^\infty(M)[[\lambda]]}(\Gamma^\infty(P\times
        _G V) [[\lambda]], \bullet)
    \end{equation}
    is surjective.
\end{lemma}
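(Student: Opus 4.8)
The plan is to argue by $\lambda$-adic successive approximation, exactly in the spirit of the order-by-order constructions used throughout this paper, with the \emph{classical} surjectivity of $\psi$ from \eqref{eq:map-on-endos} as the only geometric input. First I would note that $\phi$ is $\mathbb{C}[[\lambda]]$-linear, since $\bulletp$ is $\mathbb{C}[[\lambda]]$-bilinear, and that $\phi$ reduces to $\psi$ modulo $\lambda$: the product $A \bulletp s$ equals the ordinary action of $A \in (\Diffopver(P) \otimes \End(V))^G$ on $(C^\infty(P) \otimes V)^G$ modulo $\lambda$, and under the identification \eqref{eq:isomorphism-sections} this ordinary action is precisely $\psi(A)$ acting on sections. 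Hence $\phi\bigl(\sum_r \lambda^r D_r\bigr) \equiv \psi(D_0) \pmod{\lambda}$.

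Second, I would record the elementary observation that the leading-order coefficient of any element of the commutant $\End_{C^\infty(M)[[\lambda]]}(\Gamma^\infty(P\times_G V)[[\lambda]],\bullet)$ is automatically a \emph{classical} $C^\infty(M)$-module endomorphism. Indeed, for such an element $A = \sum_{r\ge r_0}\lambda^r A_r$ with $A_{r_0}\ne 0$, comparing the coefficients of $\lambda^{r_0}$ on the two sides of $A(s\bullet a)=A(s)\bullet a$ and using that $\bullet$ coincides with the classical $C^\infty(M)$-module action modulo $\lambda$ forces $A_{r_0}(s\cdot \mathsf{p}^*a) = A_{r_0}(s)\cdot \mathsf{p}^*a$. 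Since $\Gamma^\infty(P\times_G V)$ is finitely generated and projective over $C^\infty(M)$, this means $A_{r_0}\in \Gamma^\infty(\End(P\times_G V))$, which by \eqref{eq:isomorphism-sections-endos} and the commutative diagram \eqref{eq:diagramm-commutant} is exactly $\image\psi$. So I may pick $D_{r_0}\in (\Diffopver(P)\otimes\End(V))^G$ with $\psi(D_{r_0}) = A_{r_0}$; then $A - \lambda^{r_0}\phi(D_{r_0})$ again lies in the commutant, which is a $\mathbb{C}[[\lambda]]$-submodule, and it vanishes to order at least $r_0+1$.

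Iterating this step produces coefficients $D_r \in (\Diffopver(P)\otimes\End(V))^G$ for all $r$ such that $D = \sum_r \lambda^r D_r \in (\Diffopver(P)\otimes\End(V))^G[[\lambda]]$ satisfies $\phi(D)=A$; the passage to the limit is just $\lambda$-adic completeness together with the $\mathbb{C}[[\lambda]]$-linearity of $\phi$. The only step that is not pure formal bookkeeping is the classical surjectivity of $\psi$ --- that is, the identification of the $C^\infty(M)$-module endomorphisms of $\Gamma^\infty(P\times_G V)$ with sections of $\End(P\times_G V)$, and the fact that these are already realised inside $(\Diffopver(P)\otimes\End(V))^G$ by operators of order zero, i.e.\ via the vertical inclusion in \eqref{eq:diagramm-commutant}. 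Everything else mirrors the arguments already carried out for Theorem~\ref{theorem:CooleDeformation} and Proposition~\ref{proposition:MutualCommutants}, so I expect the main effort to be purely organisational rather than a genuine obstacle.
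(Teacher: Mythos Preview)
Your proposal is correct and follows essentially the same $\lambda$-adic successive approximation as the paper's proof: identify the lowest-order coefficient of a commutant element as a classical $C^\infty(M)$-endomorphism, lift it via the surjectivity of $\psi$, subtract, and iterate using the $\mathbb{C}[[\lambda]]$-linearity of $\phi$. You have merely spelled out in more detail the steps the paper treats as evident (why the lowest order lies in $\Gamma^\infty(\End(P\times_G V))$, and the role of $\lambda$-adic completeness).
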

\begin{proof}
    Let $K=\sum_{r=0}^\infty \lambda^r K_r \in
    \End_{C^\infty(M)[[\lambda]]} (\Gamma^\infty(P\times _G V)
    [[\lambda]], \bullet)$. By definition, it is clear that $K_0\in
    \Gamma^\infty(\End(P\times_G V))$. With \eqref{eq:map-on-endos}
    there exists an element $D_0\in (\Diffopver(P) \otimes \End(V))^G$
    with $K_0=\psi(D_0)$. With $\phi(D_0)= \sum_{r=0}^\infty\lambda^r
    \phi(D_0)_r$ it follows that $\psi(D_0)=\phi(D_0)_0$. Thus, the
    element $K- \phi(D_0)= \sum_{r=1}^\infty \lambda^r K_r^{(1)}$ of
    the commutant begins in order $\lambda$. Due to the
    $\mathbb{C}[[\lambda]]$-linearity of $\phi$, iteration proves the
    lemma.
\end{proof}

By the results of \cite{waldmann:2005a}, the commutant
$\End_{C^\infty(M)[[\lambda]]}(\Gamma^\infty(P\times _G V)[[\lambda]],
\bullet)$ is isomorphic to $\Gamma^\infty(\End(P\times_G
V))[[\lambda]]$ and we immediately get the following theorem.
\begin{theorem}
    \label{theorem:associated-commutant}
    Let $\bullet$ denote a deformation quantization of a principal
    fibre bundle as well as the induced deformation quantization of an
    associated vector bundle $E=P\times_G V$. Then, for all structures
    $\star'_E$ and $\bullet'_E$ as in
    \eqref{eq:deformed-bimodule-vector-bundle} there exists a
    surjective algebra homomorphism
    \begin{equation}
        \label{eq:surjection-endos}
        \phi: ((\Diffopver(P) \otimes
        \End(V))^G[[\lambda]],\starp)
        \longrightarrow 
        (\Gamma^\infty(\End(E)) [[\lambda]], \star'_E)
    \end{equation}
    such that
    \begin{equation}
        \label{eq:property-surjection-endos}
        D \bulletp s = \phi(D)\bullet'_E s
    \end{equation}
    for all $D\in (\Diffopver(P) \otimes
    \End(V))^G[[\lambda]]$ and $s\in \Gamma^\infty(E)[[\lambda]]$.
\end{theorem}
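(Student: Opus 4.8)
The plan is to derive the statement by splicing the preceding (surjectivity) Lemma together with the structure theory of deformed vector bundles from \cite{waldmann:2005a}. Write $\mathcal{K} = \End_{C^\infty(M)[[\lambda]]}(\Gamma^\infty(E)[[\lambda]], \bullet)$ for the commutant of $(C^\infty(M)[[\lambda]], \star)$ in its right action $\bullet$ on the sections of $E = P \times_G V$; by the preceding Lemma the map $\phi_0: D \mapsto (s \mapsto D \bulletp s)$ is a \emph{surjective} map from $(\Diffopver(P) \otimes \End(V))^G[[\lambda]]$ onto $\mathcal{K}$, and it is moreover an algebra homomorphism into $(\mathcal{K}, \circ)$ because $\bulletp$ is the left module action of $((\Diffopver(P) \otimes \End(V))^G[[\lambda]], \starp)$ furnished by Lemma~\ref{lemma:extended-bimodule}. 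On the other hand, \cite{waldmann:2005a}, Thm.~1, identifies $\mathcal{K}$ --- for \emph{any} admissible pair $(\star'_E, \bullet'_E)$ as in \eqref{eq:deformed-bimodule-vector-bundle} --- with $(\Gamma^\infty(\End(E))[[\lambda]], \star'_E)$ via the algebra isomorphism $\Psi: A \mapsto (s \mapsto A \bullet'_E s)$, which is bijective onto $\mathcal{K}$ precisely because the two deformed algebras are mutual commutants. The plan is then simply to put $\phi = \Psi^{-1} \circ \phi_0$.

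With this definition the three asserted properties are immediate. Surjectivity of $\phi$ follows from surjectivity of $\phi_0$ and bijectivity of $\Psi$. The relation \eqref{eq:property-surjection-endos} is nothing but $\phi(D) \bullet'_E s = \Psi(\phi(D))(s) = \phi_0(D)(s) = D \bulletp s$. Finally, for the multiplicativity one computes, for all $s$,
\begin{equation*}
  \phi_0(D \starp D')(s) = (D \starp D') \bulletp s = D \bulletp (D' \bulletp s) = \phi_0(D)\bigl(\phi_0(D')(s)\bigr),
\end{equation*}
so that $\phi_0(D \starp D') = \phi_0(D) \circ \phi_0(D')$ inside $\mathcal{K}$; applying $\Psi^{-1}$ and using that $\Psi$ carries $\star'_E$ to composition of operators yields $\phi(D \starp D') = \phi(D) \star'_E \phi(D')$, and unitality $\phi(\id \otimes \id_V) = \id_E$ follows in the same way from $1 \bulletp s = s$.

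I do not expect a genuine obstacle here: once the machinery of Theorem~\ref{theorem:CooleDeformation}, Lemma~\ref{lemma:extended-bimodule}, and the cited vector bundle results of \cite{waldmann:2005a} is in place, the argument is a short formal assembly. The one point deserving a line of care is that $\Psi$ really is a \emph{bijection} onto $\mathcal{K}$ and an algebra isomorphism onto $(\mathcal{K}, \circ)$ for an arbitrary choice of $(\star'_E, \bullet'_E)$, not merely for a distinguished one; this is exactly the uniqueness-up-to-equivalence and mutual-commutant content of \cite{waldmann:2005a}, Thm.~1, which rests in turn on the faithfulness of the $\bullet'_E$-action of $\Gamma^\infty(\End(E))[[\lambda]]$ on $\Gamma^\infty(E)[[\lambda]]$ --- itself a one-line lowest-order reduction to the faithfulness of the classical fibrewise action of $\End(E_x)$ on $E_x$.
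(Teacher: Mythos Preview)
Your proposal is correct and follows essentially the same route as the paper: the paper's argument is simply the line ``By the results of \cite{waldmann:2005a}, the commutant $\End_{C^\infty(M)[[\lambda]]}(\Gamma^\infty(P\times_G V)[[\lambda]], \bullet)$ is isomorphic to $\Gamma^\infty(\End(P\times_G V))[[\lambda]]$ and we immediately get the following theorem,'' combined with the preceding surjectivity Lemma --- precisely your $\phi = \Psi^{-1}\circ\phi_0$. Your write-up merely makes explicit the multiplicativity and the intertwining relation that the paper leaves implicit.
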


We conclude this section with a few remarks on aspects concerning
Morita theory. It is well known that classically $\Gamma^\infty(E)$
provides a Morita equivalence bimodule between the algebras
$C^\infty(M)$ and $\Gamma^\infty(\End(E))$, provided $E$ has non-zero
fibres. The corresponding deformed bimodule
\eqref{eq:deformed-bimodule-vector-bundle} is still a Morita
equivalence bimodule, see the discussions in
\cite{bursztyn.waldmann:2000b, bursztyn:2002a,
  bursztyn.waldmann:2005b}, where also a \emph{strong} version of
Morita equivalence including the $^*$-algebra aspects is discussed.

From this point of view, Theorem~\ref{theorem:associated-commutant}
says that all the Morita equivalent algebras of the form
$(\Gamma^\infty(\End(E))[[\lambda]], \star'_E)$ are obtained from
$(\Diffopver(P) \otimes \End(V))^G[[\lambda]], \star')$. Motivated by
the diagram \eqref{eq:diagramm-commutant}, one can now ask the
following question: Is $(C^\infty(P) \otimes \End(V))^G$ deformed into
a \emph{subalgebra} of $(\Diffopver(P) \otimes \End(V))^G[[\lambda]],
\star')$ such that under the map $\phi$ it becomes isomorphic to
$(\Gamma^\infty(\End(E))[[\lambda]], \star'_E)$?  Classically, this is
clearly the case, but for the deformed situation there are
obstructions: we call the principal bundle $P$ \emph{sufficiently
  non-trivial} if there exists at least one non-trivial representation
$\pi$ of $G$ on $\mathbb{C}$ such that the line bundle $L = P \times_G
\mathbb{C}$ over $M$ has non-trivial first Chern class $c_1 (L) \in
\HdR^2(M, \mathbb{C})$. Here we have to use the image of the
topologically defined Chern class in the deRham cohomology, i.e.\ 
possible torsion effects may be lost.
\begin{theorem}
    \label{theorem:MoritaObstruction}
    Assume that the principal bundle $P$ is sufficiently non-trivial
    and $\star$ is a symplectic star product. Then there exists no
    deformation $\star'_M$ of $C^\infty(M)$ with an algebra homomorphism
    \begin{equation}
        \label{eq:starPrimeMtostarPrime}
        \Phi: \left(C^\infty(M)[[\lambda]], \star'_M\right)
        \longrightarrow
        \left(\Diffopver(P)^G[[\lambda]], \star'\right)
    \end{equation}
    with $\Phi = \mathsf{p}^* + \sum_{r=1}^\infty \lambda^r \Phi_r$.
\end{theorem}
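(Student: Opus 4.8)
The plan is to play the hypothetical $\Phi$ off against the associated line bundle construction of Section~\ref{sec:AssociatedBundles} together with the Morita classification of symplectic star products: the point will be that such a $\Phi$ forces the endomorphism star product of a suitably non-trivial deformed line bundle to be \emph{equivalent} to $\star$, which is excluded by its characteristic class.

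First I would fix two one-dimensional representations of $G$ on $\mathbb{C}$: the trivial one, giving the trivial line bundle $M\times\mathbb{C}$, and -- using that $P$ is sufficiently non-trivial -- a non-trivial one $\pi$ for which $L=P\times_G\mathbb{C}$ has $c_1(L)\neq 0$ in $\HdR^2(M,\mathbb{C})$. For any one-dimensional $V=\mathbb{C}$ the conjugation action of $G$ on $\End(V)=\mathbb{C}$ is trivial, so $(\Diffopver(P)\otimes\End(V))^G=\Diffopver(P)^G$, while the endomorphism bundle of a line bundle is canonically trivial, so $\Gamma^\infty(\End(L))\cong C^\infty(M)$ and likewise for $M\times\mathbb{C}$. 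Hence Theorem~\ref{theorem:associated-commutant}, applied to these two representations, yields algebra homomorphisms
\[
    \phi_L\colon \big(\Diffopver(P)^G[[\lambda]],\starp\big)\longrightarrow\big(C^\infty(M)[[\lambda]],\star'_L\big),
    \qquad
    \phi_{\mathrm{triv}}\colon \big(\Diffopver(P)^G[[\lambda]],\starp\big)\longrightarrow\big(C^\infty(M)[[\lambda]],\star'_{\mathrm{triv}}\big),
\]
where $\star'_L$ and $\star'_{\mathrm{triv}}$ are the endomorphism star products of the induced deformed line bundles. Unwinding the defining relation $D\bulletp s=\phi_L(D)\bullet'_L\,s$ of Theorem~\ref{theorem:associated-commutant} in order $\lambda^0$ on $s\in\Gamma^\infty(L)=(C^\infty(P)\otimes\mathbb{C})^G$ shows that $\phi_L$ sends the zeroth-order multiplication operator by $\mathsf{p}^*a$ to the function $a$, and the same holds for $\phi_{\mathrm{triv}}$; thus, under the identifications above, $\phi_L=\id_{C^\infty(M)}+O(\lambda)$ and $\phi_{\mathrm{triv}}=\id_{C^\infty(M)}+O(\lambda)$.

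Now I would assume, for contradiction, that $\star'_M$ and $\Phi=\mathsf{p}^*+\sum_{r\geq 1}\lambda^r\Phi_r$ exist as in the statement. Since $\starp$ is $G$-invariant, $\Diffopver(P)^G[[\lambda]]$ is a subalgebra of $(\Diffopver(P)[[\lambda]],\starp)$, and $\Phi$ takes values in it, so $\phi_L\circ\Phi$ and $\phi_{\mathrm{triv}}\circ\Phi$ are $\mathbb{C}[[\lambda]]$-algebra homomorphisms from $(C^\infty(M)[[\lambda]],\star'_M)$ into $(C^\infty(M)[[\lambda]],\star'_L)$ and into $(C^\infty(M)[[\lambda]],\star'_{\mathrm{triv}})$, respectively. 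By the zeroth-order computation and $\Phi=\mathsf{p}^*+O(\lambda)$, both composites are of the form $\id_{C^\infty(M)}+O(\lambda)$, hence invertible order by order, hence algebra isomorphisms; and -- taking the $\Phi_r$ to be differential, as is implicit throughout, so that these composites have bidifferential coefficients -- they are equivalences of star products. Therefore $\star'_M$ is equivalent both to $\star'_L$ and to $\star'_{\mathrm{triv}}$. Since the trivial line bundle has vanishing Chern class, $\star'_{\mathrm{triv}}$ is equivalent to $\star$ itself: the deformation of the free rank-one module is unique up to equivalence, so its endomorphism algebra is equivalent to $(C^\infty(M)[[\lambda]],\star)$, cf.\ \cite{waldmann:2005a}. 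Hence $\star$ is equivalent to $\star'_L$.

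Finally I would observe that this is impossible, and this is the one place where the symplecticity of $\star$ enters essentially. By the classification of Morita equivalent symplectic star products \cite{bursztyn.waldmann:2000b,bursztyn.waldmann:2005b}, the endomorphism star product $\star'_L$ of the deformation quantization of $L$ is Morita equivalent to $\star$ and has characteristic class $c(\star'_L)=c(\star)+2\pi\I\,c_1(L)$, where $c_1(L)$ denotes the image of the Chern class in $\HdR^2(M,\mathbb{C})$ (so that only its non-torsion part matters -- which is precisely why the hypothesis is phrased in de Rham cohomology). Equivalent symplectic star products have equal characteristic class, so $\star\sim\star'_L$ would force $2\pi\I\,c_1(L)=0$, contradicting $c_1(L)\neq 0$. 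Hence no $\star'_M$ together with a homomorphism $\Phi$ as in the statement can exist. I expect the genuinely delicate steps to be the order-$\lambda^0$ identification of $\phi_L$ and $\phi_{\mathrm{triv}}$ from Theorem~\ref{theorem:associated-commutant}, and getting the sign and normalization in the characteristic-class shift correct when invoking \cite{bursztyn.waldmann:2000b,bursztyn.waldmann:2005b}; the rest is bookkeeping with formal power series.
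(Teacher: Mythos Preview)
Your argument is correct and follows essentially the same route as the paper: both proofs contrast the trivial one-dimensional representation with a non-trivial one giving $L$ with $c_1(L)\neq 0$, feed the hypothetical $\Phi$ through Theorem~\ref{theorem:associated-commutant}, and obtain a contradiction from the Chern-class shift in the characteristic class. The only cosmetic difference is that the paper phrases the key step as an induced $(\star'_M,\star)$-bimodule structure on $\Gamma^\infty(L)[[\lambda]]$ and invokes \cite{bursztyn.waldmann:2002a}, Thm.~1, directly for the relative class $t(\star'_M,\star)=2\pi\I\,c_1(L)$, whereas you first extract the equivalence $\phi_L\circ\Phi\colon\star'_M\to\star'_L$ and then use the formula $c(\star'_L)=c(\star)+2\pi\I\,c_1(L)$ --- the same Morita-theoretic fact viewed from the other side (so \cite{bursztyn.waldmann:2002a} is the sharper citation).
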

\begin{proof}
    Assume that such a star product $\star'_M$ and a corresponding
    homomorphism $\Phi$ exist. Then $(C^\infty(M)[[\lambda]],
    \star'_M)$ would act from the left on the section
    $\Gamma^\infty(E)[[\lambda]]$ for any associated bundle according
    to Theorem~\ref{theorem:associated-commutant}. Moreover, this left
    module structure deforms the classical action as all the maps
    $\Phi$ and $\phi$ in zeroth order combine to the classical left
    multiplication of sections of $E$ with functions. Applying this to
    the case of a line bundle $L$ we know from
    \cite{bursztyn.waldmann:2002a}, Thm.~1, that $\star'_M$
    necessarily quantizes the same (symplectic) Poisson bracket and
    that the \emph{relative class}, see\cite{gutt.rawnsley:1999a}, of
    $\star$ and $\star'_M$ is given by $t(\star'_M, \star) = 2 \pi \I
    c_1(L)$.  This has to be valid for \emph{all} line bundles
    obtained from association. Thus if there exists a line bundle $L$
    with non-trivial first Chern class, the star products $\star'_M$
    and $\star$ are in-equivalent.  Conversely, one always has the
    trivial representation of $G$ resulting in the trivial line bundle
    $L = M \times \mathbb{C}$, from which we conclude that the
    relative class vanishes, i.e.\ $\star'_M$ and $\star$ are
    equivalent. This is a contradiction.
\end{proof}
\begin{example}
    \label{example:HopfFibrationAgain}
    Again, the Hopf fibration $S^3 \longrightarrow S^2$ provides an
    example for a sufficiently non-trivial principal bundle. The
    $\mathrm{U}(1)$-representations $(\E^{\I\phi}, z) \mapsto
    \E^{n\I\phi} z$ for $n \in \mathbb{Z}$ are all non-isomorphic and
    yield non-isomorphic line bundles $L_n$. In fact, $L_n =
    L_1^{\otimes n}$ for $n \ge 1$ and $L_{-1} = L_1^*$. The Chern
    classes are $c_1(L_n) = n c_1(L_1) \ne 0$ for $n \ne 0$.
\end{example}

This theorem can also be seen as a refined version of the obstruction
for deforming $\mathsf{p}^*$ into an algebra homomorphism as discussed
in the introduction. In fact, Theorem~\ref{theorem:MoritaObstruction}
gives an obstruction for a \emph{bimodule} structure on
$C^\infty(P)[[\lambda]]$ with respect to possibly two different star
products $\star$ and $\star'_M$.
\begin{corollary}
    \label{corollary:NoBimodule}
    Let $\mathsf{p}: P \longrightarrow M$ be a sufficiently
    non-trivial principal bundle over a symplectic manifold $M$ and
    let $\star$ be a symplectic star product on $M$. Then there exists
    no deformation of $\mathsf{p}^*$ into a $G$-invariant bimodule
    structure with respect to $\star$.
\end{corollary}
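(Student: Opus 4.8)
The plan is to reduce the statement directly to Theorem~\ref{theorem:MoritaObstruction}: I will show that a $G$-invariant bimodule deformation of $\mathsf{p}^*$ produces precisely an algebra homomorphism of the form \eqref{eq:starPrimeMtostarPrime}, which that theorem forbids.

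First I would make the hypothesis explicit. Such a deformation is, by definition, a (differential) star product $\star'_M$ on $M$ together with a left module structure $\bullet_{\mathrm{L}}$ of $C^\infty(P)[[\lambda]]$ over $(C^\infty(M)[[\lambda]], \star'_M)$ and a right module structure $\bullet$ over $(C^\infty(M)[[\lambda]], \star)$, both given by bidifferential operators, commuting with one another (the bimodule axiom), $G$-invariant in the sense of \eqref{eq:deformation-right-module-principal} and its obvious left analogue, and with $a \bullet_{\mathrm{L}} f = (\mathsf{p}^* a)\, f = f \bullet a$ in zeroth order. In particular $\bullet$ is a $G$-invariant deformation quantization of the principal bundle $\mathsf{p}$ in the sense of Definition~\ref{definition:def-quant-principal}, so Theorem~\ref{theorem:CooleDeformation} applies to it (after fixing a principal connection and a $G$-invariant, vertical-bundle preserving, torsion-free covariant derivative): the map $\rho'$ of Proposition~\ref{proposition:Kommutante} is $G$-invariant and restricts to a $\mathbb{C}[[\lambda]]$-linear isomorphism of $\Diffopver(P)^G[[\lambda]]$ onto the $G$-invariant part of the commutant of $\bullet$ inside $\Diffop(P)[[\lambda]]$, intertwining $\star'$ with composition of operators.

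Then I would observe that for each fixed $a \in C^\infty(M)[[\lambda]]$ the map $\mathsf{L}_a : f \mapsto a \bullet_{\mathrm{L}} f$ is a differential operator on $P$ (because $\bullet_{\mathrm{L}}$ is bidifferential), it commutes with every right multiplication $\rho(b)$ by the bimodule axiom, and it is $G$-invariant; hence $\mathsf{L}_a$ lies in the $G$-invariant part of the commutant of $\bullet$. Moreover $\mathsf{L}_{a \star'_M b} = \mathsf{L}_a \circ \mathsf{L}_b$ by the left module property, so $a \mapsto \mathsf{L}_a$ is an algebra homomorphism from $(C^\infty(M)[[\lambda]], \star'_M)$ into that commutant. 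Composing with $(\rho')^{-1}$ yields an algebra homomorphism
\[
\Phi := (\rho')^{-1} \circ \bigl(a \mapsto \mathsf{L}_a\bigr):
\bigl(C^\infty(M)[[\lambda]], \star'_M\bigr)
\longrightarrow
\bigl(\Diffopver(P)^G[[\lambda]], \star'\bigr),
\]
and since $\rho' = \id + O(\lambda)$ while $\mathsf{L}_a$ equals, in zeroth order, multiplication by $\mathsf{p}^* a$, which is the element $\mathsf{p}^* a \in C^\infty(P) = \Diffopver^0(P)$, one gets $\Phi = \mathsf{p}^* + \sum_{r \ge 1}\lambda^r \Phi_r$. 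This is exactly a homomorphism of the form \eqref{eq:starPrimeMtostarPrime}.

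Since $P$ is sufficiently non-trivial and $\star$ is symplectic, Theorem~\ref{theorem:MoritaObstruction} rules out the existence of such a $\Phi$, and this contradiction proves the corollary. The only point that requires genuine care is the translation in the third paragraph: that the bimodule and $G$-invariance axioms say exactly that $\mathsf{L}_a$ lies in the $G$-invariant commutant of $\bullet$, so that $(\rho')^{-1}$ is applicable. Everything of substance --- the vanishing Hochschild cohomologies underlying Theorem~\ref{theorem:CooleDeformation} and the relative-class / Chern-class obstruction underlying Theorem~\ref{theorem:MoritaObstruction} --- is already available.
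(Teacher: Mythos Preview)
Your proposal is correct and follows essentially the same route as the paper's proof: the right module structure is a $G$-invariant deformation quantization of the principal bundle, the left module structure gives an algebra homomorphism into the commutant, $G$-invariance forces the image into $\Diffopver(P)^G[[\lambda]]$, and then Theorem~\ref{theorem:MoritaObstruction} yields the contradiction. Your version is simply more explicit about why $\mathsf{L}_a$ lands in the $G$-invariant commutant and why $(\rho')^{-1}$ applies, which is exactly the one point the paper's terse proof leaves to the reader.
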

\begin{proof}
    Indeed, such a bimodule structure would first give a $G$-invariant
    right module structure which is unique up to equivalence. Then the
    left module structure gives an algebra homomorphism into the
    deformed vertical differential operators which is $G$-equivariant.
    Thus the image is in $(\Diffopver (P)^G[[\lambda]],
    \star')$ which is not possible by
    Theorem~\ref{theorem:MoritaObstruction}.
\end{proof}


%
%

\begin{footnotesize}

\end{footnotesize}

\end{document}